\newcommand*\samethanks[1][\value{footnote}]{\footnotemark[#1]}
\title{Nonlinear approximation spaces for inverse problems}
\author{Albert Cohen\thanks{Laboratoire Jacques-Louis Lions, Sorbonne Universit\'e, 
4 place Jussieu, 75005 Paris, France (albert.cohen@sorbonne-universite.fr, matthieu.dolbeault@sorbonne-universite.fr and agustin.somacal@sorbonne-universite.fr)}, Matthieu Dolbeault\samethanks[1], Olga Mula\thanks{Technical University of Eindhoven, Netherlands
(o.mula@tue.nl)}, Agustin Somacal\samethanks[1]}
\date{\today}
\newcommand\eps{\varepsilon}
\newcommand\e{\varepsilon}
\newcommand\dist{\text{dist}}
\newcommand\diff{\text{diff}}
\renewcommand\t{\tilde}
\newcommand\<{\langle}
\renewcommand\>{\rangle}
\renewcommand\leq{\leqslant}
\renewcommand\geq{\geqslant}
\def\cM{{\cal M}}
\def\cN{{\cal N}}
\def\cP{{\cal P}}
\def\cT{{\cal T}}
\def\cF{{\cal F}}
\def\cC{{\cal C}}
\def\cK{{\cal K}}
\def\o{\overline}
\def\nl{\newline}
\def\vp{\varphi}
\def\({\left (}
\def\){\right )}
\newcommand{\R}{\mathbb R}
\newcommand{\N}{\mathbb N}
\newcommand{\Pp}{\mathbb P}
\def\Chi{\raise .3ex\hbox{\large $\chi$}}
\newcommand{\be}{\begin{equation}}
\newcommand{\ee}{\end{equation}}
\newcommand{\iref}[1]{{\rm (\ref{#1})}}
\newtheorem{theorem}{Theorem}[section]
\newtheorem{proposition}[theorem]{Proposition}
\newtheorem{lemma}[theorem]{Lemma}
\newtheorem{remark}[theorem]{Remark}
\newtheorem{definition}[theorem]{Definition}
\DeclareMathOperator{\argmin}{argmin}
\begin{document}

\maketitle

\begin{abstract}
This paper is concerned with the ubiquitous inverse problem of recovering an
unknown function $u$ from finitely many measurements 
possibly affected by noise. In recent years, inversion methods based
on {\it linear} approximation spaces were introduced in
\cite{BCDDPW2017,MPPY2015} with certified recovery bounds. 
It is however known that linear spaces become ineffective 
for approximating simple and relevant families of functions, such as piecewise smooth 
functions that typically occur in hyperbolic PDEs (shocks) or images (edges).
For such families, {\it nonlinear} spaces \cite{Devore1998} are known to significantly improve
the approximation performance. The first contribution of this paper is to provide with certified recovery 
bounds for inversion procedures based on nonlinear approximation spaces. The second contribution 
is the application of this framework
to the recovery of general bidimensional shapes from cell-average data. 
We also discuss how the application of our results to $n$-term approximation relates to classical results in compressed sensing.
\end{abstract}

\section{Introduction}

\subsection{The recovery problem}
In this paper, we treat the following state estimation problem in a general Banach space $V$. We want to recover an approximation to an unknown function $u\in V$ from data given by $m$ observations
\be
\label{measurements}
z_i \coloneqq \ell_i(u) + \eta_i, \quad i=1,\dots, m,
\ee
where $\ell_i:V \mapsto \R$ are known measurement functionals, and $\eta_i$ is additive noise. The functionals $\ell_i$ often correspond to the response of a physical measurement device but they can have a different interpretation depending on the application. Their behavior can be linear (in which case the $\ell_i$ are linear functionals from $V'$, the dual of $V$) or nonlinear.
This type of recovery problem is clearly ill-posed when the dimension of $V$ exceeds $m$. It arises ubiquitously in sampling 
and inverse problem applications where $V$ is infinite dimensional (to name a few, see \cite{AHP2013, ABCGMM2018, GLM2022, HG2010}). 

One natural strategy to address this difficulty is to search for a 
recovery of $u$ by an element of a low-dimensional reconstruction space $V_n\subset V$. The space $V_n$ could be either an $n$-dimensional 
linear subspace, or more generally a nonlinear approximation space parametrized by $n$ degrees of freedom, with $n\leq m$.

In order to obtain quantitative results for such recovery procedures, it is necessary to possess additional information about $u$, 
usually as an assumption that $u$ belongs to a certain model class $\cK$ contained in $V$. 
The approximation space $V_n$ is chosen in order to collectively approximate the elements of $\cK$ as well as possible, in the sense that
\[
\dist(\cK, V_n)_V \coloneqq \max_{u\in \cK} \min_{v\in V_n} ||u - v||_V
\]
is as small as possible for moderate values of $n$.

Numerous theoretical results and numerical algorithms have been proposed in several fields to study and solve the above recovery problem (below we recall some relevant results). However, to the best of our knowledge, they all involve at least one or several of the following assumptions:
\begin{itemize}
\item The $\ell_i$ are \emph{linear} functionals,
\item $V_n$ is a \emph{linear} (or affine) subspace of $V$,
\item $V$ is a \emph{Hilbert} space,
\item The model class $\cK$ is a \emph{ball in a smoothness space}, e.g., a unit ball in Lipschitz, Sobolev, or Besov spaces. Results involving this type of model classes have been intensively studied in the field of optimal recovery (see \cite{Bojanov1994, MR1977, NW2008}).
\end{itemize}
The goal of this paper is to develop and analyze inversion procedures that do not require any of the above assumptions. Our analysis and numerical algorithms can thus be applied to virtually any recovery problem. The starting point of our development is based on algorithms introduced for inverse state estimation using reduced order models of parametrized Partial Differential Equations (PDEs). We next recall the specific framework. The presentation will also serve to explain more in depth the motivations leading to propose the present generalization.

\subsection{State estimation with reduced models for parametrized PDE's}

A relevant scenario in inverse state estimation is when the model class $\cK$ is given by the set of solutions to some parameter-dependent PDE of the general form
\be
\label{pde}
\cP(u,y)=0,
\ee
where $\cP$ is a differential operator, $y$ a vector of parameters ranging in some domain $Y$ in $\R^d$,
and $u$ is the solution. If well-posedness holds in some Banach space $V$ for each $y\in Y$, we denote by $u(y)\in V$ the corresponding solution for the given parameter value $y$ and by
\[
\cM\coloneqq\{u(y) \;: \; y\in Y\},
\]
the {\it solution manifold}.

In inverse state estimation, we take $\cK=\cM$ for the model class so the unknown $u$ to recover belongs to $\cM$. However, the parameter $y$ that satisfies $u=u(y)$ is unknown, so we cannot solve the forward problem \eqref{pde} to approximate $u$. Instead, we must approximate $u$ from the partial observational data \eqref{measurements}, and the knowledge of the model class $\cK=\cM$.

For the manifold $\cM$, efficient approximation spaces $V_n$ are usually obtained by reduced modelling techniques. In their most simple format,
reduced models consist into linear spaces 
$(V_n)_{n\geq 0}$ with $\dim(V_n)=n$.
The ideal benchmark in this linear approximation setting is provided by the Kolmogorov $n$-width 
\[
d_n(\cM)_V\coloneqq\inf_{\dim(V_n)\leq n} {\rm dist}(\cM,V_n)_V,
\]
which describes the optimal approximation performance achievable by an $n$-dimensional space over the set $\cM$. 

Apart from very simplified cases, the space $V_n$ achieving the above infimum is usually out of reach. Practical model reduction techniques such as polynomial approximation in the parametrized domain \cite{CD2015acta,CDS2011,TWZ2017} or reduced bases \cite{EPR2010, HRS2015, MS2013, RHP2007, ZKA2019} construct spaces $V_n$ that are ``suboptimal yet good''. In particular, the reduced basis method, which generates $V_n$ by a specific selection of particular solution instances $u^1,\dots,u^n\in \cM$, has been
proved to have approximation error ${\rm dist}(\cM,V_n)_V$ that decays with the same polynomial or exponential
rates 
as $d_n(\cM)_V$, and in that sense are close to optimal \cite{DPW2013}.

\subsection{The PBDW method}

We take the {\it Parametrized Background Data Weak} (PBDW)
method as a starting point for our analysis. The PBDW method, first introduced in \cite{MPPY2015},
as well as several extensions, has  been the object of a series of works  \cite{BCDDPW2017, BCMN2018, CDDFMN2020, CDMN2022} on its optimality properties as a recovery algorithm.
It has also been used for different practical applications, see \cite{ABCGMM2018, GLM2022, HCBM2019}. We refer to \cite{Mula2022} for an overview of the state of the art on this approach, and its connections with different fields. For our current purposes, it will suffice to recall the first version of the algorithm, which is the goal of this section.

The PBDW method uses a linear approximation space $V_n$
of dimension $n\leq m$. Usually this space is a reduced model in applications. 
It is assumed that the $\ell_i$ are continuous linear functionals, that
is $\ell_i\in V'$, and that $V$ is a Hilbert space. Then, introducing the Riesz representers
$\omega_i\in V$ such that $\ell_i(v)=\<\omega_i,v\>_V$, the data of the noise-free observation
\[
\ell(u)\coloneqq(\ell_1(u),\dots,\ell_m(u)),
\]
is equivalent to that of the orthogonal projection $w=P_W u$ on the {\it Riesz measurement space}
\[
W\coloneqq{\rm span}\{\omega_1,\dots,\omega_m\}.
\] 
Assuming linear independence of the $\ell_i$, this space has dimension $m$. A critical
quantity is the number
\be
\mu=\mu(V_n,W)\coloneqq\max_{v\in V_n} \frac{\|v\|_V}{\|P_W v\|_V},
\label{mu}
\ee
that describes the ``stability'' of the description of an element of $V_n$ by
its projection onto $W$, and may be thought of as the inverse 
cosine of the angle between $W$ and $V_n$. In particular, this quantity
is finite only when $n\leq m$. It can be explicitly computed as the inverse of
the smallest singular value of a cross-grammian matrix between 
orthonormal bases of $V_n$ and $W$ (see \cite{BCDDPW2017, Mula2022}).

The PBDW method consists in solving the minimization problem
\[
\min_{v\in V_w}\min_{\t v\in V_n} \|v-\t v\|_V,
\]
where $V_w\coloneqq w+ W^\perp$ is the set of all states $v$ such that $P_W v=w$.
We denote by $(u^*,\t u)\in V_w\times V_n$ the minimizing pair,
which is unique when $\mu<\infty$, and can be computed by solving 
an $n\times n$ linear system.
The function $\t u$ may be seen as
a particular best-fit estimator of $u$ on $V_n$, since it is also defined by 
\[
\t u\coloneqq{\rm argmin} \{\|P_W v-w\|_V\;: \; v\in V_n\}.
\]
The function $u^*$ can be derived from $\t u$ by the correction procedure
\[
u^*\coloneqq\t u+ (w-P_W \t u),
\]
which shows that $u^*\in V_n+W$. It may be thought of as a generalized interpolation estimator, since it agrees
with the observed data ($P_W u^* = P_W u$). In the case of noise-free data, it is proved
in \cite{BCDDPW2017, MPPY2015} that these estimators satisfy the
recovery bounds
\[
\|u-\t u\|_V \leq \mu \min_{v\in V_n} \|u-v\|_V \quad{\rm and} \quad
\|u-u^*\|_V \leq \mu \min_{v\in V_n\oplus (V_n^\perp \cap W)} \|u-v\|_V.
\]
These bounds reflect a typical trade-off in the choice of the reduced basis space, since making $n$ larger
has both effect of decreasing the approximation error $\min_{v\in V_n} \|u-v\|_V $ and increasing 
the stability constant $\mu=\mu(V_n,W)$. 

When the PBDW method is applied to noisy data, amounting in 
observing a perturbed version $\o w$ of $w=P_W u$, the recovery bounds remain valid up
to the additional term $\mu \|w-\o w\|_V$. In summary, one has for both estimators
\be
\label{genest}
\max\{\|u-\t u\|_V, \|u-u^*\|_V \} \leq \mu(e_n(u)+\kappa),
\ee
where
\[
e_n(u)\coloneqq\min_{v\in V_n} \|u-v\|_V
\]
is the reduced model approximation error and $\kappa\coloneqq\|w-\o w\|_V$
is the noise error measured in the space $W$. Note that since the additive perturbations $\eta_i$ are
applied to the data $\ell_i(u)$, a natural model for the measurement noise is to assume a bound 
of the form
\be
\|\eta\|_p \leq \e,
\label{noisemodel}
\ee
for the vector $\eta=(\eta_1,\dots,\eta_m)$, typically in the max norm $p=\infty$ or euclidean norm $p=2$. Therefore,
one has $\kappa\leq \beta \e$, where
\[
\beta\coloneqq\max_{v\in W}\frac {\|v\|_V}{\|\ell(v)\|_p},
\]
resulting in a bound of the form $\mu e_n(u)+\mu\beta \e$ for both estimators.

\subsection{Towards nonlinear approximation spaces}

The simplicity of the PBDW method and its variants comes together with a 
fundamental limitation on its performance: it is by essence a linear reconstruction method
with recovery bounds tied to the approximation error $e_n(u)$. When the only prior information
is that the unknown function $u$ belongs to a class $\cK$, with $\cK=\cM$ the solution manifold in 
the case of parametric PDEs, its best performance over $\cK$ is thus limited by the $n$-width
$d_n(\cK)_V$ and in turn by $d_m(\cK)_V$ since $n\leq m$.

In several simple yet relevant settings, it is known that $n$-widths have poor decay with $n$.
One instance is when the class $\cK$ contains piecewise smooth states, with 
a state-dependent location of jump discontinuities.
As an elementary example, one can
easily check that if $V=L^2([0,1])$ and $\cK$ is the set all indicator functions $u=\Chi_{[a,b]}$
with $a,b\in [0,1]$, one has $d_n(\cK)_V \sim n^{-1/2}$. This decay is of course even slower
for more general classes of piecewise smooth function in higher dimension, see in particular \cite[Chapter 3, equation (3.76)]{BCOW2017}. 
Such functions are typical in parametrized hyperbolic PDEs, due to the presence of shocks with positions that 
differ when parameters entering the velocity vary. We refer to \cite{BBEELM2022, BCMN2018, ELMV2020, GK2019, OR2016, Welper2015} 
for other examples of parametric PDEs whose solution manifold has slow Kolmogorov $n$-width decay.

For such classes of functions, nonlinear approximation methods are well known to perform significantly
better than their linear counterparts. Typical representatives of such methods include approximation by rational fractions, 
free knot splines or adaptive finite elements, best $n$-term approximation in a basis or dictionary, 
neural network or various tensor formats. In these instances the space $V_n$ still depends on
$n$ or ${\cal O}(n)$ parameters but is not anymore a linear
space. We refer to \cite{Devore1998} for a general introduction on the topic of nonlinear approximation.

\subsection{Objective and outline}

The objective of this paper is to study the natural extensions of the PBDW method
to such nonlinear approximation spaces and identify the basic structural properties
that lead to near optimal recovery estimates similar to \iref{genest}. 

We begin in \S \ref{section2} by considering the most general setting where $V$ is a Banach space,
$V_n$ a nonlinear approximation family, and the $\ell_i$ are functionals defined on $V$
that are not necessarily linear, but Lipschitz continuous, that is
\be
\|\ell(v)-\ell(\t v)\|_Z\leq \alpha_Z\|v-\t v\|_V, \quad v,\t v\in V.
\label{stab1}
\ee
Here $\|\cdot\|_Z$ can be any given norm defined over $\R^m$ with the constant $\alpha_Z$
depending on this choice of norm. In this framework, we discuss the best-fit estimation procedure
that consists in minimizing the distance to the observed data in a given norm $\|\cdot\|_Z$.

Our main structural assumption on $V_n$ is the following {\it inverse stability property}:
the reduced model is stable with respect to the measurement functionals
if there exists a finite constant $\mu_Z$ such that 
\be
\|v-\t v\|_V \leq \mu_Z\|\ell(v)-\ell(\t v)\|_Z, \quad v,\t v\in V_n.
\label{invstab}
\ee
The stability constant $\mu_Z$ depends on the $Z$ norm 
and plays a role similar to that of $\mu$ in the linear case. In particular, we show 
that this constant is finite only if $n\leq m$. The resulting estimator $\t u$ is 
then proved to satisfy a general recovery bound of the form
\[
\|u-\t u\|_V \leq C_1e_n(u)+C_2\|\eta\|_p,
\]
where $e_n(u)\coloneqq\min_{v\in V_n} \|u-v\|_V$ is the nonlinear 
reduced model approximation error, $\|\eta\|_p$ the level of measurement noise
in $\ell^p$ norm, and the constants $C_1$ and $C_2$ depend on $\alpha_Z$ and $\mu_Z$.

In \S \ref{section3}, we consider the more particular setting where the $\ell_i$ are linear functionals.
Then, we show that constants $C_1$ and $C_2$
are each minimized by a different choice of norm $\|\cdot\|_Z$, resulting in two different
best fit estimators $\t u$, as already observed in \cite{BGM2017} in the case of linear reduced
models. This particular setting also allows us to introduce
a generalized interpolation estimator $u^*$ and establish similar recovery estimates
for $\|u-u^*\|_V$.

We next apply our framework to the inverse problem that consists in 
recovering a general shape $\Omega$, identified to its characteristic function $\Chi_\Omega$,
based on cell average data 
\[
a_T(\Omega)\coloneqq\frac 1{|T|}\int_{T} \Chi_\Omega,\quad T\in\cT,
\]
where $\cT$ is a fixed cartesian mesh. One motivation for this problem is the 
design of finite volume schemes for the computation of solutions to transport PDEs on 
such meshes. 

We first discuss in \S \ref{section4} the best estimation rate in terms of the mesh size $h$
that can be achieved by standard linear reconstructions,
and which is essentially that of piecewise constant approximations, that is
${\cal O}(h^{1/q})$ regardless of the smoothness of the boundary $\partial\Omega$.
This intrinsic limitation is due to the presence of the jump discontinuity 
that is not well resolved by the mesh.

We then discuss in \S \ref{section5} a local recovery strategy based 
on a nonlinear approximation space $V_n$ that consists
of characteristic functions of half-planes which can fit the boundary of $\Omega$ at 
a subcell resolution level, as already proposed in \cite{ACDD2005, ELVIRA, ELVIRAexactness, LVIRA}.
One main result, whose proof is given in an appendix, is that 
this approximation space is stable in the sense of \iref{invstab} 
with respect to cell average measurements on a stencil of $3\times 3$ squares.
In turn, if $\Omega$ has a $C^2$ boundary, the recovered shape $\t \Omega$ is proved to satisfy an estimate of the form
\[
\|\Chi_\Omega-\Chi_{\t \Omega}\|_{L^q} \leq Ch^{2/q},
\]
where $h$ is the mesh size, which cannot be achieved by any linear reconstruction. This paves the way to
higher order reconstruction methods for smoother boundaries by using local nonlinear approximation
spaces with curved boundaries and larger stencils.

Finally, we discuss in \S \ref{section6} the application of our results to the recovery of  large vectors of size $N$ from $m<N$ linear measurements, up to the error of best $n$-term approximation. This problem is well-known in compressed sensing \cite{CTR,FR}, and was in particular studied in \cite{CDD2009} which discusses the importance  of the recovery norm $\|\cdot\|_V$ to understand if near-optimal recovery bounds can be achieved with $m$ not much larger than $n$. We show that the structural assumptions identified in our general setting are naturally related to the so-called {\it null space property} introduced in \cite{CDD2009}.

\section{Nonlinear reduction of inverse problems}
\label{section2}

\subsection{A general framework}

In full generality we are interested in recovering functions $u$ in
a general Banach space $V$ with norm $\|\cdot\|_V$, from the measurement vector $z=(z_1,\dots,z_m)\in\R^m$
given by \iref{measurements}. A recovery (or inversion) map
\[
z \to R(z),
\]
takes this vector to an approximation $R(z)$ of $u$. We are interested in controlling the recovery error $\|u-R(z)\|_V$.

To build the recovery map $R$, we use a nonlinear approximation space of dimension $n$ is a family of functions that can be described by 
$n$ parameters. Loosely speaking, this means that there exists a set $S\subset \R^n$ and a 
continuous map $\vp: S\to V$ such that 
\[
V_n\coloneqq\{\vp(x)\;: \; x\in S\}.
\]
Note that this definition covers the case of an $n$ dimensional linear subspace since we can choose $S=\R^n$ and $\vp$ a linear map. 

Our main assumptions are the Lipschitz stability of the functionals $\ell_i$ over the whole space $V$
and their inverse Lipschitz stability over the nonlinear approximation space $V_n$, expressed by \iref{stab1}
and \iref{invstab}, respectively. Note that since $\R^m$ is finite dimensional, the norm $\|\cdot\|_Z$
that is chosen in $\R^m$ to express these properties could be arbitrary up to a modification 
of the stability constants $\alpha_Z,\mu_Z$. These constants can be optimally defined as
\[
\alpha_Z=\sup_{v_1,v_2\in V}\frac{\|\ell(v_1)-\ell(v_2)\|_Z}{\|v_1-v_2\|_V},
\]
and
\[
\mu_Z=\sup_{v_1,v_2\in V_n}\frac{\|v_1-v_2\|_V}{\|\ell(v_1)-\ell(v_2)\|_Z}.
\]
Note that one always has $\alpha_Z\mu_Z\geq 1$.

\begin{remark}
Note that when $V_n$ is an $n$-dimensional space and the $\ell_i$ are linear functionals,
the quantity $\mu_Z$ may be rewritten as
\[
\mu_Z=\max_{v\in V_n}\frac{\|v\|_V}{\|\ell(v)\|_Z}.
\]
As discussed further,
the quantity $\mu$ defined in \iref{mu} for the analysis of the PBDW method is
an instance of $\mu_Z$ corresponding to a particular choice of norm $\|\cdot\|_Z$.
Assuming the $\ell_i$ are independent functionals, one 
easily checks that finiteness of this quantity imposes that $n\leq m$. Indeed, if $n>m$, there exists
a non-trivial $v\in V_n\cap \cN$, where
\[
\cN\coloneqq\{v\;:\; \ell(v)=0\}
\]
is the null space of the measurement map that has codimension $m$, and therefore $\mu_Z$
is infinite. 
\end{remark}

\begin{remark}
The restriction $n\leq m$ is also needed for nonlinear spaces $V_n$ and 
measurement $\ell$, under assumptions expressing
that $m$ and $n$ are local dimensions. More precisely, assume that
the map $\vp$ defining $V_n$ is differentiable at some $x_0$ in the interior of $S$, that
$\ell$ is differentiable at $v_0=\vp(x_0)$, and that both tangent maps
have full rank at these points, that is,
\[
{\rm dim}(d\vp_{x_0}(\R^n))=n\quad{\rm and}\quad {\rm dim}(d\ell_{v_0}(V))=m.
\]
Then, by taking $v_1=v_0$ and $v_2=\vp(x_0+tx)$ in the quotient that defines $\mu_Z$,
and letting $t\to 0$ for arbitrary $x\in \R^n$, one finds that
\[
\mu_Z \geq  \max_{v\in d\vp_{x_0}(\R^n)}\frac{\|v\|_V}{\|d\ell_{v_0}(v)\|_Z},
\]
and therefore it is infinite if $n\leq m$, by the same argument as in the previous remark.
\end{remark}

\subsection{The best fit estimator}

We define a first recovery map $z\mapsto \t u=R(z)$ as the best fit estimator
in the $Z$ norm
\be
\label{best-fit-est}
\t u\coloneqq{\rm argmin}\{ \|z-\ell (v)\|_Z \;: \; v\in V_n\}.
\ee
The existence of such a minimizer is trivial if the space $V_n$ and the
measurement map $\ell$ are linear. It can also be ensured in the nonlinear case under additional
assumptions, for example compactness of the set $S$ defining the nonlinear space $V_n$,
which will be the case in the application to shape recovery discussed in \S \ref{section5}.
If the minimizer does not exist, we may consider a near minimizer, that is $\t u\in V_n$ satisfying
\[
\|z-\ell (\t u)\|_Z\leq C \|z-\ell (v)\|_Z, \quad v\in V_n,
\] 
for some fixed $C>1$. Inspection of the proofs of our main results below reveals that similar recovery
bounds can be obtained for such a near minimizer, up to the multiplicative constant $C$.

Recall that our assumption \eqref{noisemodel} on the noise model is a control on $\|\eta\|_p$ 
for some $1\leq p\leq \infty$. For this value of $p$, we introduce the quantity 
\[
\beta_Z\coloneqq \max_{z\in \R^m}\frac{\|z\|_Z}{\|z\|_p}
\]
We are now in position to state a recovery bound in this general framework.

\begin{theorem}
\label{theobestfit}
The best fit estimator $\t u$ from \eqref{best-fit-est} satisfies the estimate
\be
\|u-\t u\|_V \leq C_1e_n(u)+C_2\|\eta\|_p,
\label{bestfitbound}
\ee
where $C_1\coloneqq1+2\alpha_Z\mu_Z$ and $C_2\coloneqq2\beta_Z\mu_Z$. 
\end{theorem}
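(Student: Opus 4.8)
The plan is to bound the recovery error by a triangle-inequality argument that inserts a best approximation $v^*\in V_n$ of $u$ between $u$ and $\t u$, and then controls each piece using the two stability constants $\alpha_Z$, $\mu_Z$ together with the optimality property defining $\t u$. Concretely, let $v^*\in V_n$ be such that $\|u-v^*\|_V=e_n(u)$ (or a near-minimizer if the infimum is not attained). Writing
\[
\|u-\t u\|_V \leq \|u-v^*\|_V+\|v^*-\t u\|_V = e_n(u)+\|v^*-\t u\|_V,
\]
the whole task reduces to bounding $\|v^*-\t u\|_V$, where both $v^*$ and $\t u$ lie in $V_n$.

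**The key step** is that since $v^*,\t u\in V_n$, the inverse stability property \iref{invstab} applies and gives
\[
\|v^*-\t u\|_V \leq \mu_Z\,\|\ell(v^*)-\ell(\t u)\|_Z.
\]
I would then estimate $\|\ell(v^*)-\ell(\t u)\|_Z$ by routing through the data vector $z$:
\[
\|\ell(v^*)-\ell(\t u)\|_Z \leq \|\ell(v^*)-z\|_Z+\|z-\ell(\t u)\|_Z.
\]
The second term is the value of the objective minimized by $\t u$, so by the definition \eqref{best-fit-est} of the best fit estimator it is no larger than $\|z-\ell(v^*)\|_Z$; hence
\[
\|\ell(v^*)-\ell(\t u)\|_Z \leq 2\,\|z-\ell(v^*)\|_Z.
\]

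**It then remains** to bound $\|z-\ell(v^*)\|_Z$ in terms of the approximation error and the noise. Using $z=\ell(u)+\eta$ from \iref{measurements} and splitting off the noise,
\[
\|z-\ell(v^*)\|_Z \leq \|\ell(u)-\ell(v^*)\|_Z+\|\eta\|_Z.
\]
For the first term I would invoke the forward Lipschitz stability \iref{stab1}, giving $\|\ell(u)-\ell(v^*)\|_Z\leq \alpha_Z\|u-v^*\|_V=\alpha_Z e_n(u)$; for the second term, the definition of $\beta_Z$ yields $\|\eta\|_Z\leq \beta_Z\|\eta\|_p$. Chaining these estimates produces $\|\ell(v^*)-\ell(\t u)\|_Z\leq 2(\alpha_Z e_n(u)+\beta_Z\|\eta\|_p)$, so that $\|v^*-\t u\|_V\leq 2\mu_Z\alpha_Z e_n(u)+2\mu_Z\beta_Z\|\eta\|_p$. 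Adding back the $e_n(u)$ term from the opening triangle inequality gives exactly the claimed constants $C_1=1+2\alpha_Z\mu_Z$ and $C_2=2\beta_Z\mu_Z$.

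**I do not expect any genuine obstacle here:** the argument is a purely structural chain of triangle inequalities, and each inequality is licensed by precisely one of the three ingredients (\iref{stab1}, \iref{invstab}, and the optimality of $\t u$), with $\beta_Z$ handling the conversion between the $Z$ norm and the $\ell^p$ noise bound. The only point requiring a word of care is the existence of the minimizer defining $\t u$; if it fails, the same computation goes through for a near-minimizer at the cost of an extra factor $C$ in the bound on $\|z-\ell(\t u)\|_Z$, as the authors already note in the surrounding text.
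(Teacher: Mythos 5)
Your proof is correct and follows essentially the same route as the paper's: the same triangle-inequality decomposition, the inverse stability bound \iref{invstab} on the difference in $V_n$, the near-optimality of $\t u$ routed through the data $z$, and the forward stability \iref{stab1} plus $\beta_Z$ to absorb the noise. The only cosmetic difference is that you fix a (near-)best approximation $v^*$ at the outset, while the paper argues for an arbitrary $v\in V_n$ and optimizes at the end --- which sidesteps even the attainment question you flag.
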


\noindent
{\bf Proof:} Consider any $v\in V_n$ and write
\[
\|u-\t u\|_V \leq \|u-v\|_V +\|v-\t u\|_V \leq \|u-v\|_V +\mu_Z\|\ell(v)-\ell(\t u)\|_Z,
\]
where we have used \iref{invstab}. On the other hand, the minimizing
property of $\t u$ ensures that 
\[
\|\ell(v)-\ell(\t u)\|_Z\leq \|z-\ell(v)\|_Z+ \|z-\ell(\t u)\|_Z\leq 2 \|z-\ell(v)\|_Z.
\]
Furthermore, using the stability \iref{stab1} of $\ell$ and the definition of $\beta_Z$, we have
\[
\|z-\ell(v)\|_Z \leq \|\ell(v)-\ell(u)\|_Z+  \|\eta\|_Z  \leq \alpha_Z \|u-v\|+\beta_Z\|\eta\|_p.
\]
Combining the three estimates, we reach
\[
 \|u-\t u\|_V\leq (1+2\alpha_Z\mu_Z) \|u-v\|_V+ 2\beta_Z\mu_Z \|\eta\|_p,
 \]
 which gives \iref{bestfitbound} by optimizing over $v\in V_n$.
 \hfill $\Box$
 \newline
 
The constants $C_1$ and $C_2$ in the above recovery estimate depend on the
choice of norm $\|\cdot\|_Z$. Note that they are invariant when this norm is scaled
by a factor $t>0$, since this has the effect of multiplying $\alpha_Z$ and $\beta_Z$
by $t$ and dividing $\mu_Z$ by $t$, which is consistant with the fact that the 
resulting estimator $\t u$ is left unchanged by such a scaling. In the next section
we show, in the particular setting of linear measurements,
that specific choices of $\|\cdot\|_Z$ can be used to minimize $C_1$ or $C_2$. 
This setting also allows us to introduce and study a generalized interpolation estimator,
which is not relevant to the present section since the nonlinear measurement map $\ell$ 
is not assumed to be surjective: in the presence of noise, there 
might exist no $v\in V$ that agrees with the data, in the sense that $z=\ell(u)+\eta$ does not
belong to the range of $\ell$.

\section{Linear observations}
\label{section3}

In this section, we assume that the $\ell_i\in V'$
are independent linear functionals, still allowing $V_n$ to be a general nonlinear space.
In this framework, which contains the example of shape recovery discussed in \S \ref{section5}, one has 
\[
\alpha_Z=\max_{v\in V}\frac{\|\ell(v)\|_Z}{\|v\|_V}
\]
and
\[
\mu_Z=\max_{v\in V_n^\diff}\frac{\|v\|_V}{\|\ell(v)\|_Z}, 
\]
where
\[
V_n^\diff=V_n-V_n\coloneqq\{v_1-v_2\;:\; v_1,v_2\in V_n\}.
\]
In this particular setting, we can identify the norms $\|\cdot\|_Z$ that minimize
the constants $C_1\coloneqq1+2\alpha_Z\mu_Z$ and $C_2\coloneqq2\beta_Z\mu_Z$, respectively.

\subsection{Optimal norms}

As $\ell:V\to \R^m$ is continuous and surjective, we can define a norm on $\R^m$ through
\be
\|z\|_W=\min\{\|v\|_V \;: \; \ell(v)=z\}.
\label{riesznorm}
\ee

\begin{remark}
If $V$ is a Hilbert space, the minimizer is unique by strict convexity of $\|\cdot\|_V$, and the $m$-dimensional space
\[
W:=\{\argmin_{\ell(v)=z}\|v\|_V, \; z\in \R^m \}
\]
is exactly the span of the Riesz representers of the observation functionals $\ell_i\in V'$. Moreover, denoting $P_W$ the orthogonal projection on $W$, we have 
\[
\|\ell(v)\|_W=\|P_Wv\|_V,\quad v\in V.
\]
For this reason, we sometimes refer to $\|\cdot\|_W$ as the {\em Riesz norm} even in the case
of a more general Banach space.
\end{remark}

The following result shows that the choice $\|\cdot\|_Z\coloneqq\|\cdot\|_W$ is the one that minimizes the
constant $C_1$, while $C_2$ is minimized by simply taking the $\ell^p$ norm $\|\cdot\|_Z=\|\cdot\|_p$.

\begin{theorem}
\label{theonorm}
For any norm $\|\cdot\|_Z$, one has
\[
\alpha_W\mu_W =\mu_W\leq \alpha_Z \mu_Z,
\]
and 
\[
\beta_p\mu_p=\mu_p \leq \beta_Z\mu_Z,
\]
where $(\alpha_W,\beta_W,\mu_W)$ and $(\alpha_p,\beta_p,\mu_p)$ are
the triplets $(\alpha_Z,\beta_Z,\mu_Z)$ when $\|\cdot\|_Z\coloneqq\|\cdot\|_W$ and $\|\cdot\|_Z=\|\cdot\|_p$,
respectively.
\end{theorem}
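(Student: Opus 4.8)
The plan is to reduce the whole statement to two elementary norm-comparison inequalities on $\R^m$, after first checking that $\alpha_W=1$ and $\beta_p=1$, so that the two displayed identities $\alpha_W\mu_W=\mu_W$ and $\beta_p\mu_p=\mu_p$ become automatic. For $\beta_p$ this is immediate, since by definition $\beta_p=\max_z \|z\|_p/\|z\|_p=1$. For $\alpha_W$, I would use that in the definition $\|z\|_W=\min\{\|v\|_V:\ell(v)=z\}$ the vector $v$ itself is an admissible competitor for $z=\ell(v)$, giving $\|\ell(v)\|_W\le\|v\|_V$ and hence $\alpha_W\le 1$; taking $v$ to be a (near-)minimizer realizing $\|z\|_W$ for some $z\neq 0$ shows the ratio $\|\ell(v)\|_W/\|v\|_V$ can be made arbitrarily close to $1$, so $\alpha_W=1$.

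The crux is then the observation that $\|\cdot\|_W$ and $\|\cdot\|_p$ sit at the two extremes controlled by $\alpha_Z$ and $\beta_Z$. Concretely, for any $z\in\R^m$ and any $v\in V$ with $\ell(v)=z$, the definition of $\alpha_Z$ gives $\|z\|_Z=\|\ell(v)\|_Z\le\alpha_Z\|v\|_V$; minimizing over such $v$ yields
\[
\|z\|_Z\le\alpha_Z\,\|z\|_W,\qquad z\in\R^m.
\]
Symmetrically, the definition of $\beta_Z$ directly gives $\|z\|_Z\le\beta_Z\|z\|_p$ for all $z$. These are the only two inputs needed.

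It then remains to transport these inequalities through the definition of $\mu$. Since $\mu_Z=\max_{v\in V_n^\diff}\|v\|_V/\|\ell(v)\|_Z$, the first comparison rearranges to $\|\ell(v)\|_W\ge\alpha_Z^{-1}\|\ell(v)\|_Z$, so that $\|v\|_V/\|\ell(v)\|_W\le\alpha_Z\,\|v\|_V/\|\ell(v)\|_Z$ for each $v\in V_n^\diff$; taking the maximum yields $\mu_W\le\alpha_Z\mu_Z$. The same manipulation with $\|\ell(v)\|_p\ge\beta_Z^{-1}\|\ell(v)\|_Z$ gives $\mu_p\le\beta_Z\mu_Z$. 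Combined with $\alpha_W=1$ and $\beta_p=1$, this is exactly the claimed pair of inequalities.

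There is no serious obstacle in the estimates themselves; the only point requiring care is the existence of a minimizer in \eqref{riesznorm}, which underlies both $\alpha_W=1$ and the first comparison inequality. Existence is ensured here by the surjectivity of $\ell$ onto the finite-dimensional $\R^m$, which makes $\{v:\ell(v)=z\}$ a nonempty closed affine set over which $\|\cdot\|_V$ can be minimized; alternatively, working with near-minimizers and letting the tolerance tend to $0$ avoids the existence question altogether while producing the same inequalities.
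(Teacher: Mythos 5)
Your proof is correct and follows essentially the same route as the paper's: establish $\alpha_W=1$ and $\beta_p=1$, then use the norm comparisons $\|z\|_Z\leq\alpha_Z\|z\|_W$ and $\|z\|_Z\leq\beta_Z\|z\|_p$ on $\R^m$ to transport the bound through the definition of $\mu$, exactly as the paper does via its chain of maxima. One small caveat: in a general (non-reflexive) Banach space the infimum in \eqref{riesznorm} need not be attained on the closed affine set $\{v:\ell(v)=z\}$, so your near-minimizer fallback --- rather than the claimed existence of an exact minimizer --- is the argument to rely on; it yields the same inequalities unchanged.
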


\noindent
{\bf Proof:} One has
\[
\alpha_W=\max_{v\in V}\frac{\|\ell(v)\|_W}{\|v\|_V}=\max_{z\in \R^m}\max_{\ell(v)=z}\frac{\|z\|_W}{\|v\|_V}=1,
\]
and so
\[
\alpha_W\mu_W=\mu_W=\max_{v\in V_n^\diff}\frac{\|v\|_V}{\|\ell(v)\|_W}
\leq 
\max_{v\in V_n^\diff}\frac{\|\ell(v)\|_Z}{\|\ell(v)\|_W}\max_{v\in V_n^\diff}\frac{\|v\|_V}{\|\ell(v)\|_Z} 
=\max_{v\in V_n^\diff}\frac{\|\ell(v)\|_Z}{\|\ell(v)\|_W}\mu_Z.
\]
We now observe that from the definition of $W$, one has 
\[
\max_{v\in V_n^\diff}\frac{\|\ell(v)\|_Z}{\|\ell(v)\|_W}\leq
\max_{z\in \R^m}\frac{\|z\|_Z}{\|z\|_W}
=\max_{z\in \R^m} \max_{\ell(v)=z}\frac{\|z\|_Z}{\|v\|_V}=\alpha_Z.
\]
We have thus obtained the first claim $\alpha_W\mu_W =\mu_W\leq \alpha_Z \mu_Z$.
For the second claim, note that we trivially have $\beta_p=1$, and so
\[
\beta_p\mu_p=\mu_p=\max_{v\in V_n^\diff}\frac{\|v\|_V}{\|\ell(v)\|_p}
\leq \max_{v\in V_n^\diff}\frac{\|\ell(v)\|_Z}{\|\ell(v)\|_p}\max_{v\in V_n^\diff}\frac{\|v\|_V}{\|\ell(v)\|_Z}
\leq \beta_Z\mu_Z.
\]
\hfill $\Box$

\begin{remark}
In the particular case where $V$ is a Hilbert space, $V_n$ a linear subspace and $p=2$, 
it was already observed in \cite{BGM2017} that the reconstruction operators
based on the choice $\|\cdot\|_Z=\|\cdot\|_W$ or $\|\cdot\|_Z=\|\cdot\|_2$
are the most stable with respect to the
approximation error and the noise error, respectively. The above result may thus be
seen as a generalization of this state of affairs to the case of nonlinear 
subspaces of Banach spaces, and $\ell^p$ noise.
\end{remark}

\subsection{The generalized interpolation estimator}

Thanks to the surjectivity of $\ell$,
we may introduce the space
\[
V_z := \{v\in V\;: \; \ell(v)=z\},
\]
and consider the minimization problem
\[
\min_{v\in V_z}\min_{\t v\in V_n} \|v-\t v\|_V.
\]
If $(u^*,\t u)\in V_z\times V_n$ is a minimizing pair, the function $u^*$ is given by
\[
u^*=u^*(z)\in {\rm argmin}\{ {\rm dist}(v,V_n)_V \;: \; \ell (v)=z\},
\]
and is called the generalized interpolation estimator, since it exactly matches the data.

\begin{remark} 
The best fit and generalized interpolation estimation
may be thought of as the two extreme cases, $t\to \infty$
and $t\to 0$, of the penalized estimator
\[
u_t\coloneqq{\rm argmin}\{\|z-\ell (v)\|_Z+t\,{\rm dist}(v,V_n)_V\}.
\]
As explained earlier, the generalized interpolation operator 
may not be well defined in the general case where 
the $\ell_i$ are nonlinear. As opposed to the best fit, 
or the above penalized estimator $u_t$ when $t>0$, 
the generalized interpolation estimator does not involve the choice of a 
particular norm $Z$. 
\end{remark}

On the other hand, we see that $\t u$ is the solution to the problem
\[
\min_{\t v\in V_n} {\rm dist}(\t v,V_z)_V.
\]
Observing that
\[
{\rm dist}(\t v,V_z)_V=\min_{\ell(v)=z}\|\t v-v\|_V=\min_{\ell(v')=\ell(\t v)-z}\|v'\|_V=\|\ell(\t v)-z\|_W,
\]
we thus find that $\t u$ is precisely the best fit estimator for the Riesz norm $\|\cdot\|_Z\coloneqq\|\cdot\|_W$.
\newline

In the Hilbert space setting, the generalized interpolation estimator $u^*$ is therefore the orthogonal 
projection of this particular best fit estimator $\t u$ onto
the affine space $V_z$. It may thus also be derived from $\t u$ by the correction
procedure
\[
u^*=\t u + w-P_W\t u,
\]
where $w=\argmin_{\ell(v)=z}\|v\|_V\in W$ is the preimage by $\ell$ of the measurements $z$.
In the noiseless case when $w=P_W u$, this correction can only improve the approximation
since it reduces the component of $u-\t u$ in the $W$ direction while leaving unchanged
the orthogonal component, and so, in view of Theorems \ref{theobestfit} and \ref{theonorm}, we are ensured that
\[
\|u-u^*\|_V \leq C_1e_n(u),
\]
where $C_1\coloneqq 1+2\mu_W$.
\newline

More generally, in the noisy case, and without the assumption that $V$ is a Hilbert space, there is no guarantee that $u^*$ performs better than $\t u$, but we still obtain
an error estimate on $u^*$ that is similar in nature to that satisfied by $\t u$.

\begin{theorem}
The generalized interpolation estimator $u^*$ satisfies the estimate
\be
\|u-u^*\|_V \leq C_1e_n(u)+C_2\|\eta\|_p,
\label{genintbound}
\ee
where $C_1\coloneqq 2+2\mu_W$ and $C_2\coloneqq (1+2\mu_W)\beta_W$. 
\end{theorem}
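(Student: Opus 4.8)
The plan is to route the estimate through the best fit estimator $\t u$ for the Riesz norm $\|\cdot\|_W$, which the discussion preceding the statement has already identified as the second component of the minimizing pair $(u^*,\t u)$. Writing the triangle inequality
\[
\|u-u^*\|_V \leq \|u-\t u\|_V + \|\t u-u^*\|_V,
\]
I would control the two terms separately. The first term is immediate: applying Theorem \ref{theobestfit} with the choice $\|\cdot\|_Z=\|\cdot\|_W$, and recalling from the proof of Theorem \ref{theonorm} that $\alpha_W=1$, gives $\|u-\t u\|_V \leq (1+2\mu_W)e_n(u)+2\beta_W\mu_W\|\eta\|_p$. So the real work is only to bound the correction term $\|\t u-u^*\|_V$.

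For that second term I would invoke the two structural facts established just above the statement. First, since $(u^*,\t u)$ minimizes $\min_{v\in V_z}\min_{\t v\in V_n}\|v-\t v\|_V$ and the inner minimization over $V_z$ for fixed $\t u$ is exactly $\dist(\t u,V_z)_V$, one has $\|\t u-u^*\|_V=\dist(\t u,V_z)_V$. Second, the identity $\dist(\t v,V_z)_V=\|\ell(\t v)-z\|_W$ (the very computation that identifies $\t u$ as the Riesz-norm best fit) turns this into $\|\t u-u^*\|_V=\|\ell(\t u)-z\|_W$. Thus the correction term is precisely the Riesz-norm residual of $\t u$, and it remains to estimate $\|\ell(\t u)-z\|_W$.

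To bound that residual I would use the defining minimality of $\t u$ in the Riesz norm: for every $v\in V_n$,
\[
\|\ell(\t u)-z\|_W \leq \|\ell(v)-z\|_W \leq \|\ell(v)-\ell(u)\|_W+\|\eta\|_W \leq \|u-v\|_V+\beta_W\|\eta\|_p,
\]
using $z-\ell(u)=\eta$, the stability bound \iref{stab1} in the $W$ norm together with $\alpha_W=1$, and the definition of $\beta_W$; optimizing over $v\in V_n$ yields $\|\ell(\t u)-z\|_W\leq e_n(u)+\beta_W\|\eta\|_p$. Adding this to the first bound gives
\[
\|u-u^*\|_V \leq (1+2\mu_W)e_n(u)+2\beta_W\mu_W\|\eta\|_p+e_n(u)+\beta_W\|\eta\|_p=(2+2\mu_W)e_n(u)+(1+2\mu_W)\beta_W\|\eta\|_p,
\]
which is exactly \iref{genintbound}. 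I do not anticipate a genuine obstacle here: the one conceptual step is recognizing that the correction $\|\t u-u^*\|_V$ coincides with the Riesz residual $\|\ell(\t u)-z\|_W$, after which everything reduces to two triangle inequalities plus the already-proven best fit estimate. The only point demanding care is the constant bookkeeping, in particular consistent use of $\alpha_W=1$ and of the noise conversion $\|\eta\|_W\leq\beta_W\|\eta\|_p$.
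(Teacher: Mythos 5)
Your proof is correct, and it takes a genuinely different route from the paper's. The paper never passes through $\t u$: it picks arbitrary $v,v^*\in V_n$, writes the three-term decomposition $\|u-u^*\|_V\leq \|u-v\|_V+\|v-v^*\|_V+\|v^*-u^*\|_V$, controls the middle term by the inverse stability \iref{invstab} with $Z=W$ (using $\|\ell(u-u^*)\|_W=\|\eta\|_W$), and controls the last term by the defining property of $u^*$ as a minimizer of ${\rm dist}(\cdot,V_n)_V$ over $V_z$, comparing against the competitor $u+\delta\in V_z$ where $\delta\in\argmin_{\ell(v)=\eta}\|v\|_V$ lifts the noise. You instead split off the best fit estimator, $\|u-u^*\|_V\leq\|u-\t u\|_V+\|\t u-u^*\|_V$, reuse Theorem \ref{theobestfit} with $Z=W$ (and $\alpha_W=1$) for the first term, and observe that the correction term equals the Riesz residual, $\|\t u-u^*\|_V={\rm dist}(\t u,V_z)_V=\|\ell(\t u)-z\|_W$, which minimality of $\t u$ bounds by $e_n(u)+\beta_W\|\eta\|_p$; the constants come out identical, $C_1=2+2\mu_W$ and $C_2=(1+2\mu_W)\beta_W$. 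Your argument is more modular --- it recycles the already-proven best fit bound and makes transparent that $u^*$ pays at most the residual of $\t u$ on top of the best fit error --- and it avoids introducing the lifted noise representer $\delta$ (hence does not need attainment of $\min_{\ell(v)=\eta}\|v\|_V$, only the infimum identity behind $\|\cdot\|_W$). The price is that it leans on two facts from the preceding discussion, namely that the second component of the minimizing pair is exactly the $W$-norm best fit and that ${\rm dist}(\t v,V_z)_V=\|\ell(\t v)-z\|_W$, both of which use linearity of $\ell$; the paper's proof is self-contained modulo the characterization of $u^*$, and its only nontrivial ingredient is the inverse stability assumption itself. Both proofs, like the paper's, implicitly assume the minimizing pair $(u^*,\t u)$ exists; in its absence each degrades gracefully to near-minimizers with adjusted constants.
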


\noindent
{\bf Proof:} 
Take $\delta\in \argmin_{\ell(v)=\eta}\|v\|_V$, so that $\ell(\delta)=\eta$ and $\|\eta\|_W=\|\delta\|_V$.
For $v$ and $v^*$ in $V_n$, decompose
\be
\label{decomposition u-u*}
\|u-u^*\|_V\leq \|u-v\|_V +\|v-v^*\|_V +\|v^*-u^*\|_V.
\ee
For the middle term, using \iref{invstab}, we write

\begin{align*}
\|v-v^*\|_V&\leq \mu_W\|\ell(v-v^*)\|_W\\
&\leq \mu_W(\|\ell(v-u)\|_W+\|\ell(u-u^*)\|_W+\|\ell(u^*-v^*)\|_W)\\
&\leq \mu_W (\|v-u\|_V+\|\eta\|_W+\|u^*-v^*\|_V)
\end{align*}
since $\alpha_W=1$, so the decomposition \iref{decomposition u-u*} becomes
\[
\|u-u^*\|_V\leq (1+\mu_W)\|u-v\|_V +\mu_W\|\eta\|_W +(1+\mu_W)\|v^*-u^*\|_V.
\]
To bound the last term, we optimize over the choice of $v^*\in V_n$ and use the definition of $u^*$ to obtain
\[
\inf_{v^*\in V_n}\|v^*-u^*\|_V={\rm dist}(u^*,V_n)\leq {\rm dist}(u+\delta,V_n)\leq {\rm dist}(u,V_n)+\|\delta\|_V=e_n(u)+\|\eta\|_W
\]
since $\ell(u+\delta)=\ell(u)+\eta=z$. Combining the last two estimates and optimizing over $v\in V_n$ gives
\[
\|u-u^*\|_V\leq (2+2\mu_W)e_n(u) +(1+2\mu_W)\|\eta\|_W,
\]
and the result follows from the definition of $\beta_W$.
\hfill $\Box$

\section{Shape recovery from cell averages}
\label{section4}

\subsection{The shape recovery problem}

The problem of reconstructing a function $u$ from
its cell averages
\[
a_T(u)\coloneqq\frac 1 {|T|} \int_T u, \quad T\in \cT,
\]
where $\cT$ is a partition of the domain $D\subset \R^d$ in which $u$ is defined,
appears naturally in two areas:
\begin{itemize}
\item
In $2d$ or $3d$ image processing, it corresponds
to the so-called super-resolution problem, that is, reconstructing a high
resolution image from its low resolution version defined on the coarse
grid $\cT$ of pixels or voxels.
\item
In numerical simulation of hyperbolic conservation laws, it plays
a central role when developing finite volume schemes on the 
computation mesh $\cT$.
\end{itemize}

Standard reconstruction methods are challenged when the function 
$u$ exhibits jump discontinuities which are not well resolved
by the partition $\cT$. Such discontinuities correspond to edges in 
image processing or shocks in conservation laws.
Here we may focus on the very simple case of characteristic 
functions of sets 
\[
u=\Chi_\Omega,
\]
that already carry the main difficulty. Therefore we are facing a problem of 
reconstructing a shape $\Omega$ from local averages of $\Chi_\Omega$.

As a simple example we work in the domain $D=[0,1]^2$ with a uniform grid
based on square cells of sidelength $h=\frac 1 L$ for some $L>1$,
therefore of the form
\[
\cT=\cT_h\coloneqq\{T_{i,j}=[(i-1)h,ih]\times [(j-1)h,jh]\;: \; i,j=1,\dots,L\}.
\]
The cardinality of the grid is therefore 
\[
n:=\#(\cT)=L^2=h^{-2}.
\]

We consider classes of characteristic functions $\Chi_\Omega$
of sets $\Omega\subset D$ with boundary of a prescribed
H\"older smoothness. The definition
of these classes requires some precision.

\begin{definition}
For $s\geq 1$, $0<R<1/2$ and $M>0$, we define the class 
$\cF_{s,R,M}$ as consisting of all characteristic
functions $\Chi_\Omega$ of domains $\Omega\subset [R,1-R]^2 \subset D$
with the following property: for all $x\in D$ there exists an orthonormal system $(e_1,e_2)$ 
and a function $\psi\in \cC^s$ with $\|\psi\|_{\cC^s}\leq M$, such that
\[
y\in \Omega \iff z_2\leq \psi(z_1),
\]
for any $y=x+z_1e_1+z_2e_2$ with $|z_1|,|z_2|\leq R$.
\label{def F alpha}
\end{definition}

Here, we have used the usual definition 
\[
\|\psi\|_{\cC^s}=\sup_{0\leq k\leq \lfloor s\rfloor} \|\psi^{(k)}\|_{L^\infty([-R,R])}
+\sup_{s,t\in [-R,R]} |s-t|^{ \lfloor s\rfloor-s} \Big |\psi^{(\lfloor s\rfloor)}(s)-\psi^{(\lfloor s\rfloor)}(t)\Big |,
\]
for the H\"older norm. In the case of integer smoothness, we use the convention that $\cC^s$
denotes functions with Lipschitz derivatives up to order $s-1$, so that in particular
the case $s=1$ corresponds to domains with Lipschitz boundaries.

\begin{remark}
The condition $\Omega\subset [R,1-R]^2$ imposing that $\Omega$ remains away from the boundary $\partial D$ might be quite restrictive in some applications; instead, one can assume that the domains $\Omega$ and $D$ are periodic, or symmetrize $\Omega$ with respect to $\partial D$.
\end{remark}

\subsection{The failure of linear reconstruction methods}

The most trivial linear reconstruction method consists in the piecewise constant approximation
\be
\t u=\sum_{T\in \cT} a_T(u) \Chi_T.
\label{pw cst on each cell}
\ee
The approximation rate of this reconstruction over the class $\cF_{s,R,M}$
is as follows.

\begin{proposition}
Let $u=\Chi_\Omega\in \cF_{s,R,M}$, its piecewise constant approximation $\t u$ by average values on each cell, defined in \iref{pw cst on each cell}, satisfies
\[
\|\Chi_\Omega-\t u\|_{L^q}\leq Ch^{\frac 1 q}=Cn^{-\frac 1 {2q}}, 
\]
where the constant $C$ depends on $R$ and $M$.
\label{linear upper bound}
\end{proposition}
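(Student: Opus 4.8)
The plan is to exploit the fact that the piecewise constant reconstruction is \emph{exact} on every cell that does not meet the boundary $\partial\Omega$, so that the whole $L^q$ error concentrates on the thin layer of cells straddling $\partial\Omega$. First I would partition $\cT$ according to the position of each cell relative to $\Omega$. If $T\subset\Omega$, then $a_T(u)=1$ and $\t u=\Chi_\Omega=1$ on $T$, while if $T\cap\Omega=\emptyset$, then $a_T(u)=0$ and $\t u=\Chi_\Omega=0$ on $T$; in both cases $\Chi_\Omega-\t u$ vanishes identically on $T$. Hence only the boundary cells
\[
\cB\coloneqq\{T\in\cT\;:\; T\cap\partial\Omega\neq\emptyset\}
\]
contribute to the error, and on each of them the crude bound $|\Chi_\Omega-\t u|\leq 1$ (valid since $a_T(u)\in[0,1]$ and $\Chi_\Omega\in\{0,1\}$) yields $\int_T|\Chi_\Omega-\t u|^q\leq |T|=h^2$.

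The crux of the argument is then to bound $\#\cB$ by $C(R,M)h^{-1}$. Here the smoothness hypothesis $\Chi_\Omega\in\cF_{s,R,M}$ with $s\geq 1$ enters: around every point $\partial\Omega$ is the graph of a function $\psi$ with $\|\psi\|_{\cC^s}\leq M$, in particular a Lipschitz curve whose slope is controlled by $M$. Covering $D$, and hence $\partial\Omega$, by $O(R^{-2})$ windows of size $R$ and bounding the length of the graph in each window by $2R\sqrt{1+M^2}$, I obtain that $\partial\Omega$ is rectifiable with total length $\Lambda\leq C(R,M)$. A rectifiable curve of length $\Lambda$ meets at most $C\Lambda/h=C(R,M)h^{-1}$ cells of the uniform $h$-grid, for instance by subdividing it into $\lceil\Lambda/h\rceil$ arcs of diameter at most $h$ and noting that each such arc meets only a bounded number of cells. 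Alternatively, and staying closer to Definition \ref{def F alpha}, over each grid column of width $h$ the Lipschitz bound forces $\psi$ to vary by at most $Mh$, so the graph meets at most $M+2$ cells in that column, and summing over the $O(h^{-1})$ columns inside each of the $O(R^{-2})$ windows gives the same count.

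Combining the two steps,
\[
\|\Chi_\Omega-\t u\|_{L^q}^q=\sum_{T\in\cB}\int_T|\Chi_\Omega-\t u|^q\leq \#\cB\cdot h^2\leq C(R,M)\,h,
\]
whence $\|\Chi_\Omega-\t u\|_{L^q}\leq C h^{1/q}$, and the identity $n=h^{-2}$ turns this into $Cn^{-1/(2q)}$. The pointwise bound and the final summation are routine; the main obstacle is the geometric counting of boundary cells, where one must be careful to extract constants depending only on $R$ and $M$ and not on the particular shape $\Omega$. The local graph representation built into $\cF_{s,R,M}$ is precisely what makes this uniformity available, since it controls both the total length of $\partial\Omega$ and its local slope independently of $\Omega$.
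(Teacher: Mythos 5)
Your proposal is correct and follows essentially the same route as the paper's proof: the error is supported on the cells meeting $\partial\Omega$, whose number is bounded by $C(R,M)h^{-1}$ via the local graph representation from Definition \ref{def F alpha} (covering $D$ by $O(R^{-2})$ windows of size comparable to $R$, bounding the arc length in each by $2R\sqrt{1+M^2}$, and counting a bounded number of cells per length-$h$ arc), so that $\|\Chi_\Omega-\t u\|_{L^q}^q\leq \#\cB\cdot h^2\leq C(R,M)h$. The only cosmetic difference is that the paper separately notes the trivial bound $\|\Chi_\Omega-\t u\|_{L^q}^q\leq 1$ to cover $h>R$, whereas in your version the additive constants from the ceilings are absorbed using $h\leq 1$, which is equally fine.
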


\noindent
{\bf Proof:} Let $N=\lceil (\sqrt 2 R)^{-1}\rceil$, and partition the domain $D=[0,1]^2$ into $N^2$ squares of side $1/N$. Then each subsquare $Q$ is contained in the set $\{x+z_1e_1+z_2e_2, |z_1|,|z_2|\leq R\}$ from Definition \ref{def F alpha}, where $x$ is the center of $Q$. Thus $\partial\Omega$ is the restriction of the graph of an $M$-Lipschitz function on $Q$, so its arc length is bounded by
\[
|\partial\Omega\cap Q|\leq {\rm diam}(Q)\sqrt{1+M^2}\leq 2R\sqrt{1+M^2}.
\] 
As any curve of arclength $h$ intersects at most four cells from $\cT$, $\partial\Omega\cap Q$ intersects at most $4\lceil 2R\sqrt{1+M^2}/h\rceil$ cells, and summing over all subsquares, $\partial\Omega$ intersects at most $4N^2\lceil 2R\sqrt{1+M^2}/h\rceil$ cells. Denoting $\cT_{\partial\Omega}$ the set of these cells, and observing that $u|_{T}\equiv a_T(u)\in\{0,1\}$ for $T\notin\cT_{\partial\Omega}$, we get
\[
\|\Chi_\Omega-\t u\|_{L^q}^q=\sum_{T\in \cT}\int_T|u-a_T(u)|^q\leq \sum_{T\in \cT_{\partial\Omega}}|T|= h^2|\cT_{\partial\Omega}|\leq 24\frac{\sqrt{1+M^2}}{R}h 
\]
for $h\leq R$, and this bound also holds for $h>R$ since $\|\Chi_\Omega-\t u\|_{L^q}^q\leq 1$.
\hfill $\Box$
\newline

The next result shows, for the particular case $q=2$, that no better rate can actually 
be achieved by any linear method, regardless of the smoothness $s$ of the boundary. 
We conjecture that a similar result holds for $1\leq q\leq \infty$.
This motivates the use of nonlinear recovery methods, 
which are the object of the next section. 

We recall that the Kolmogorov $n$-width of a 
compact set $S$ from some Banach space $V$ is defined by
\[
d_n(S)_V:={\rm inf}_{\dim(E)\leq n}{\rm dist}(S,E)_V,  
\]
where ${\rm dist}(S,E)_V:=\max_{u\in S}\min_{v\in E} \|u-v\|_V$ and the infimum is taken over all finite dimensional spaces $E$ of dimension at most $n$.

\begin{proposition} Let $s\geq 1$ be arbitrary. Then for $R$ sufficiently small, and $M$ sufficiently large,
there exists $c>0$ such that 
the Kolmogorov $n$-widths of the class $\cF_{s,R,M}$ satisfy
\[
d_n(\cF_{s,R,M})_{L^2} \geq cn^{-\frac 1 {4}}, \quad n\geq 1.
\]
\end{proposition}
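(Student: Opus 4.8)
The plan is to prove the bound by exhibiting an explicit hard subfamily $\mathcal{G}\subset\cF_{s,R,M}$ and estimating $d_n(\mathcal{G})_{L^2}$ from below, using $d_n(\cF_{s,R,M})\geq d_n(\mathcal{G})$. The natural device is to represent each competitor as the subgraph of its boundary: if $\Omega=\{x_2<\psi(x_1)\}$ in suitable coordinates, then for two such domains $\|\Chi_\Omega-\Chi_{\Omega'}\|_{L^2}^2=\|\psi-\psi'\|_{L^1}$, so the question becomes the $L^2$-width of the image of a $C^s$-ball under the "square-root'' (snowflake) embedding $\psi\mapsto\Chi_\psi$. The constraint $\|\psi\|_{C^s}\leq M$ ties the amplitude of a boundary oscillation of horizontal scale $\delta$ to roughly $M\delta^s$; taking $R$ small and $M$ large is precisely what buys enough boundary length (up to $\sim M/R$, as in the upper bound Proposition) and curvature budget to place many essentially independent oscillations while remaining in the class.

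Since $V=L^2$ is a Hilbert space, the cleanest available lower bound is the spectral (average-case) one: for any probability measure $\mu$ on $\mathcal{G}$ and any $n$-dimensional $E_n$, one has $\dist(\mathcal{G},E_n)_{L^2}^2\geq\int\dist(f,E_n)^2\,d\mu\geq\sum_{j>n}\lambda_j(\Sigma_\mu)$, where $\Sigma_\mu=\int f\otimes f\,d\mu$ is the second-moment operator; hence $d_n(\mathcal{G})^2\geq\sum_{j>n}\lambda_j(\Sigma_\mu)$. I would take $\mu$ to be a random smooth shape, its boundary carrying random independent oscillations at a scale $\delta$ to be optimized, compute the induced covariance of $\Chi_\Omega$, and bound the tail of its spectrum. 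A worst-case alternative is a Borsuk antipodality argument: build a continuous odd map $\xi\mapsto g(\xi)$ from $S^{K-1}$ (with $K=n+1$) into $\mathcal{G}$ and invoke $d_n(\mathcal{G})\geq\tfrac12\min_{\xi}\|g(\xi)-g(-\xi)\|_{L^2}$, for instance with $g(\xi)$ the subgraph of $\Delta\,\mathrm{sgn}\big(\sum_k\xi_k\phi_k\big)$, chosen so that antipodal symmetric differences remain of order one.

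The genuinely hard point is obtaining the sharp exponent $1/4$ rather than the easy exponent $1/2$. Both standard tools fall short on their own. The covariance of any smooth-boundary family is dominated by the jump of $\Chi_\Omega$ across $\partial\Omega$, whose per-feature spectrum decays like $j^{-2}$; summed over features this forces a global decay $\sim r^{-2}$, whose tail $\sum_{r>n}$ is $\sim n^{-1}$, so the spectral bound saturates at $d_n\gtrsim n^{-1/2}$, and the smoothness only accelerates the decay (one finds $\sim n^{-s/2}$ after optimizing the scale). The Borsuk route suffers an antipodal bottleneck — near the poles of the sphere only a few oscillations are active — and once the $\sim n$ sign changes are smoothed to respect $C^s$ the amplitude is capped, again yielding only $\sim n^{-s/2}$. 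Reaching $n^{-1/4}$, which is independent of $s$ and matches the piecewise-constant upper bound, therefore seems to require a genuinely worst-case, non-oblivious argument: for each fixed $E_n$ one must produce a smooth shape adapted to the directions $E_n$ resolves poorly.

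The mechanism I would try to exploit is that a boundary of length $\sim 1$ meets $\sim\sqrt n=h^{-1}$ cells of the natural grid ($h=n^{-1/2}$), and a single $n$-dimensional space cannot have sub-cell resolution along such a curve for every orientation simultaneously; routing a $C^s$ curve (using the length and curvature budget furnished by $R$ small and $M$ large) through $\sim\sqrt n$ poorly resolved cells, each forced to contribute $\gtrsim h^2=n^{-1}$ to the error, would give a total $\gtrsim n^{-1/2}$ and hence $d_n\gtrsim n^{-1/4}$. I expect the main obstacle to be making this counting rigorous. A naive per-cell dimension count fails because the basis functions of $E_n$ can be spread over the whole domain, so the only quantity one controls is the trace budget $\sum_T \mathrm{tr}(P_T P_{E_n} P_T)=\mathrm{tr}(P_{E_n})=n$, not the per-cell rank that actually governs the residual; this is exactly why easily resolvable single-orientation families — such as a pencil of horizontal level lines $\{\Chi_{\{x_2<\tau\}}\}$, whose width is only $n^{-1/2}$ because a tensor space resolves it cheaply — must be excluded. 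The construction must thus use orientation freedom so that no globally spread, low-dimensional space can resolve the boundary across all the cells it visits, and turning this heuristic into a valid lower bound on $\dist(f|_T,P_TE_n)$ summed over an adversarially routed curve is the crux of the proof.
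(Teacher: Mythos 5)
Your proposal does not reach a proof, and you say so yourself: the two standard tools you analyze (the spectral/average-case bound via the covariance operator, and the Borsuk antipodality argument) are correctly diagnosed as capping out at exponents no better than $n^{-1/2}$ or $n^{-s/2}$, and the replacement you suggest --- adversarially routing a $C^s$ curve through $\sim\sqrt n$ poorly resolved cells, each charged $\gtrsim h^2$ --- is left entirely heuristic. The obstruction you identify is real and is exactly why the naive argument fails: an $n$-dimensional space $E_n$ has no per-cell rank constraint, only the global trace budget $\sum_T \mathrm{tr}(P_T P_{E_n} P_T)=n$, so one cannot force a fixed error $\gtrsim h^2$ on each of $\sqrt n$ prescribed cells. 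Turning this into a lower bound is precisely the crux you concede, so the proposal has a genuine gap at its central step.

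What the paper does instead sidesteps the adversarial construction entirely, and the key idea is one your sketch circles around without naming: the hard family is the dictionary of \emph{all} half-plane indicators crossing a fixed ball $B$, i.e.\ the ridge functions $\Pp_0^2$ (ReLU$^0$ with all orientations $\omega$ and offsets $b$), and sharp width lower bounds for such dictionaries are a known theorem. Two ingredients close your gap. First, since $u\mapsto\dist(u,E)_{L^2}$ is convex, the width of a set equals the width of its closed symmetrized convex hull, so $d_n(\Pp_0^2)_{L^2(B)}=d_n(B_1(\Pp_0^2))_{L^2(B)}$; the lower bound $d_n(B_1(\Pp_k^d))_{L^2(B)}\geq c\,n^{-(2k+1)/(2d)}$ of Siegel--Xu (improving Makovoz) with $k=0$, $d=2$ then gives exactly $c\,n^{-1/4}$ --- this is where the rigorous version of your ``$\sqrt n$ cells times $h^2$ each'' counting lives, encoded in convex-hull/packing estimates rather than per-cell rank bounds. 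Second, a routine extension--restriction argument transfers the bound to the class: for $R$ small and $M$ large, every $g\in\Pp_0^2$ on $B$ extends to some $\Chi_\Omega\in\cF_{s,R,M}$, and the restriction to $B$ of any $n$-dimensional subspace of $L^2(D)$ has dimension at most $n$, whence $d_n(\cF_{s,R,M})_{L^2(D)}\geq d_n(\Pp_0^2)_{L^2(B)}$. Note also that this confirms your structural intuition that orientation freedom is essential (a single-orientation pencil of half-planes is cheap to resolve), and that the exponent $1/4$ is independent of $s$ because the extension only needs the boundary to be straight inside $B$; but without the reduction to the ridge-function width theorem, your outline cannot be completed as written.
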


\noindent
{\bf Proof:} The proof of this result relies on similar lower bounds
for dictionaries of $d$-dimensional ridge functions
\[
\Pp_k^d:=\{x\mapsto \sigma_k(\omega\cdot x+b) \; : \; \|\omega\|_2=1, \; c_1\leq b\leq c_2\}
\]
where $\sigma_k(t):=\max\{0,t\}^k$ is the so-called RELU-$k$ function. Here, we work in
the space $L^2(B)$ where $B$ is an arbitrary ball of $\R^d$, and the constants $(c_1,c_2)$ are taken as
the inf and sup of $\omega\cdot x$ as $x\in B$ and $\|\omega\|_2=1$, respectively, that is 
we take all $b$ such that the line discontinuity of the $k$-th derivative of $ \sigma_k(\omega\cdot x+b)$ crosses the ball $B$.
Theorem 9 from \cite{SX2021}, which improves on earlier results from \cite{Makovoz1996}, shows that if 
\[
B_1(\Pp_k^d) := \overline{ \Big\{\sum_{j=1}^n a_j g_j \; : \; n\in\N, \; g_j\in\Pp_k^d, \; \sum_{j=1}^n |a_j|\leq 1\Big\} }
\]
denotes the symmetrized convex hull of this dictionary (the closure being taken in $L^2(B)$), then
\[
d_n(B_1(\Pp_k^d) )_{L^2(B)} \geq c n^{-\frac {2k+1}{2d}}, \quad n\geq 1,
\]
where $c$ depends on $k$, $d$, and the diameter of $B$. 

In our case of interest we work with the value $d=2$ and $k=0$, so that the ridge functions are
simply the characteristic functions of half-planes. By convexity, we have
\[
d_n(\Pp_0^2) )_{L^2(B)}= d_n(B_1(\Pp_0^2) )_{L^2(B)} \geq c n^{-\frac 1 4}.
\]
We take for $B$ the ball of center $(1/2,1/2)$ and radius $1/4$, which is inside our domain
$D=[0,1]^2$. It is then readily seen that for $R$ small enough and $M$ large enough, we can
extend any ridge function $g\in \Pp_0^2$ into a characteristic function
$\Chi_\Omega$ from $\cF_{s,R,M}$, as illustrated in Figure \ref{ext}.

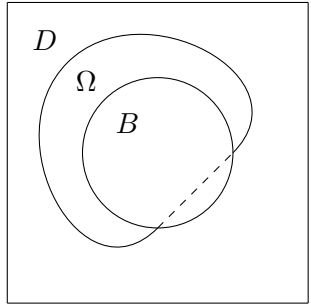
\begin{figure}[ht]
\begin{center}
\hspace{-3cm}
\begin{minipage}{.3\textwidth}
\begin{tikzpicture}[scale=1]
  \draw(-2,-2)--(-2,2);
  \draw(2,-2)--(2,2);
  \draw(-2,-2)--(2,-2);
  \draw(-2,2)--(2,2);
  \node at (-0.4,0.4){$B$};
  \node at (-0.95,0.95){$\Omega$};
  \node at (-1.5,1.5){$D$};
  \draw[dashed](0,-1)--(1,0);
  \draw(0,0) circle (1);
  \draw (0,-1) .. controls (-1,-2) and (-2.2,0.2) ..
      (-1.2,1.2) .. controls (-0.2,2.2) and (2,1) .. (1,0);
\end{tikzpicture}
\end{minipage}
\end{center}
\caption{Example of extension of the indicator of a half-plane on $B$ to the indicator of a smooth domain $\Omega$ on $D$}
\label{ext}
\end{figure}

Observing that if $E_D$ is a linear subspace of $L^2(D)$ of dimension at most $n$,
its restriction $E_B$ to $B$ is a linear subspace of $L^2(B)$ of dimension at most $n$,
and one has
\[
{\rm dist}(\Chi_\Omega,E_B)_{L^2(B)}\leq {\rm dist}(\Chi_\Omega,E_D)_{L^2(D)}.
\]
By infimizing, it follows that 
\[
d_n(\cF_{s,R,M})_{L^2(D)} \geq d_n(\Pp_0^2)_{L^2(B)} \geq c n^{-\frac 1 4},
\]
which concludes the proof. \hfill $\Box$

\begin{remark}
The fact that we impose conditions on $R$ and $M$ in the above statement is
natural since the class $\cF_{s,R,M}$ becomes empty if $R$ is not
small enough and $M$ not large enough, due to the fact that the sets $\Omega$
are assumed to be contained in the interior of $D$.
\end{remark}

\begin{remark}
The above results are easily extended to higher dimension $d\geq 2$,
with a similar definition for the class $\cF_{s,R,M}$. The rate of approximation
in $L^q$ norm by piecewise constant functions on uniform partitions is then $n^{-\frac 1 {dq}}$, 
which in the case $q=2$ is proved by a similar argument to be the best achievable by any linear reconstruction 
method. We conjecture that the same holds for more general $1\leq q\leq \infty$.
\end{remark}

\section{Shape recovery by nonlinear least-squares}
\label{section5}

\subsection{Nonlinear reconstruction on a stencil}

We now discuss a nonlinear reconstruction method for $u\in \cF_{s,R,M}$, whose output 
$\t u$ is the indicator of a domain $\t \Omega$ with polygonal boundary : on each cell $T$, the domain
$\t \Omega$ coincides with a certain half plane. In order to define the delimiting line
we only use the average values of $u$ on a $3\times 3$ stencil of cells centered at $T$.

We assume that $h<R$, so that $\Omega$ does not intersect the boundary cells $T_{i,j}$ with  $i$ or $j$ in $\{1,L\}$, and fix indices $1<i,j<L$. For the cell $T=T_{i,j}$, denote
$\o x=((i-\frac{1}{2})h,(j-\frac{1}{2})h)$ its center, and
\[
S=[(i-2)h,(i+1)h]\times [(j-2)h,(j+1)h]=\bigcup_{i-1\leq i' \leq i+1,\,j-1\leq j' \leq j+1}T_{i'j'}
\]
the stencil composed of $T$ and its $8$ neighboring cells. We define the nonlinear approximation space
\be
V_2:=\left\{\Chi_{\vec n \cdot (x-\o x)\geq c}\; : \;  \:\vec n\in\mathbb S^1, c\in \R \right\},
\label{def V_2}
\ee
which is a two-parameter family as each function is determined by $(\arg \vec n,c)\in [0,2\pi)\times \R$,
where $\arg \vec n$ is the angle of $\vec n$ with respect to the horizontal axis.

Here, our measurements are the average values of $u$ on the cells contained in $S$
\[
\ell(u)=(a_{T'}(u))_{T'\subset S}\in \R^9.
\]
In order to find a reconstruction of $u$ in $V_2$ based on these measurements, we need an inverse stability property of the form \iref{invstab}. This is not possible here, since $\ell$ cancels on all functions $\Chi_\Omega\in V_2$ with $\Omega\cap S=\emptyset$. We therefore restrict the nonlinear family $V_2$, and consider only indicators of half-planes whose boundary passes through the central cell $T$:
\be
V_{2,T}:=\Big\{\Chi_\Omega\in V_2, \partial\Omega\cap T\neq\emptyset\Big\}=\Big\{\Chi_{\vec n \cdot (x-\o x)\geq c}, \:\vec n\in\mathbb S^1, |c|\leq \frac{h}{2}|\vec n|_1\Big\}.
\label{def VT}
\ee
In this setting, we prove the existence of the following stability constants for 
$V=L^1(S)$ and $Z=\ell^1$,
which is the best norm on $\R^m$ in view of Theorem \ref{theonorm}.
For notational simplicity, we omit the reference to $Z$ in these constants.

\begin{proposition}
\label{prop stencil}
One has 
\begin{equation}
\|\ell(u)\|_{1}\leq \alpha \|u\|_{L^1(S)}, \quad u\in L^1(D),
\label{alpha stencil}
\end{equation}
and
\begin{equation}
\|u-v\|_{L^1(S)}\leq \mu \|\ell(u-v)\|_{1},\quad u,v\in V_{2,T},
\label{mu stencil}
\end{equation}
where $\alpha= h^{-2}$ and $\mu=\frac{3}{2}h^{2}$ are the optimal constants.
\end{proposition}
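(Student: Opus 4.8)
The bound \iref{alpha stencil} is immediate: since each cell $T'\subset S$ has measure $h^2$, one has $\|\ell(u)\|_1=\sum_{T'\subset S}|a_{T'}(u)|\leq\sum_{T'\subset S}\frac1{h^2}\int_{T'}|u|=h^{-2}\|u\|_{L^1(S)}$, so $\alpha=h^{-2}$ works. The content lies in the inverse estimate \iref{mu stencil}. The plan is first to remove the scale: the substitution $x\mapsto x/h$ maps $S$ onto a $3\times3$ block of \emph{unit} cells, multiplies $\|u-v\|_{L^1(S)}$ by $h^{-2}$, and leaves the cell averages $\ell(u-v)$ unchanged, so it suffices to prove \iref{mu stencil} for $h=1$ with $\mu=C_0$ an absolute constant. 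Writing $u=\Chi_{\Omega_u}$, $v=\Chi_{\Omega_v}$ with boundary lines $L_u,L_v$ meeting the central cell $T$, and setting $b_{T'}:=\int_{T'}(u-v)=|T'\cap(\Omega_u\setminus\Omega_v)|-|T'\cap(\Omega_v\setminus\Omega_u)|$, the claim becomes the purely geometric inequality
\be
|S\cap(\Omega_u\triangle\Omega_v)|\leq C_0\sum_{T'\subset S}|b_{T'}|.
\ee

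The key observation is that $\Chi_{\Omega_u}-\Chi_{\Omega_v}$ equals $+1$ on $\Omega_u\setminus\Omega_v$ and $-1$ on $\Omega_v\setminus\Omega_u$, so one has $|b_{T'}|=\int_{T'}|u-v|$ for every cell $T'$ \emph{unless} $T'$ meets both sets, in which case cancellation occurs. If $L_u,L_v$ are parallel, then $\Omega_u\triangle\Omega_v$ is a single strip on which $u-v$ has constant sign, no cell sees cancellation, and $\sum_{T'}|b_{T'}|=|S\cap(\Omega_u\triangle\Omega_v)|$, giving $C_0=1$. If the lines cross at a point $p$, then $\Omega_u\triangle\Omega_v$ is a pair of opposite sectors (a ``bowtie'') carrying opposite signs, and cancellation in $T'$ can occur only if $T'$ meets both sectors. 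Here I would use the constraint that \emph{both} lines pass through the unit cell $T$: if $p\notin S$, this constraint forces the angle between $L_u,L_v$ to be small and the sector meeting $S$ to open \emph{towards} $T$, so only one sector intersects $S$ and again $C_0=1$. Thus cancellation requires $p\in S$.

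The delicate case is therefore two transversal lines crossing at a point $p$ inside the stencil. The plan is to exhibit single-sign regions, met by only one of the two opposite sectors, whose area is a fixed proportion of the total $|S\cap(\Omega_u\triangle\Omega_v)|$. Describing the bowtie in polar coordinates about $p$ with half-angle $\theta/2$ and bisector $d$, the sector area out to distance $t$ grows like $\theta t^2$; since $p\in S$ and $T$ is the central cell of a $3\times3$ block, moving along $\pm d$ there remains at least a cell-width of room inside $S$ in which only one sector is present, and the cancellation is confined to the cell(s) containing $p$. Comparing captured and total areas in the model configuration where $p$ is the center and $d$ is axis-aligned shows that one loses only a fixed fraction (one ninth) of the area, so that $\sum_{T'}|b_{T'}|\geq c_0|S\cap(\Omega_u\triangle\Omega_v)|$ with $c_0>0$ absolute and $C_0=1/c_0$.

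The main obstacle is to make $c_0$ \emph{uniform} over all admissible pairs, in particular as $\Omega_v\to\Omega_u$ (small angle and/or small offset gap), where both sides vanish and a naive compactness argument over the parameter set $(\arg\vec n,c)$, $\vec n\in\mathbb{S}^1$, $|c|\leq\tfrac12|\vec n|_1$, degenerates on the diagonal. The saving grace is homogeneity: shrinking the opening $\theta$ (or the offset gap) scales numerator and denominator by the same factor, so the ratio is governed by a limiting wedge configuration, and continuity of the captured-fraction functional together with compactness in the remaining parameters yields a positive infimum. Equivalently, one may linearise the cell-average map at a fixed line through $T$ and prove that the resulting Jacobian from the two line parameters to the nine averages is injective with smallest singular value bounded below uniformly over $\vec n\in\mathbb{S}^1$ and $|c|\leq\tfrac12|\vec n|_1$; combining this uniform infinitesimal lower bound with the global single-sign capture argument above gives the absolute constant $C_0$ and completes the proof of \iref{mu stencil}.
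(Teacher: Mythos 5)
Your treatment of \iref{alpha stencil}, the rescaling to $h=1$, the parallel case, and the case of an intersection point outside $S$ (where the constraint that both boundary lines cross the central cell $T$ indeed forces a single sector to meet $S$) are all sound and consistent with the paper, which handles the last situation by noting that $w=u-v$ then has constant sign on $S$, so that $\|\ell(w)\|_1=h^{-2}\|w\|_{L^1(S)}$. The genuine gap is in the only hard case, two lines crossing at $p\in S$. First, your localization claim that ``cancellation is confined to the cell(s) containing $p$'' is false as stated: a cell sees cancellation as soon as it meets both opposite sectors, which requires it to straddle the separating line $\partial P=\{x:(\vec n_u-\vec n_v)\cdot(x-\o x)=c_u-c_v\}$ through $p$, and this can happen for cells not containing $p$ (e.g.\ when $p$ lies near a cell corner, or when the opening angle is large so that the sectors hug $\partial P$ closely). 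Second, and more seriously, the quantitative heart of the argument is never established: the ``fixed fraction'' claim is checked only in one model configuration, and your fallback -- extend the ratio to the diagonal by homogeneity and invoke compactness -- silently requires two lemmas that are essentially as hard as the proposition itself, namely (i) injectivity of $\ell$ on $V_{2,T}$ (to rule out distinct limits $u\neq v$ with $\ell(u)=\ell(v)$, i.e.\ ratio zero off the diagonal) and (ii) a uniform lower bound on the smallest singular value of the linearized map $(\arg\vec n,c)\mapsto\ell(\Chi_{\vec n\cdot(x-\o x)\geq c})$ over the whole admissible parameter set, including non-smooth configurations where the line passes through cell corners. You assert (ii) but give no argument, so the blow-up route is a restatement of the problem rather than a proof; note also that a pure compactness argument would yield no explicit constant, whereas the statement is still fine with that, so this is a gap of execution, not of strategy.

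For contrast, the paper's proof is direct and quantitative, split by the sign of $\vec n_u\cdot\vec n_v$. When $\vec n_u\cdot\vec n_v\leq 0$ it exhibits an explicit ball $B\subset S$ of radius $(\frac12-\frac{\sqrt2}{3})h$ on which $w\equiv 1$ and such that $w\geq 0$ on every cell meeting $B$, giving $\|\ell(w)\|_1\geq h^{-2}|B|$ against $\|w\|_{L^1(S)}\leq 9h^2$. When $\vec n_u\cdot\vec n_v>0$ it fans the bowtie by lines $\Delta_\theta$ through $I$ for $\theta\in[\theta_u,\theta_v]$, proves each chord $S_\theta=\Delta_\theta\cap S$ has length at least $\sqrt6\,h$ (using that both lines cross $T$), whence
\begin{equation*}
\int_S|w|\;\geq\;\frac32(\theta_v-\theta_u)h^2,
\end{equation*}
and then controls the cancellation mass by integrating $|w|$ over the band $\overline S$ of width $\sqrt2 h$ around $\partial P$, obtaining $h^{-2}\int_{\overline S}|w|\leq \tan\bigl(\frac{\theta_v-\theta_u}{2}\bigr)\leq\frac2\pi(\theta_v-\theta_u)$; since a cell can contain both signs of $w$ only if it meets $\overline S$, this yields $\|\ell(w)\|_1\geq\bigl(\frac32-\frac4\pi\bigr)(\theta_v-\theta_u)$, which is the decisive margin ($\frac32>\frac4\pi$), against $\|w\|_{L^1(S)}\leq 18(\theta_v-\theta_u)h^2$. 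This band estimate -- showing the cancellation-prone mass is a strictly smaller multiple of the angle $(\theta_v-\theta_u)$ than the total mass -- is exactly the uniform ingredient your proposal is missing; if you want to salvage your approach, proving your step (ii) would likely end up reproducing a computation of this kind.
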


The proof of the stability property \iref{alpha stencil} is trivial since
on each cell 
\[
|a_{T'}(u)|\leq |T'|^{-1}\|u\|_{L^1(T')}=h^{-2}\|u\|_{L^1(T')},
\] 
with equality in case $u$ does not change sign.
The proof of the inverse stability \iref{mu stencil} is quite technical and left to the appendix.

Given the noisy observation
\[
z=\ell(u)+\eta \in \R^9,
\]
we define the estimator of $u$ on the cell $T$ by
\be
\t u_T\in \argmin_{v\in V_{2}}\|z-\ell(v)\|_1.
\label{locrec}
\ee
Here we minimize over all $V_2$, that is on all indicators of half planes, but we note that 
we may restrict to half-planes whose boundary passes through the stencil $S$.

The following result, which uses Proposition \ref{prop stencil},
shows that its distance to $u$ in $L^1(T)$ 
is comparable to the error between $u$ and its
best approximation in the $L^1(S)$ norm
\[
\o u_S:=\argmin_{v\in V_{2}}\|u-v\|_{L^1(S)}.
\]
\begin{lemma}
For all $u\in \cF_{s,R,M}$, one has
\[
\|u-\t u_T\|_{L^1(T)}\leq C_1\|u-\o u_S\|_{L^1(S)}+2\beta\mu \|\eta\|_p,
\]
where $C_1=1+2\alpha\mu=4$ and $C_2=2\beta\mu=3^{3-\frac 2 p}h^2$,
with $\alpha,\mu$ as in Proposition \ref{prop stencil}, and $\beta=9^{1-\frac 1 p}$ the
maximal ratio between $\ell^p$ and $\ell^1$ norm in $\R^9$.
\end{lemma}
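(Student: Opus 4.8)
The plan is to apply the general best fit estimator bound from Theorem \ref{theobestfit} in the localized setting where $V=L^1(S)$, $V_n=V_2$, the measurement map is $\ell(u)=(a_{T'}(u))_{T'\subset S}$, and the norm on $\R^9$ is $Z=\ell^1$. The estimator $\t u_T$ defined in \iref{locrec} is exactly the best fit estimator \iref{best-fit-est} for this data, so Theorem \ref{theobestfit} should yield immediately a bound of the form $\|u-\t u_T\|_{L^1(S)}\leq C_1 e_n(u)+C_2\|\eta\|_p$ with $C_1=1+2\alpha\mu$ and $C_2=2\beta\mu$. The constants $\alpha$ and $\mu$ are supplied by Proposition \ref{prop stencil}, giving $\alpha\mu=(h^{-2})(C_0h^2)=C_0$ so that $C_1=1+2C_0$; and $\beta=9^{1-1/p}$ is just the norm-equivalence constant between $\ell^p$ and $\ell^1$ on $\R^9$, which combined with $\mu=C_0h^2$ gives $C_2=2\beta\mu=2C_0 9^{1-1/p}h^2$. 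These match the stated constants exactly.

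The one genuine subtlety — and the step I expect to be the main obstacle — is that Theorem \ref{theobestfit} is stated with inverse stability \iref{invstab} holding on the full family $V_n$, whereas Proposition \ref{prop stencil} only establishes the inverse stability \iref{mu stencil} on the \emph{restricted} family $V_{2,T}$ (indicators of half-planes whose boundary crosses the central cell $T$), while the minimization in \iref{locrec} ranges over all of $V_2$. I therefore cannot apply Theorem \ref{theobestfit} as a black box. The resolution, as the text hints, is that the minimizer $\t u_T$ may be taken in $V_{2,T}$: if the boundary of the optimal half-plane misses the stencil $S$ then $\ell(\t u_T)$ is constant ($0$ or $9$ in appropriate units) and one checks this cannot beat a half-plane cutting through $T$, given that $u\in\cF_{s,R,M}$ has its boundary passing through $T$. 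Once both $\t u_T$ and the comparison element $\o u_S$ lie in $V_{2,T}$, the inverse stability \iref{mu stencil} applies to their difference and the proof of Theorem \ref{theobestfit} goes through verbatim.

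Concretely, I would rerun the three-line argument of Theorem \ref{theobestfit} with $v=\o u_S$: writing
\[
\|u-\t u_T\|_{L^1(T)}\leq \|u-\o u_S\|_{L^1(S)}+\|\o u_S-\t u_T\|_{L^1(S)},
\]
then bounding the second term by $\mu\|\ell(\o u_S)-\ell(\t u_T)\|_1$ via \iref{mu stencil} (legitimate because both arguments lie in $V_{2,T}$), using the minimality of $\t u_T$ to get $\|\ell(\o u_S)-\ell(\t u_T)\|_1\leq 2\|z-\ell(\o u_S)\|_1$, and finally splitting $\|z-\ell(\o u_S)\|_1\leq \|\ell(u-\o u_S)\|_1+\|\eta\|_1\leq \alpha\|u-\o u_S\|_{L^1(S)}+\beta\|\eta\|_p$ by \iref{alpha stencil} and the definition of $\beta$. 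Assembling these gives $\|u-\t u_T\|_{L^1(T)}\leq (1+2\alpha\mu)\|u-\o u_S\|_{L^1(S)}+2\beta\mu\|\eta\|_p$, which is the claim with the stated constants. The only thing requiring care is justifying that passing from $L^1(S)$ to $L^1(T)$ on the left is harmless — this is immediate since $T\subset S$ and hence $\|u-\t u_T\|_{L^1(T)}\leq\|u-\t u_T\|_{L^1(S)}$ — together with the reduction of the minimizer to $V_{2,T}$ described above.
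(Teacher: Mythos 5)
Your chain of inequalities and the constants are exactly right, and you correctly identified the genuine subtlety: Theorem \ref{theobestfit} cannot be applied as a black box because the inverse stability \iref{mu stencil} holds only on the restricted family $V_{2,T}$, while the minimization \iref{locrec} ranges over all of $V_2$. However, your resolution of this subtlety is wrong. It is not true that the minimizer $\t u_T$ ``may be taken in $V_{2,T}$,'' nor that $\o u_S\in V_{2,T}$: the lemma is asserted for every $u\in\cF_{s,R,M}$ and every cell $T$, including the (vast majority of) cells whose stencil $S$ does not meet $\partial\Omega$. For such a cell, say with $u\equiv 0$ on $S$ and small noise, the best fit over $V_2$ is achieved only by half-planes whose boundary misses $S$, and every element of $V_{2,T}$ has strictly \emph{worse} data misfit — so your claim that a constant $\ell(\t u_T)$ ``cannot beat a half-plane cutting through $T$'' fails precisely there, and your supporting premise that the boundary of $u$ passes through $T$ is not part of the hypotheses. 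Moreover, even in cases where $\t u_T\in V_{2,T}$, you would separately need $\o u_S\in V_{2,T}$ to legitimize the step $\|\o u_S-\t u_T\|_{L^1(S)}\leq\mu\|\ell(\o u_S-\t u_T)\|_1$, and $\o u_S$ is by definition an argmin over all of $V_2$; you assert this membership but give no argument for it.

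The paper resolves the difficulty differently, by a case distinction. If both $\t u_T$ and $\o u_S$ lie in $V_{2,T}$, the argument is exactly the one you wrote. Otherwise, at least one of $\t u_T,\o u_S$ is constant ($0$ or $1$) on the central cell $T$; since both functions take values in $\{0,1\}$, the difference $\t u_T-\o u_S$ then has constant sign on $T$, whence
\[
\|\o u_S-\t u_T\|_{L^1(T)}=h^2\,|a_T(\t u_T-\o u_S)|\leq h^2\,\|\ell(\t u_T-\o u_S)\|_1,
\]
and no inverse stability is needed at all: the minimality of $\t u_T$ gives $\|\ell(\t u_T)-z\|_1\leq\|\ell(\o u_S)-z\|_1$, and then \iref{alpha stencil} together with the definition of $\beta$ yields $\|u-\t u_T\|_{L^1(T)}\leq 3\|u-\o u_S\|_{L^1(S)}+2\beta h^2\|\eta\|_p$, which is absorbed by the stated constants since $C_0\geq 1$ implies $C_1=1+2C_0\geq 3$. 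The key observation you missed is that the lemma only claims control of the error on the central cell $T$, not on all of $S$, so in the degenerate case the single measurement $a_T$ — one coordinate of $\ell$ — already suffices. As written, your proof has a genuine gap for all cells away from $\partial\Omega$, which is where the degenerate case actually occurs.
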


\begin{proof}  We distinguish two cases:
\begin{itemize}
\item
If $\t u_T\in V_{2, T}$ and $\o u_S\in V_{2,T}$, that is, both boundaries pass through the central cell $T$,
we apply Theorem \ref{bestfitbound}
 together with Proposition \ref{prop stencil}

\begin{align*}
\|u-\t u_T\|_{L^1(T)}\leq \|u-\t u_T\|_{L^1(S)}
&\leq C_1\min_{v\in V_{2,T}}\|u-v\|_{L^1(S)}+C_2\|\eta\|_{p}\\
&= C_1\|u-\o u_S\|_{L^1(S)}+C_2\|\eta\|_{p}.
\end{align*}
with $C_1=1+2\alpha\mu$, $C_2=2\beta\mu$.
\item
Otherwise, either $\t u_T$ or $\o u_S$ has constant value $0$ or $1$ on $T$, so $\t u_T-\o u_S$  
has constant sign on $T$, and thus

\begin{align*}
\|\o u_S-\t u_T\|_{L^1(T)}&=h^2|a_{T}(\t u_T-\o u_S)|\leq h^2\|\ell(\t u_T-\o u_S)\|_1\leq h^2(\|\ell(\o u_S)-z\|_1+\|\ell(\t u_T)-z\|_1)\\
&\leq 2h^2\|\ell(\o u_S)-z\|_1
\leq 2h^2\|\ell(\o u_S-u)\|_1+2h^2\|\eta\|_1
\leq 2\|u-\o u_S\|_{L^1(S)}+2h^2\beta\|\eta\|_p.
\end{align*}
\end{itemize}
By triangle inequality, it follows that
\[
\|u-\t u_T\|_{L^1(T)}\leq 3\|u-\o u_S\|_{L^1(S)}+2h^2\beta\|\eta\|_p,
\]
which has better constants than in the estimate obtained in the first case, since the constant $C_0$ is larger than $1$.
\end{proof}

The order of the best local approximation error $\|u-\o u_S\|_{L^1(S)}$ that appears
as a bound for the reconstruction error $\|u-\t u_T\|_{L^1(T)}$ depends on the smoothness 
of the boundary, as expressed in the following lemma.

\begin{lemma}
For all $u\in \cF_{s,R,M}$, with $R\geq\frac{3}{\sqrt 2}h$, one has
\[
\|u-\o u_S\|_{L^1(S)}\leq M(3\sqrt 2 h)^{\min(s,2)+1}.
\] 
\end{lemma}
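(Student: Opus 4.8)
The plan is to bound the best approximation error $\|u-\o u_S\|_{L^1(S)}$ from above by the error incurred by a single, cleverly chosen half-plane, namely the one whose boundary is the tangent line to $\partial\Omega$ at the center $\o x$ of the stencil. First I would use the hypothesis $R\geq\frac{3}{\sqrt 2}h$ to guarantee that the whole stencil $S$, a square of sidelength $3h$ centered at $\o x$, sits inside the neighborhood $\{\o x+z_1e_1+z_2e_2 : |z_1|,|z_2|\leq R\}$ furnished by Definition \ref{def F alpha}: every point $y\in S$ lies at Euclidean distance $|y-\o x|\leq \frac{3\sqrt 2}{2}h=\frac{3}{\sqrt 2}h\leq R$ from $\o x$, so in particular $|z_1|=|(y-\o x)\cdot e_1|\leq a:=\frac{3\sqrt 2}{2}h\leq R$. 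In the rotated frame $(e_1,e_2)$ the boundary $\partial\Omega$ is then the graph $z_2=\psi(z_1)$ of a function $\psi\in\cC^s$ with $\|\psi\|_{\cC^s}\leq M$, defined on $[-R,R]\supseteq[-a,a]$, and $\Omega\cap S=\{z_2\leq\psi(z_1)\}\cap S$.

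Next I would take $\ell(z_1):=\psi(0)+\psi'(0)z_1$ (just the constant $\psi(0)$ in the case $s=1$, where only Lipschitz regularity is available) and let $H\in V_2$ be the half-plane $\{z_2\leq\ell(z_1)\}$, which is admissible since $V_2$ contains every half-plane. As $\o u_S$ minimizes over $V_2$, one has $\|u-\o u_S\|_{L^1(S)}\leq\|u-\Chi_H\|_{L^1(S)}$, so it suffices to estimate the symmetric difference $|(\Omega\triangle H)\cap S|$. Writing this area in the $(z_1,z_2)$ coordinates, whose Jacobian is $1$, the symmetric difference is contained in the vertical strip between the two graphs; clipping to $S$ (which only decreases the area and automatically handles the degenerate cases where $\partial\Omega$ does not cross $S$) yields the clean bound $\|u-\Chi_H\|_{L^1(S)}\leq\int_{-a}^{a}|\psi(z_1)-\ell(z_1)|\,dz_1$.

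The core estimate is then the pointwise Taylor bound $|\psi(z_1)-\ell(z_1)|\leq M|z_1|^{\min(s,2)}$ on $[-a,a]$. For $1<s\leq 2$ this comes from writing the first-order remainder as $\int_0^{z_1}(\psi'(t)-\psi'(0))\,dt$ and using the H\"older seminorm $|\psi'(t)-\psi'(0)|\leq M|t|^{s-1}$; for $s\geq 2$ it follows from $\|\psi''\|_{L^\infty}\leq M$ and the Lagrange remainder $\tfrac12\psi''(\xi)z_1^2$, which only gives order $2$ precisely because $\ell$ is a line (this is why the exponent saturates at $\min(s,2)$); and for $s=1$ the constant choice gives $|\psi(z_1)-\psi(0)|\leq M|z_1|$. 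Substituting and using $a=\frac{3\sqrt 2 h}{2}$ gives
\[
\int_{-a}^{a}M|z_1|^{\min(s,2)}\,dz_1=\frac{2M\,a^{\min(s,2)+1}}{\min(s,2)+1}=\frac{M\,(3\sqrt 2 h)^{\min(s,2)+1}}{(\min(s,2)+1)\,2^{\min(s,2)}}\leq M(3\sqrt 2 h)^{\min(s,2)+1},
\]
which is the claimed bound, with room to spare in the constant. The point demanding care is not any single computation but the uniform treatment of all regimes of $s$ — fractional, and the integer cases $s=1,2$ where the $\cC^s$ norm of Definition \ref{def F alpha} only encodes Lipschitz control of the top derivative — together with checking that the tangent-line half-plane belongs to $V_2$ and that the two-dimensional symmetric-difference area genuinely collapses to the one-dimensional integral of $|\psi-\ell|$.
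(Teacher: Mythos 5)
Your proof is correct and follows essentially the same route as the paper: both compare $\o u_S$ against the half-plane in $V_2$ determined by an affine approximation $\xi$ of $\psi$ on $|z_1|\leq\frac{3}{\sqrt 2}h$ (the paper invokes the existence of such a $\xi$ with pointwise error $M(3\sqrt 2 h)^{\min(s,2)}$ and multiplies by the interval length $3\sqrt 2 h$, whereas you take the explicit Taylor line at $0$ and integrate $M|z_1|^{\min(s,2)}$, which even yields a slightly better constant). The only hairline imprecision is your appeal to $\|\psi''\|_{L^\infty}\leq M$ at $s=2$, where the paper's convention only gives $\psi'$ Lipschitz with constant $M$; your own integral-remainder formula $\int_0^{z_1}(\psi'(t)-\psi'(0))\,dt$ covers that case verbatim.
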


\begin{proof} We apply the definition of $\cF_{s,R,M}$ at point $\o x$: as $R\geq\frac{3}{\sqrt 2}h$, the stencil $S$ is contained in the domain
\[
\{\o x+z_1e_1+z_2e_2, \;|z_1|,|z_2|\leq R\},
\]
so $u|_S$ is the indicator of a domain delimited by a $\cC^s$ function $\psi$, with $\|\psi\|_{\cC^s}\leq M$. From the
definition of $\cC^s$, there exists an affine function $\xi$ such that
\[
|\psi(z_1)-\xi(z_1)|\leq M(3\sqrt 2 h)^{\min(s,2)},\quad |z_1|\leq \frac{3}{\sqrt 2}h.
\]
Then the function $v:\o x+z_1e_1+z_2e_2 \mapsto \Chi_{z_2\leq \xi(z_1)}$ belongs to $V_2$, and we have
\[
\|u-\o u_S\|_{L^1(S)}\leq \|u-v\|_{L^1(S)}\leq M(3\sqrt 2 h)^{\min(s,2)+1}.
\]
\end{proof}

\subsection{Global nonlinear reconstruction}

We now consider the process of recovering $u\in  \cF_{s,R,M}$ globally from its
data 
\[
z=\ell(u)+\eta,
\] 
where now $\ell(u):=(a_T(u))_{T\in \cT}\in \R^n$ and $\eta\in \R^n$ is the noise vector.
Applying to each inner cell $T\in \cT$ the previous reconstruction procedure based on the $3\times 3$ stencil $S$
centered at $T$, we obtain a global recovery $\t u=\t u(z)$ such that
\[
\t u|_{T}=\t u_T|_{T},\quad T=T_{i,j}\in \cT,\quad 1<i,j<L,
\]
where $\t u_T$ is the local estimator from \iref{locrec}. On the boundary cells $T=T_{i,j}$ with $i$ or $j$ in $\{1,L\}$, $u|_T$ is zero by Definition $\ref{def F alpha}$ so we simply set $\t u|_T=0$.
Note that $\t u$ is of the form
\[
\t u =\Chi_{\t \Omega},
\]
where $\t \Omega$ has piecewise linear boundary with respect to the mesh $\cT$. The following result gives a
global approximation bound, which confirms the improvement over linear methods when $s>1$.

\begin{theorem}
For all $u\in \cF_{s,R,M}$, one has 
\[
\|u-\t u\|_{L^q(D)}\leq C_1n^{-\frac{\min(1,s/2)}{q}}+C_2n^{-\frac{1}{pq}}\|\eta\|_p^{\frac{1}{q}}.
\]
\end{theorem}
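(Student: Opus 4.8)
The plan is to reduce the global $L^q(D)$ estimate to the per-cell $L^1$ bounds already established, exploiting that both $u=\Chi_\Omega$ and $\t u=\Chi_{\t\Omega}$ are $\{0,1\}$-valued. First I would observe that on each cell $T$ the difference $u-\t u$ takes values in $\{-1,0,1\}$, so that $|u-\t u|^q=|u-\t u|$ pointwise and hence $\|u-\t u\|_{L^q(T)}^q=\|u-\t u_T\|_{L^1(T)}$. Summing over cells and discarding the boundary cells (on which $u=\t u=0$, since $h<R$ keeps $\Omega$ away from $\partial D$), this yields
\[
\|u-\t u\|_{L^q(D)}^q=\sum_{T\ {\rm inner}}\|u-\t u_T\|_{L^1(T)},
\]
so the whole argument reduces to bounding this sum by means of the two preceding lemmas.

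For each inner cell I would chain the local recovery lemma with the local approximation lemma (valid once $h$ is small enough that $R\geq\frac{3}{\sqrt2}h$), obtaining
\[
\|u-\t u_T\|_{L^1(T)}\leq C_1 M(3\sqrt2\,h)^{\min(s,2)+1}+C_2\|\eta^{(T)}\|_p,
\]
where $\eta^{(T)}$ denotes the restriction of the noise vector to the $3\times3$ stencil $S$ of $T$. The decisive point is that the approximation term vanishes whenever $u$ is constant on $S$, i.e.\ whenever $S$ does not meet $\partial\Omega$: a crude bound summing the approximation term over all $n=h^{-2}$ cells would only give the rate $n^{-1/2}$ and miss the claimed exponent $\min(1,s/2)$. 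I therefore need the refined count. By the arclength estimate used in the proof of Proposition \ref{linear upper bound}, $\partial\Omega$ meets only $O(h^{-1})$ cells, and since each cell lies in at most nine stencils, only $O(h^{-1})$ cells have a stencil intersecting $\partial\Omega$. Summing the approximation term over just these cells gives $O(h^{-1})\cdot h^{\min(s,2)+1}=O(h^{\min(s,2)})=O(n^{-\min(s/2,1)})$.

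It remains to aggregate the noise contribution $C_2\sum_T\|\eta^{(T)}\|_p$. Here I would use H\"older together with an overlap count: $\sum_T\|\eta^{(T)}\|_p\leq n^{1-1/p}\big(\sum_T\|\eta^{(T)}\|_p^p\big)^{1/p}$, and since each component $\eta_{T'}$ appears in at most nine stencils, $\sum_T\|\eta^{(T)}\|_p^p\leq 9\|\eta\|_p^p$, whence $\sum_T\|\eta^{(T)}\|_p\leq 9^{1/p}n^{1-1/p}\|\eta\|_p$. Recalling that $C_2=O(h^2)=O(n^{-1})$ from Proposition \ref{prop stencil}, the total noise contribution to $\|u-\t u\|_{L^q(D)}^q$ is $O(n^{-1/p}\|\eta\|_p)$. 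Combining the two bounds and taking $q$-th roots with the subadditivity $(a+b)^{1/q}\leq a^{1/q}+b^{1/q}$, valid for $q\geq1$, produces exactly
\[
\|u-\t u\|_{L^q(D)}\leq C_1 n^{-\frac{\min(1,s/2)}{q}}+C_2 n^{-\frac{1}{pq}}\|\eta\|_p^{\frac1q}.
\]
I expect the main obstacle to be the sharp cell count in the second step: the naive sum over all cells loses a factor $h^{-1}$ and degrades the exponent, so the correct rate genuinely relies on pairing the $h$-decay of the local approximation error with the $O(h^{-1})$ bound on the number of boundary-adjacent stencils, rather than on any single new inequality. The noise aggregation is the only other nonroutine ingredient, and it is handled cleanly by the overlap count.
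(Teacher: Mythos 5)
Your proposal is correct and follows essentially the same route as the paper: reduce to per-cell $L^1$ bounds via the $\{-1,0,1\}$-valuedness of $u-\t u$, chain the two local lemmas, kill the approximation term off the $O(h^{-1})$ boundary-adjacent stencils by the arclength count, and control the noise through the ninefold stencil overlap. The only (immaterial) difference is bookkeeping: the paper first reduces to $p=q=1$ and then converts $\|\eta\|_1\leq n^{1-\frac1p}\|\eta\|_p$, whereas you keep general $p$ throughout via a per-stencil H\"older aggregation and take $q$-th roots at the end.
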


\begin{proof}
First notice that if the result is proved for $p=q=1$,
as $u-v$ has values in $\{-1,0,1\}$,
\[
\|u-v\|_{L^q(D)}^q=\|u-v\|_{L^1(D)}\leq C_1n^{-1}+C_2n^{-1}\|\eta\|_1\leq \(C_1^{\frac{1}{q}}n^{-\frac{1}{q}}+C_2^{\frac1q}n^{-\frac{1}{pq}}\|\eta\|_p^{\frac{1}{q}}\)^q,
\]
so it suffices treat the case $p=q=1$. 

By an argument similar to the proof of Proposition \ref{linear upper bound}, $\partial\Omega$ intersects at most $16N^2\lceil 2R\sqrt{1+M^2}/h\rceil$ stencils of $9$ cells. Using the fact that $u=\o u_S$ is a constant on any other stencil, we get

\begin{align*}
\|u-\t u\|_{L^1(D)}&=\sum_{T\text{ inner cell}}\|u-\t u\|_{L^1(T)}\leq \sum_{T\text{ inner cell}}(1+2\alpha\mu)\|u-\o u\|_{L^1(S)}+2\beta\mu\|\eta\|_{\ell^1(S)}\\
&\leq 16N^2\left\lceil \frac{2R\sqrt{1+M^2}}h\right\rceil M(3\sqrt 2 h)^{\min(s,2)+1}+18\beta\mu\|\eta\|_1\leq C_1h^{\min(s,2)}+C_2 h^2\|\eta\|_1.
\end{align*}
We conclude by recalling that $n=h^{-2}$.
\end{proof}

\begin{remark}
Here the convergence rate for the noiseless term $n^{-\frac{\min(1,s/2)}{q}}$ is limited due to the use of polygonal domains in the reconstruction. So the best approximation rate $h^{\frac 2 q}=n^{-\frac 1 q}$ is already attained for 
$\cC^2$ boundaries. When the smoothness parameter $s$ is larger than $2$, better rates $n^{-\frac{s}{2q}}$ should be reachable if we use non-linear approximation spaces that are richer than the space $V_2$, for example
indicator functions of domains with boundary that have a higher order polynomial description rather than
straight lines. Of course, the stable identification of these approximants in the sense of \iref{invstab}
might require stencils that are of larger size than $3\times 3$.
\end{remark}

\begin{remark}
If $\|\eta\|_\infty\leq \frac{1}{9}$, 
then $\t u$ is exactly equal to $u$ on any cell whose corresponding stencil does not intersect $\partial\Omega$, so the error is concentrated on $\mathcal O(\sqrt n)$ cells, leading to an improved rate $n^{-\frac{p+1}{2pq}}$ instead of $n^{-\frac{1}{pq}}$ for the noise term.
\end{remark}

\subsection{Numerical illustration}

We study the behavior of the above discussed linear and non-linear 
recovery methods from cell averages for the particular target function 
$u=\chi_{\Omega}$, with $\Omega$a slightly decentered disk of radius $r=0.325$.

The linear method consists of the piecewise constant approximation \iref{pw cst on each cell},
referred to as {\it PiecewiseConstant}.
As to the nonlinear method, for the local best fit problem, we use the $\ell^2$ norm 
on $\R^9$ instead of the $\ell^1$ norm. By norm equivalence on $\R^9$, the same convergence
results can be proved to hold with different constants. This method, which we refer to
as {\it LinearInterface}, does not ensure consistency of the reconstruction in the sense that
$a_T(\t u)=a_T(u)$. One way to approach this consistency property is to modify the $\ell^2$ norm
by putting a large weight on the central cell. We refer to this variant as {\it LinearInterfaceCC},
here taking the weight $100$.

\begin{figure}[H]
\centerline{\includegraphics[width=0.7\textwidth]{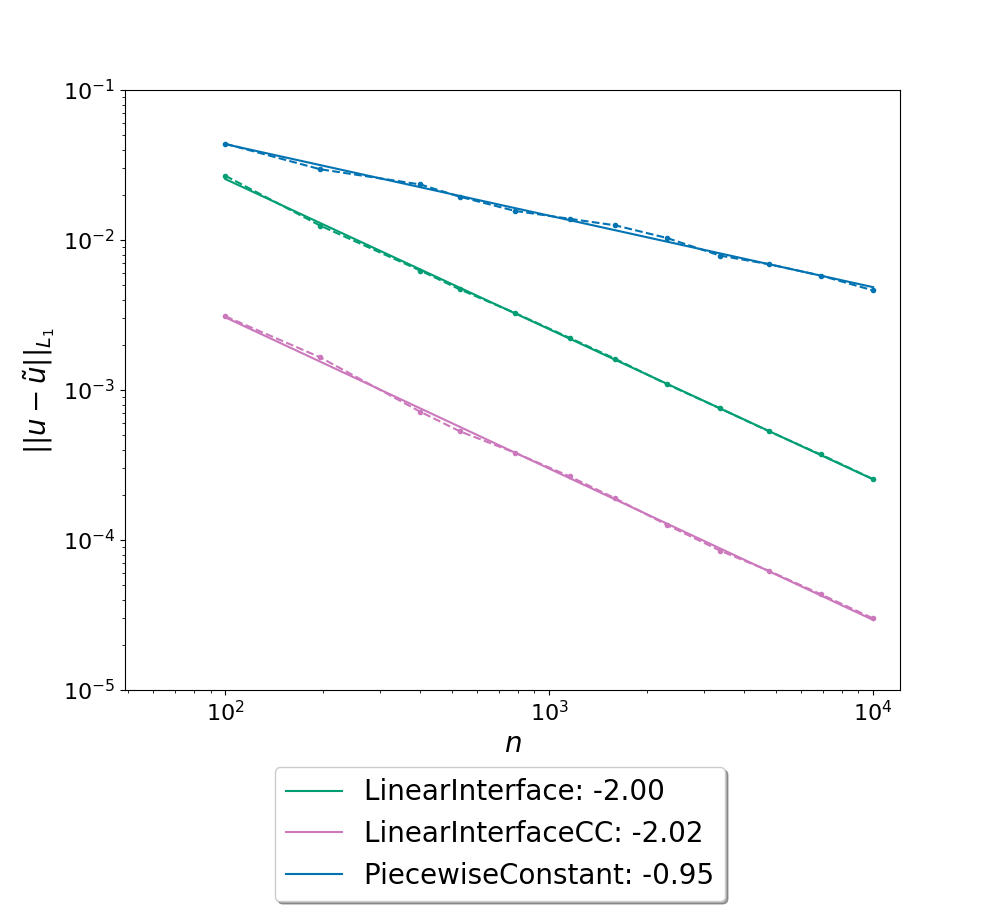}}
\caption{Convergence curves for the linear and nonlinear recovery methods}
\label{fig:plot_convergence}
\end{figure}

\begin{figure}
\centering
\begin{subfigure}{0.45\textwidth}
    \includegraphics[width=0.95\textwidth]{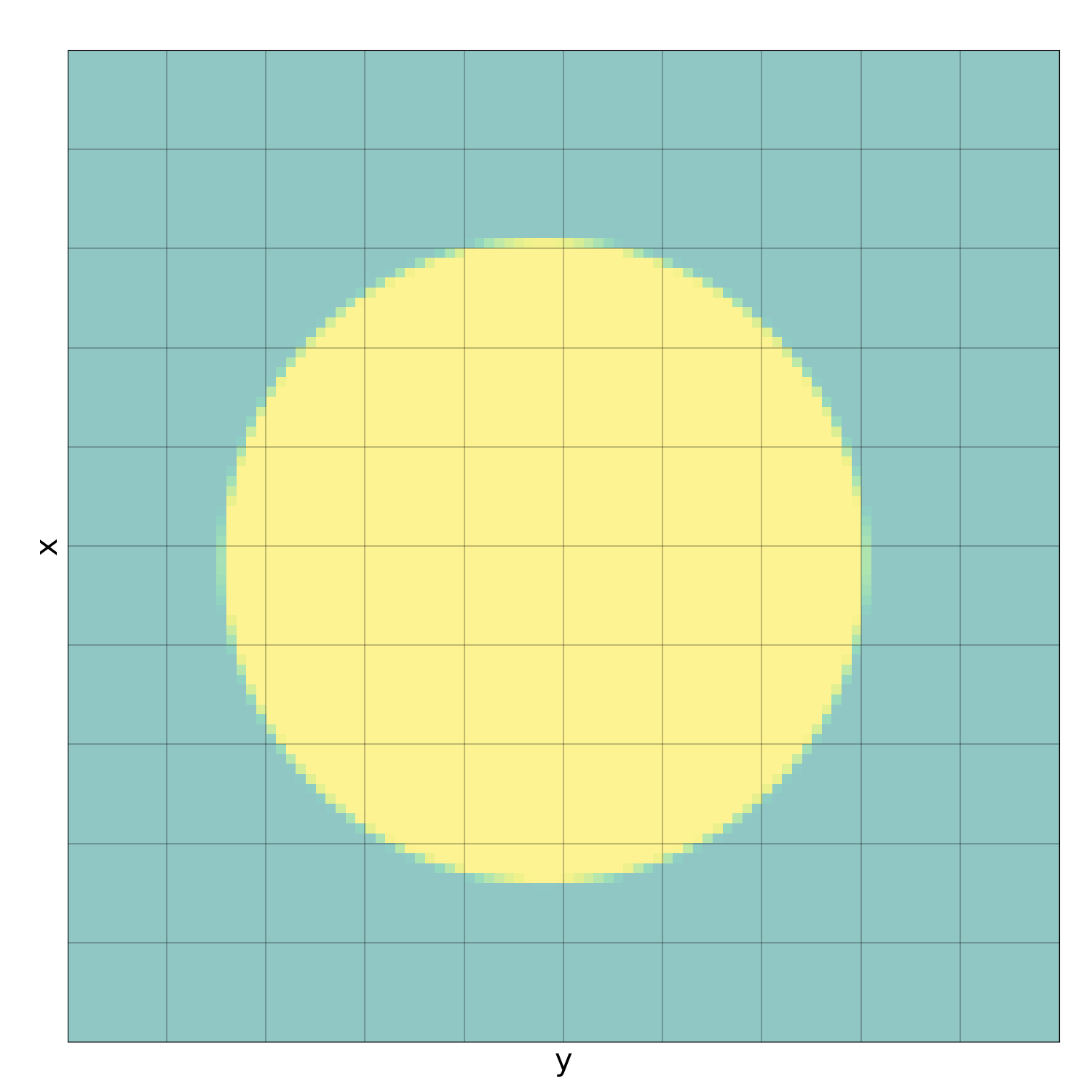}
    \caption{}
    \label{fig:NonLinearReconstruction:first}
\end{subfigure}
\hfill
\begin{subfigure}{0.45\textwidth}
    \includegraphics[width=0.95\textwidth]{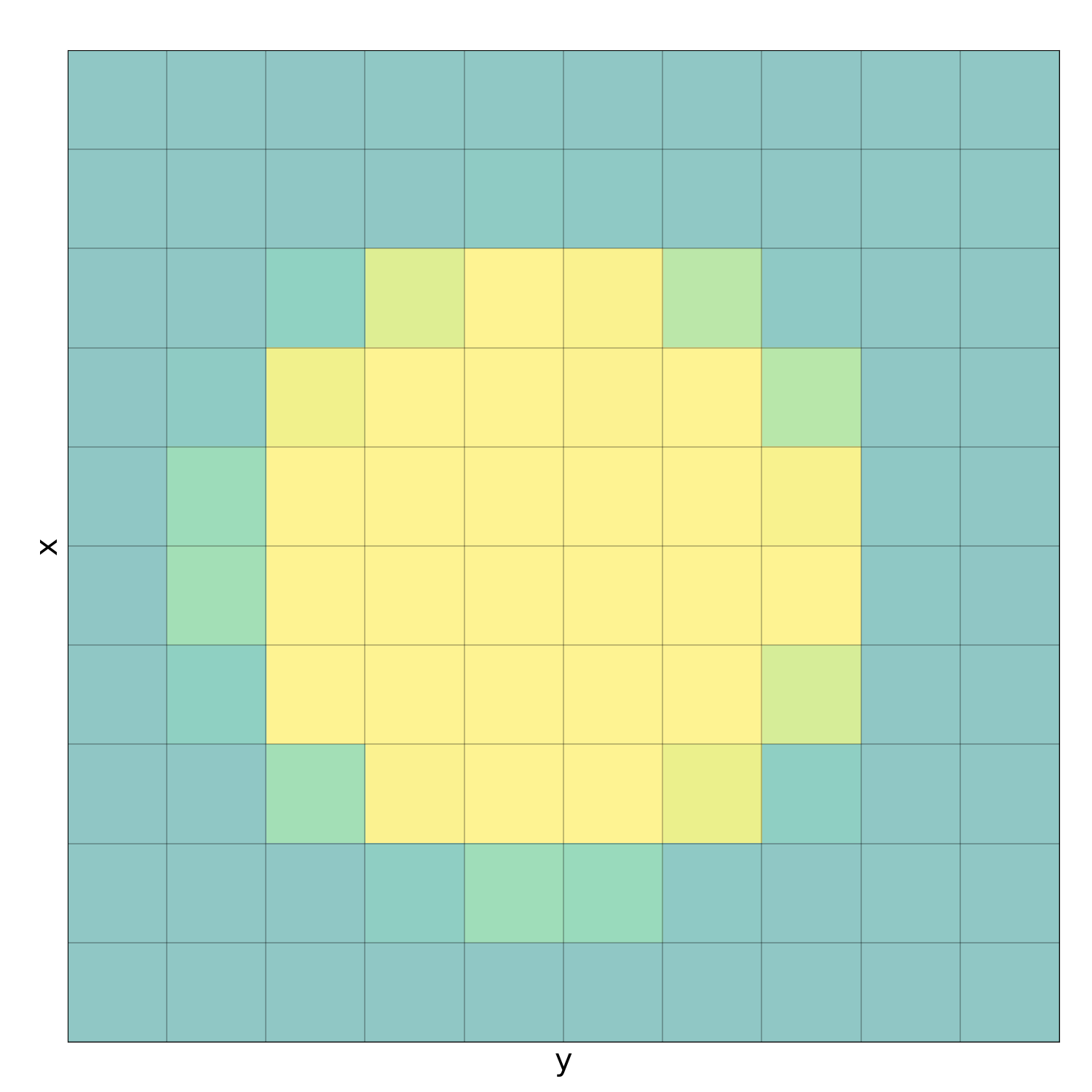}
    \caption{}
    \label{fig:NonLinearReconstruction:second}
\end{subfigure}
\hfill
\begin{subfigure}{0.45\textwidth}
    \includegraphics[width=0.95\textwidth]{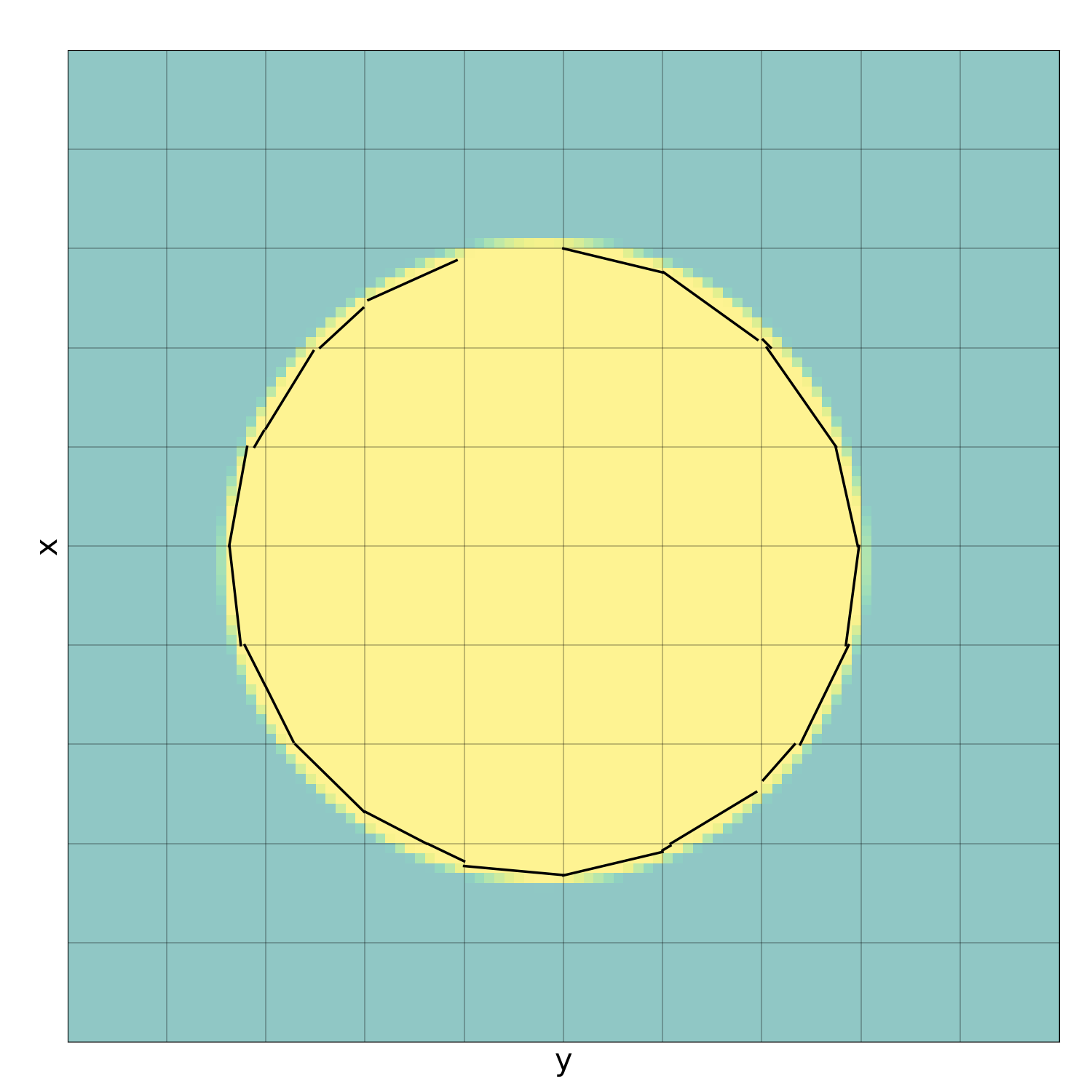}
    \caption{}
    \label{fig:NonLinearReconstruction:third}
\end{subfigure}
\hfill
\begin{subfigure}{0.45\textwidth}
    \includegraphics[width=0.95\textwidth]{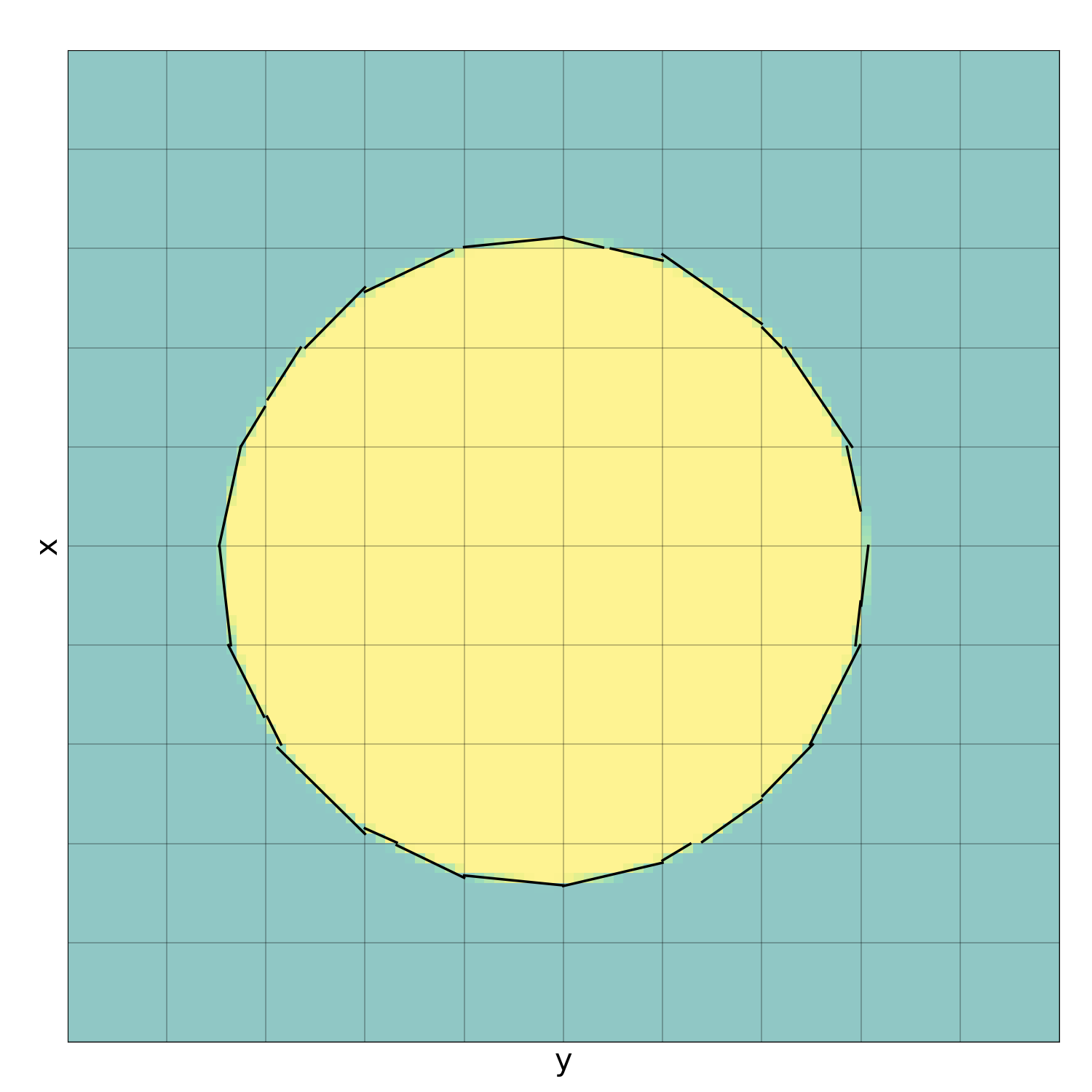}
    \caption{}
    \label{fig:NonLinearReconstruction:fourth}
\end{subfigure}        
\caption{(a) The target function, (b) its recovery by PiecewiseConstant showing the cell-average data,
and the recovered boundaries by (c) LinearInterface
and (d) LinearInterfaceCC methods.}
\label{fig:NonLinearReconstruction}
\end{figure}

Figure \ref{fig:plot_convergence} shows the convergence rates of the three methods in the $L^1$ norm. The expected $h^{-2}$ decay is observed in both non-linear methods while the linear method lays behind with a decay rate of $h^{-1}$. It is relevant to note that although both non-linear methods benefit from the same rate, the associated constants differ by an order of magnitude, showing the practical improvement gained by imposing consistency. 
This improvement is also visible on Figure \ref{fig:NonLinearReconstruction} 
which shows that in the LinearInterface method, the interfaces that minimize the $l_2$ error on the $9$ surrounding cells lay always inside the circle as the curvature of the boundary pushes them towards the center. On the contrary, LinearInterfaceCC seems to find the right compromise between sticking to the cell average while capturing at the same time the curvature trend hinted by the surrounding cell averages.

\section{Relation to compressed sensing} 
\label{section6}

\subsection{Compressed sensing and best $n$-term approximation}

In this section we discuss the application of our setting to
the sparse recovery of large vectors from a few linear observations. We thus take
\[
V=\R^N,
\]
equipped with some given norm $\|\cdot\|_V$ of interest. The linear measurements of $u=(u_1,\dots,u_N)^\top\in \R^N$ are given by
\[
(\ell_1(u),\dots,\ell_m(u))^\top=\Phi u,
\]
where $\Phi$ is an $m\times N$ measurement matrix, with typically $m\ll N$.

The topic of compressed sensing deals with sparse 
recovery of $u$ from such measurements, that is, searching to recover
an accurate approximation to $u$ by a vector with only a few non-zero components.
We refer to \cite{CTR} for some first highly celebrated breakthrough results 
and to \cite{FR} for a general treatment. 

We define the nonlinear space of $n$-sparse vectors as
\[
V_n:=\Big\{u\in \R^N \; : \; \|u\|_0:=\#\{i\; : \; u_i\neq 0\}\leq n\Big\},
\]
and the best $n$-term approximation error in the $V$ norm as
\[
e_n(u)_V:=\min_{v\in V_n} \|u-v\|_V.
\]
One natural question is to understand for which 
type of measurement matrices $\Phi$ does the
noise-free measurement $y=\Phi u$ contain enough information, in order to recover
any $u$ up to an error $e_n(u)_V$. In other words, one asks if there exists a recovery 
map $R: \R^m \to \R^N$ such that one has the {\it instance optimality property}
at order $n$
\be
\|u-R(\Phi u)\|_V\leq C_0 e_n(u)_V, \quad u\in \R^N,
\label{instanceopt}
\ee
with $C_0$ a fixed constant, which we denote by $IOP(n,C_0)$. This question has been answered in \cite{CDD2009} 
in terms of the null space $\cN:=\{v\in \R^N \; :\; \Phi v=0\}$. We say that $\Phi$ satisfies the {\it null space property} 
at order $k$ with constant $C_1$, denoted by $NSP(k,C_1)$ if and only if
\be
\|v\|_V \leq C_1e_k(v)_V, \quad v\in \cN.
\label{nsp}
\ee
This property quantifies how much vectors from the null space can be concentrated on
a few coordinates. One main result of \cite{CDD2009} is the equivalence between $IOP$ at order
$n$ and $NSP$ at order $2n$ in the following sense.

\begin{theorem}
One has $IOP(n,C_0)\Rightarrow NSP(2n,C_0)$ and conversely
$NSP(2n,C_1)\Rightarrow IOP(n,2C_1)$.
\end{theorem}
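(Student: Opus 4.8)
The plan is to prove the two implications separately, each by a direct argument that exploits the definitions of $IOP$ and $NSP$ together with the elementary structure of the $n$-sparse sets $V_n$.

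First I would prove the forward direction $IOP(n,C_0)\Rightarrow NSP(2n,C_0)$. Take any $v\in\cN$, so that $\Phi v=0$. The idea is to split $v$ into two $n$-sparse pieces. Choose a set $\Lambda$ of $2n$ coordinates carrying the best $2n$-term approximation of $v$ (so that $\|v-v_\Lambda\|_V=e_{2n}(v)_V$, writing $v_\Lambda$ for the restriction of $v$ to $\Lambda$), and partition $\Lambda=\Lambda_1\cup\Lambda_2$ into two sets of size $n$. Then $v_{\Lambda_1}$ and $v_{\Lambda_2}$ are each $n$-sparse. Since $\Phi v=0$ we have $\Phi v_{\Lambda_1}=-\Phi(v-v_{\Lambda_1})$, so feeding the measurement $\Phi v_{\Lambda_1}$ into the recovery map and applying $IOP(n,C_0)$ to the vector $-(v-v_{\Lambda_1})$ (whose best $n$-term error is controlled) should let me bound $\|v_{\Lambda_1}\|_V$ or $\|v\|_V$ by a constant times $e_{2n}(v)_V$. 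Concretely, I expect to use that $R(\Phi v_{\Lambda_1})=R(\Phi(-v+v_{\Lambda_1})+\Phi v)=R(-\Phi(v-v_{\Lambda_1}))$ and that $v-v_{\Lambda_1}$ has its $n$ largest entries removed relative to $v$, so a careful bookkeeping yields $\|v\|_V\leq C_0\,e_{2n}(v)_V$, which is exactly $NSP(2n,C_0)$.

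Next I would prove the converse $NSP(2n,C_1)\Rightarrow IOP(n,2C_1)$. Here I must construct a recovery map $R$ achieving instance optimality at order $n$ with constant $2C_1$. The natural candidate is the $V$-norm minimizer consistent with the data,
\[
R(y):=\argmin\{\|w\|_V \;:\; \Phi w=y\},
\]
or, following the best-fit philosophy of the paper, a minimizer of the distance to the $n$-sparse set subject to the measurement constraint. Given $u$ and $y=\Phi u$, set $\hat u=R(y)$, so $\Phi(\hat u-u)=0$, i.e.\ $\hat u-u\in\cN$. Applying $NSP(2n,C_1)$ to $v:=\hat u-u$ gives $\|\hat u-u\|_V\leq C_1 e_{2n}(\hat u-u)_V$. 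The remaining work is to bound $e_{2n}(\hat u-u)_V$ by $e_n(u)_V$: if $w$ is a best $n$-term approximation of $u$ and $\hat w$ a best $n$-term approximation of $\hat u$, then $\hat u-u$ differs from $\hat w-w$ on at most $2n$ coordinates, so $e_{2n}(\hat u-u)_V\leq\|(\hat u-\hat w)-(u-w)\|_V\leq e_n(\hat u)_V+e_n(u)_V$, and the minimizing property of $\hat u$ should force $e_n(\hat u)_V\leq e_n(u)_V$ (or $\|\hat u\|_V\leq\|u\|_V$, yielding a comparable bound). Combining, I expect $\|u-\hat u\|_V\leq C_1(e_n(u)_V+e_n(\hat u)_V)\leq 2C_1\,e_n(u)_V$.

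The main obstacle I anticipate is the bookkeeping that links best $2n$-term errors of a \emph{difference} of vectors to the individual best $n$-term errors, and in particular getting the constants to come out exactly as $C_0$ and $2C_1$ rather than something slightly worse. The support of the optimal $n$-term approximation of $\hat u-u$ need not be the union of the optimal supports of $\hat u$ and $u$, so I will need to argue via the specific competitor $\hat w-w$ (supported on at most $2n$ coordinates) rather than the true $2n$-term optimum, and then invoke optimality of $e_{2n}$ to get the inequality in the favorable direction. Establishing the inequality $e_n(\hat u)_V\leq e_n(u)_V$ from the definition of $R$ is the crux of the converse and must be set up carefully so that the minimizer $R(y)$ is guaranteed to be at least as sparse-approximable as $u$ itself.
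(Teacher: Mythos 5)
The paper states this theorem without proof, quoting it from \cite{CDD2009}, so the benchmark is the standard argument of that reference --- and your sketch follows essentially that route in both directions: splitting the best $2n$-term support of a null space vector into two $n$-sparse halves for the forward implication, and decoding by minimizing the best $n$-term error subject to data consistency for the converse. Two points need tightening. In the forward direction, applying $IOP(n,C_0)$ only to $x:=v_{\Lambda_1}-v$ does not close the argument: you must also apply it to the $n$-sparse vector $v_{\Lambda_1}$ itself, where it forces \emph{exact} recovery, $R(\Phi v_{\Lambda_1})=v_{\Lambda_1}$, because $e_n(v_{\Lambda_1})_V=0$. Since $\Phi x=\Phi v_{\Lambda_1}$, this gives
\[
\|v\|_V=\|v_{\Lambda_1}-x\|_V\leq \|v_{\Lambda_1}-R(\Phi x)\|_V+\|R(\Phi x)-x\|_V\leq C_0\,e_n(x)_V\leq C_0\,e_{2n}(v)_V,
\]
where the last step uses the competitor $-v_{\Lambda_2}$, so that $e_n(x)_V\leq \|v_{\Lambda^c}\|_V=e_{2n}(v)_V$. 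This exact-recovery observation is the missing piece of your ``careful bookkeeping,'' and it is what delivers the constant $C_0$ on the nose.

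In the converse, your hedge between the two candidate decoders is not innocent: the plain $V$-norm minimizer does \emph{not} work in general. The fallback $\|\hat u\|_V\leq\|u\|_V$ only yields $e_n(\hat u)_V\leq\|u\|_V$, and $\|u\|_V$ is not controlled by $e_n(u)_V$, so the resulting bound is not instance optimal ($\ell^1$ minimization is instance optimal for $V=\ell^1$ only under a stronger, two-sided form of the null space property). You must commit to the second candidate, $R(y)\in\argmin\{e_n(w)_V\;:\;\Phi w=y\}$, which is precisely the paper's generalized interpolation estimator of \S 3 with $V_n$ the set of $n$-sparse vectors; then feasibility of $u$ immediately gives $e_n(\hat u)_V\leq e_n(u)_V$, your competitor $\hat w-w\in V_{2n}$ gives $e_{2n}(\hat u-u)_V\leq e_n(\hat u)_V+e_n(u)_V\leq 2e_n(u)_V$, and $NSP(2n,C_1)$ applied to $\hat u-u\in\cN$ yields the constant $2C_1$ exactly as claimed. (If the minimizer fails to exist, a near-minimizer suffices at the cost of a slightly larger constant, as the paper remarks for its best fit estimator.) With these two repairs your proof is correct and coincides with the one in \cite{CDD2009}.
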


One natural question is whether matrices $\Phi$ with such properties can be constructed
with a number of rows/measurements $m$ barely larger than $n$. As we recall
further the answer to this question is strongly tied to the norm $V$ used on $\R^N$.

\subsection{Stability and the null space property}

The nonlinear estimation results that we have obtained in \S \ref{section2} and \S \ref{section3} can be applied to
the setting of sparse recovery, offering us a different vehicle than the null space property 
to establish instance optimality. 

In the present setting, for a given norm $\|\cdot\|_Z$, the stability property \iref{stab1} takes the form
\be
\|\Phi u\|_Z\leq \alpha_Z \|u\|_V, \quad u\in \R^N
\ee
and the inverse stability property \iref{invstab} takes the form
\be
\|v\|_V \leq \mu_Z\|\Phi v\|_Z, \quad v\in V_{2n},
\ee
since for sparse vectors we have $V_n^{\rm diff}=V_n-V_n=V_{2n}$. We refer to these properties
as $S(\alpha_Z)$ and $IS(2n,\mu_Z)$, respectively.

Application of Theorem \ref{theobestfit} in the noiseless case
immediately gives us that the nonlinear best fit recovery $R(\Phi u)=\t u$ satisfies
the instance optimality bound \iref{instanceopt} with constant $C_0=1+2\alpha_Z\mu_Z$.
In other words
\be
S(\alpha_Z)\; {\rm and} \; IS(2n,\mu_Z) \Rightarrow IOP(n,C_0), \quad \quad C_0=1+2\alpha_Z\mu_Z.
\label{SISIOP}
\ee
The following result shows that $(S,IS)$ is actually equivalent to $NSP$, and
thus to $IOS$, in the sense that a converse result holds when $\|\cdot\|_Z$ is chosen to be
the Riesz norm \iref{riesznorm}.

\begin{theorem}
For any norm $\|\cdot\|_Z$, one has 
\be
S(\alpha_Z)\; {\rm and} \; IS(2n,\mu_Z) \Rightarrow NSP(2n,C_1), \quad \quad C_1=1+\alpha_Z\mu_Z.
\label{SISNSP}
\ee
Conversely, let $\|\cdot\|_W$ be the Riesz norm so that $\|\Phi u\|_W=\min_{\Phi v=\Phi u}\|v\|_V$, 
then
\be
NSP(2n,C_1) \Rightarrow S(\alpha_W)\; {\rm and} \; IS(2n,\mu_W) , \quad \quad \alpha_W=1\; {\rm and}\;
\mu_W=1+C_1.
\ee
\end{theorem}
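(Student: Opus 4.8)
For the forward direction, I would establish $S(\alpha_Z)$ and $IS(2n,\mu_Z) \Rightarrow NSP(2n,C_1)$ with $C_1=1+\alpha_Z\mu_Z$. Take any $v\in\cN$, so $\Phi v=0$. The idea is to compare $v$ to its best $2n$-term approximation. Let $v'\in V_{2n}$ achieve $\|v-v'\|_V=e_{2n}(v)_V$. Then $v-v'$ has at most $N$ nonzero entries, but more usefully I would write $v=(v-v')+v'$ and exploit that $v'$ is $2n$-sparse. Since $\Phi v=0$, we have $\Phi v'=-\Phi(v-v')$, so by the inverse stability property $IS(2n,\mu_Z)$ applied to $v'\in V_{2n}$, together with the forward stability $S(\alpha_Z)$:
\[
\|v'\|_V\leq\mu_Z\|\Phi v'\|_Z=\mu_Z\|\Phi(v-v')\|_Z\leq\alpha_Z\mu_Z\|v-v'\|_V=\alpha_Z\mu_Z\, e_{2n}(v)_V.
\]
Then $\|v\|_V\leq\|v-v'\|_V+\|v'\|_V\leq(1+\alpha_Z\mu_Z)e_{2n}(v)_V$, which is exactly $NSP(2n,C_1)$.

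For the converse direction, I would assume $NSP(2n,C_1)$ and derive the two stability properties for the Riesz norm $\|\cdot\|_W$. The claim $\alpha_W=1$ is immediate and essentially definitional: by the defining property of the Riesz norm, $\|\Phi u\|_W=\min_{\Phi v=\Phi u}\|v\|_V\leq\|u\|_V$, so $S(1)$ holds, mirroring the computation $\alpha_W=1$ already carried out in the proof of Theorem \ref{theonorm}. The substantive part is establishing $IS(2n,\mu_W)$ with $\mu_W=1+C_1$. The plan is to take any $v\in V_{2n}$ and bound $\|v\|_V$ in terms of $\|\Phi v\|_W$. By definition of the Riesz norm, choose $w\in V$ with $\Phi w=\Phi v$ and $\|w\|_V=\|\Phi v\|_W$. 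Then $v-w\in\cN$, so I would apply $NSP(2n,C_1)$ to $v-w$, using that $v$ is itself $2n$-sparse to control $e_{2n}(v-w)_V\leq\|(v-w)-v\|_V=\|w\|_V$ (taking $v$ as the competing sparse vector).

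The main obstacle I anticipate is correctly handling the sparsity bookkeeping in the converse so that $\mu_W$ comes out as exactly $1+C_1$ rather than something larger. Concretely, from $NSP(2n,C_1)$ applied to $v-w\in\cN$ with $v\in V_{2n}$ as the approximating sparse vector, I expect
\[
\|v-w\|_V\leq C_1\, e_{2n}(v-w)_V\leq C_1\|w\|_V,
\]
and then the triangle inequality gives $\|v\|_V\leq\|v-w\|_V+\|w\|_V\leq(1+C_1)\|w\|_V=(1+C_1)\|\Phi v\|_W$, yielding $\mu_W=1+C_1$. I would double-check that $v$ is a legitimate competitor in the best $2n$-term approximation of $v-w$; this is where a careless estimate could inflate the constant, so the key is choosing $v$ itself (not some optimally-sparse vector) as the comparison point so the best-approximation error is bounded directly by $\|w\|_V$.
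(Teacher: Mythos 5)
Your proof is correct and follows essentially the same route as the paper's: the forward direction uses the identical triangle inequality combined with $IS(2n,\mu_Z)$ applied to the best $2n$-term approximation and the identity $\Phi v'=-\Phi(v-v')$ for $v\in\cN$, and the converse uses the same minimizer $w$ realizing the Riesz norm, with $v$ itself as the sparse competitor for $v-w\in\cN$, giving exactly $\mu_W=1+C_1$. Your concern about the sparsity bookkeeping is resolved precisely as you anticipated, and in fact the paper makes the same choice of comparison vector.
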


\begin{proof}
Assume that $S(\alpha_Z)$ and $IS(2n,\mu_Z)$ hold. Let $v\in\cN$ and $\tilde v$ its best approximation in $V_{2n}$, then

\begin{align*}
\|v\|_V&\leq \|v-\tilde v\|_V+ \|\tilde v\|_V\\
&\leq e_{2n}(v)_V+ \mu_Z\|\Phi \tilde v\|_W\\
&=e_{2n}(v)_V+ \mu_Z\|\Phi (v-\tilde v)\|_W \leq (1+\alpha_Z\mu_Z)e_{2n}(x)_V.
\end{align*}
This shows that $NSP(2n,C_1)$ holds with $C_1=1+\alpha_Z\mu_Z$.

Conversely, assume that $NSP(2n,C_1)$ holds. From the definition of the Riesz norm, it is
immediate that $S(\alpha_W)$ holds with $\alpha_W=1$. For $v\in V_{2n}$, let $\t v$ be the 
minimizer of $\min_{\Phi \t v=\Phi v}\|\t v\|_V$. Then, one has
\[
\|v\|_V\leq \|\t v\|_V+\|v-\t v\|_V\leq \|\t v\|_V+C_1\sigma_{2n}(v-\t v)_V\leq (1+C_1)\|\t v\|_V,
\]
by using $v$ as a sparse approximation to $v-\t v$. Since $\|\t v\|_V=\|\Phi v\|_W$ , this shows that $S(2n,\mu_W)$ holds with $\mu_W=1+C_1$.
\end{proof}

\subsection{The case of $\ell^p$ norms}

The range of $m$ allowing the properties to be fulfilled is best understood in the case of 
the $\ell^p$ norms, that is $\|\cdot \|_V=\|\cdot\|_p$, as discussed in \cite{CDD2009} which
points out a striking difference between the case $p=2$ and $p=1$:
\begin{enumerate}
\item
In the case $p=2$, it is proved that $NSP(2,C_1)$ cannot hold unless $N\leq C_1^2m$.
In other words, instance optimality in $\ell^2$ even at order $n=1$ requires a number
of measurements that is proportional to the full space dimension. 
\item
In the more favorable case $p=1$, it is proved that for matrices 
which satisfy the $\ell^2$-RIP property of order $3n$ 
$$
(1-\delta) \|v\|_2^2 \leq \|\Phi v\|_2^2 \leq (1+\delta) \|v\|^2_2, \quad v\in V_{3n},
$$ 
with parameter $0<\delta<\frac{(\sqrt 2-1)^2}3$, the $NSP(2n,C_1)$ holds with $C_1$ depending on $\delta$. Such matrices are known to exists with $m\sim n\log(N/n)$ rows. 
\end{enumerate}

Our setting based on the stability properties $S$ and $IS$ applies more naturally 
to a different class of matrices built from graphs, which is also known to be well adapted
for sparse recovery in the $\ell^1$ norm. A bipartite graph with $(N,m)$ left and 
right vertices, and of  left degree $d$, is an $(l,\eps)$-graph expander if
\[
 |X|\leq l \Rightarrow |N(X)| \geq d(1-\eps)|X|,\quad   X\subset \{1,\dots,N\},
\]
where $N(X)\subset \{1,\dots,m\}$ is the set of vertices connected to $X$. We necessarily have 
$|N(X)|\leq d|X|$, and $(1-\eps)dl\geq m$. From \cite{CRVW}, it is known that  there exists a $(2n, \frac{1}{2})$-graph expander with $d\sim \log \frac{N}{n}$ and $m\sim nd\sim n\log(N/n)$.

Now denote $\Phi\in \{0,1\}^{m\times N}$ the adjacency matrix of this graph, so that each column of $\Phi$ has $d$ nonzero entries. Then
\[
\|\Phi x\|_1\leq d\|x\|_1,\quad x\in \R^N,
\]
and
\[
\|\Phi x\|_1\geq d(1-\eps)\|x\|_1,\quad x\in V_{2n}.
\]
Therefore $S(\alpha_1)$ and $IS(2n,\mu_1)$, hold with $\alpha_1=d$ and $\mu_1=\frac{1}{d(1-\eps)}=\frac 2 d$, which 
by \iref{SISNSP} and \iref{SISIOP} gives $NSP(2n,C_1)$ with $C_1=3$ and $IOP(n,C_0)$ with $C_0=5$.

\section{Appendix: Proof of Proposition \ref{prop stencil}} 
\begin{figure}[ht]
\begin{center}
\includegraphics[scale=0.5]{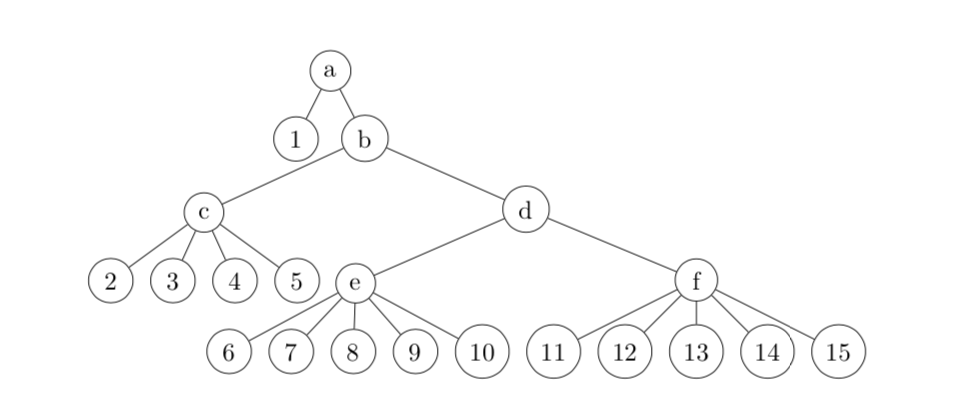}
\caption{Structure of the proof, each leaf corresponds to a different case, and each node contains a general treatment valid for all its sons}
\label{tree}
\end{center}
\end{figure}

The proof contains 15 cases, represented on a tree in Figure \ref{tree}. These cases
correspond to different geometric situations, up to certain symmetries that leave the final relevant quantities $\|\ell(w)\|_1$
and $\|w\|_{L^1(S)}$ unchanged.
\nl
\nl
{\bf Node a:} Take $w=u-v\in V_{2,T}^{\rm diff}$, with $u,v\in V_{2,T}$,
and denote $\vec n_u$, $\vec n_v$ and $c_u$, $c_v$ the corresponding unit vectors and offsets from the definition \ref{def VT}
of $V_{2,T}$. Recalling that $\o x=(\o x_1,\o x_2)$ is the center of $S$, we also denote
\[
\Delta_u=\{x\in \R^2, (x-\o x)\cdot \vec n_u=c_u\}
\]
the delimiting line between $\{u=0\}$ and $\{u=1\}$, and define $\Delta_v$ in a similar way.
\nl
\nl
{\bf Case 1:} If $\vec n_u=\vec n_v=\vec n$, we have
\[
w=
\begin{cases}
\Chi_{c_u\leq \vec n\cdot (x-\o x)< c_v}&\text{ if }c_u\leq c_v\\
-\Chi_{c_v\leq \vec n\cdot (x-\o x)< c_u}&\text{ otherwise}
\end{cases}
\]
so $w$ has constant sign, which implies $\|w\|_{L^1(S)}=h^{2}\|\ell(w)\|_1$.
\nl

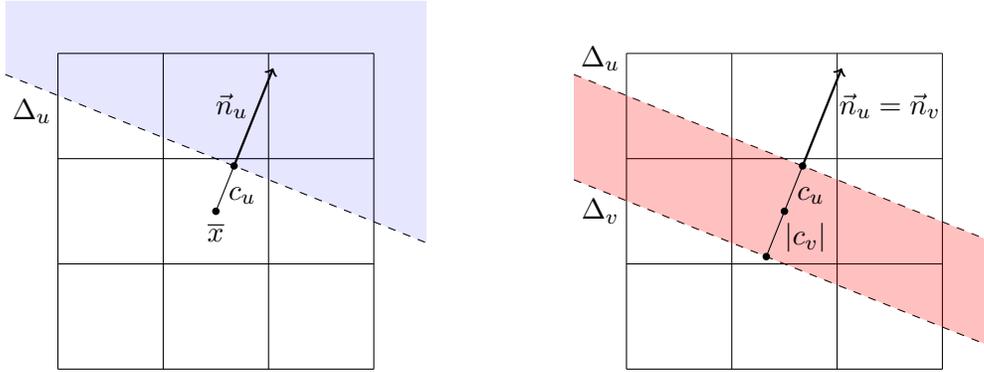
\begin{figure}[ht]
\begin{center}
\hspace{-3.5cm}
\begin{minipage}{.2\textwidth}
\begin{tikzpicture}[scale=1.4]
  \draw(-1.5,-1.5)--(-1.5,1.5);
  \draw(-0.5,-1.5)--(-0.5,1.5);
  \draw(0.5,-1.5)--(0.5,1.5);
  \draw(1.5,-1.5)--(1.5,1.5);
  \draw(-1.5,-1.5)--(1.5,-1.5);
  \draw(-1.5,-0.5)--(1.5,-0.5);
  \draw(-1.5,0.5)--(1.5,0.5);
  \draw(-1.5,1.5)--(1.5,1.5);
  \draw[dashed](-2,1.3)--(2,-0.3);
  \draw[draw=none, fill=blue, fill opacity=0.1](-2,1.3)--(2,-0.3)--(2,2)--(-2,2)--cycle;
  \draw(0,0)--(5/29,25/58);
  \node at (0.25,0.15){$c_u$};
  \draw[->, line width=0.3mm](5/29,25/58)--(5/29+4/10.77,25/58+10/10.77);
  \node at (0.15,1){${ \vec n_u}$};
  \node at (0,0) [circle,fill,inner sep=1pt]{};
  \node at (0.5/2.9,0.5/2.9*2.5) [circle,fill,inner sep=1pt]{};
  \node at (0,-0.2){$\o x$};
  \node at (-1.75,0.95){$\Delta_u$};
\end{tikzpicture}
\end{minipage}
\hspace{4cm}
\begin{minipage}{.2\textwidth}
\begin{tikzpicture}[scale=1.4]
  \draw(-1.5,-1.5)--(-1.5,1.5);
  \draw(-0.5,-1.5)--(-0.5,1.5);
  \draw(0.5,-1.5)--(0.5,1.5);
  \draw(1.5,-1.5)--(1.5,1.5);
  \draw(-1.5,-1.5)--(1.5,-1.5);
  \draw(-1.5,-0.5)--(1.5,-0.5);
  \draw(-1.5,0.5)--(1.5,0.5);
  \draw(-1.5,1.5)--(1.5,1.5);
  \draw[dashed](-2,1.3)--(2,-0.3);
   \draw[dashed](-2,0.3)--(2,-1.3);
  \draw[draw=none, fill=white, fill opacity=0.1](-2,1.3)--(2,-0.3)--(2,2)--(-2,2)--cycle;
  \draw[draw=none, fill=red, fill opacity=0.25](-2,1.3)--(2,-0.3)--(2,-1.3)--(-2,0.3)--cycle;
  \draw(-5/29,-25/58)--(5/29,25/58);
  \node at (0.25,0.15){$c_u$};
  \node at (0.2,-0.25){$|c_v|$};
  \draw[->, line width=0.3mm](5/29,25/58)--(5/29+4/10.77,25/58+10/10.77);
  \node at (1,1){${ \vec n_u=\vec n_v}$};
  \node at (0,0) [circle,fill,inner sep=1pt]{};
  \node at (0.5/2.9,0.5/2.9*2.5) [circle,fill,inner sep=1pt]{};
  \node at (-0.5/2.9,-0.5/2.9*2.5) [circle,fill,inner sep=1pt]{};
  \node at (-1.75,1.45){$\Delta_u$};
  \node at (-1.75,0){$\Delta_v$};
\end{tikzpicture}
\end{minipage}
\end{center}
\caption{Left: $3\times 3$ stencil $S$, with $\o x$ its center, and an example of function $u\in V_{2,T}$ with directing vector $\vec n_u$ and offset $c_u>0$.
Here the dotted line corresponds to $\Delta_u$, and the shaded region to $u=1$, while $u=0$ elsewhere.
Right: Representation of Case 1 ($\vec n_u=\vec n_v$), here $c_v <0 <c_u$ so $w=-1$ on the shaded region and $w=0$ elsewhere}
\end{figure}

\noindent
{\bf Node b:} In all other cases, the cones
\[
\cC_+=\{x\in \R^2,\, w(x)=1\}\quad \text{and}\quad \cC_-=\{x\in \R^2,\, w(x)=-1\}
\]
are non-empty, and we can define the external bisector
\[
\Delta=\{x\in \R^2,\,(\vec n_u-\vec n_v)\cdot(x-\o x)=c_u-c_v\},
\]
which is the line of symmetry between $\cC_+$ and $\cC_-$. We also denote
\[
\cC=\cC_+\cup \cC_-=\{x\in \R^2, |w(x)|=1\}.
\]
Observing that
\be
\|w\|_{L^1(S)}=|S\cap \cC|
\ee
and
\be
\|\ell(w)\|_1=h^{-2}{\sum_{T\subset S}}\Big| |T\cap\cC_+|-|T\cap \cC_-|\Big|,
\ee
the stability property \iref{mu stencil}
can be rewritten as
\[
|S\cap \cC| \leq \frac{3}{2}\sum_{T\subset S}\Big| |T\cap\cC_+|-|T\cap \cC_-|\Big|=\frac{3}{2}\left(|S\cap \cC| - 2\sum_{T\subset S}\min(|T\cap\cC_+|,|T\cap\cC_-|) \right),
\]
or equivalently
\begin{equation}
|S\cap \cC|\geq 6\sum_{T\subset S}\min(|T\cap\cC_+|,|T\cap\cC_-|).
\label{geometric condition}
\end{equation}
Up to a rotation of $S$ by a multiple of $\frac{\pi}{2}$, we may
assume without loss of generality that
\[
\arg(\vec n_u-\vec n_v)\in\left[\frac{\pi}4,\frac{3\pi}4\right],
\]
that is, $\Delta$ is at an angle of at most $\frac{\pi}{4}$ with the horizontal axis, and $\cC_+$ lies above $\Delta$.
Take $(\vec e_1,\vec e_2)$ the canonical basis of $\R^2$.
\nl
\nl
{\bf Node c:} Consider the situation where $(\vec n_u\cdot \vec e_2)(\vec n_v\cdot \vec e_2)>0$. As $\vec n_u\neq \vec n_v$ and $\vec n_u\neq -\vec n_v$, the lines $\Delta_u$ and $\Delta_v$ intersect at one point $X\in \R^2$.
Moreover, the above condition implies $X+\vec e_2\notin \cC$. Using the fact that $|\arg(\Delta)|\leq\frac{\pi}{4}$, we also get $X+\vec e_1\notin \cC$.

Up to a symmetry with respect to the vertical axis, we can assume that $\cC_+$ is included in the quadrant $X+\R_+^2$. Now consider a cell $T\subset S$ such that $\min(|T\cap \cC_+|,|T\cap \cC_-|)\neq0$, then there exist points $x\in T\cap \cC_-$ and $y\in T\cap \cC_+$. As $x_1 \leq X_1 \leq y_1$ and $x_2\leq X_2\leq y_2$, we get $X\in T$, so there is at most one such cell $T$, and
inequality \eqref{geometric condition} reduces to
\[
|S\cap \cC|\geq 6\min(|T\cap \cC_+|,|T\cap \cC_-|).
\]

\begin{figure}[ht]
\begin{center}
\hspace{-3.5cm}
\begin{minipage}{.2\textwidth}
\begin{tikzpicture}[scale=1.4]
  \draw(-1.5,-1.5)--(-1.5,1.5);
  \draw(-0.5,-1.5)--(-0.5,1.5);
  \draw(0.5,-1.5)--(0.5,1.5);
  \draw(1.5,-1.5)--(1.5,1.5);
  \draw(-1.5,-1.5)--(1.5,-1.5);
  \draw(-1.5,-0.5)--(1.5,-0.5);
  \draw(-1.5,0.5)--(1.5,0.5);
  \draw(-1.5,1.5)--(1.5,1.5);
  \draw[dashed](-1.8,-2)--(2,1.04);
  \draw[dashed](-1.65,-2)--(0.35,2);
  \draw[draw=none](-2,-2)--(-2,2);
  \draw[draw=none, fill=blue, fill opacity=0.1](-1.55,-1.8)--(2,1.04)--(2,2)--(0.35,2)--cycle;
  \draw[draw=none, fill=red, fill opacity=0.25](-1.55,-1.8)--(-1.8,-2)--(-1.65,-2)--cycle;
  \node at (-1.55,-1.8) [circle,fill,inner sep=1pt]{};
  \node at (-1.75,-1.7){$X$};
  \node at (1.3,1.75){$\cC_+$};
  \node at (-0.1,1.75){$\Delta_u$};
  \node at (1.75,0.5){$\Delta_v$};
\end{tikzpicture}
\end{minipage}
\hspace{4cm}
\begin{minipage}{.2\textwidth}
\begin{tikzpicture}[scale=1.4]
  \draw(-1.5,-1.5)--(-1.5,1.5);
  \draw(-0.5,-1.5)--(-0.5,1.5);
  \draw(0.5,-1.5)--(0.5,1.5);
  \draw(1.5,-1.5)--(1.5,1.5);
  \draw(-1.5,-1.5)--(1.5,-1.5);
  \draw(-1.5,-0.5)--(1.5,-0.5);
  \draw(-1.5,0.5)--(1.5,0.5);
  \draw(-1.5,1.5)--(1.5,1.5);
  \draw[dashed](-2,-0.9)--(2,0.7);
  \draw[dashed](-0.38,-2)--(0.42,2);
  \draw[draw=none, fill=blue, fill opacity=0.1](0,-0.1)--(2,0.7)--(2,2)--(0.42,2)--cycle;
  \draw[draw=none, fill=red, fill opacity=0.25](0,-0.1)--(-2,-0.9)--(-2,-2)--(-0.38,-2)--cycle;
  \node at (0,-0.1) [circle,fill,inner sep=1pt]{};
  \node at (0.2,-0.3){$X$};
  \node at (-1.1,-1.75){$\cC_-$};
  \node at (1.4,1.75){$\cC_+$};
  \draw[ line width=0.3mm](-1,-0.9)--(1,-0.9);
  \draw[ line width=0.3mm](-1,-0.9)--(-1,1.1);
  \draw[ line width=0.3mm](-1,1.1)--(1,1.1);
  \draw[ line width=0.3mm](1,-0.9)--(1,1.1);
  \draw[ line width=0.3mm](0.5,-0.5)--(0.5,0.5);
  \draw[ line width=0.3mm](-0.5,0.5)--(0.5,0.5);
  \draw[ line width=0.3mm](-0.5,-0.5)--(-0.5,0.5);
  \draw[ line width=0.3mm](-0.5,-0.5)--(0.5,-0.5);
\end{tikzpicture}
\end{minipage}

\hspace{-3.5cm}
\begin{minipage}{.2\textwidth}
\begin{tikzpicture}[scale=1.4]
  \draw(-1.5,-1.5)--(-1.5,1.5);
  \draw(-0.5,-1.5)--(-0.5,1.5);
  \draw(0.5,-1.5)--(0.5,1.5);
  \draw(1.5,-1.5)--(1.5,1.5);
  \draw(-1.5,-1.5)--(1.5,-1.5);
  \draw(-1.5,-0.5)--(1.5,-0.5);
  \draw(-1.5,0.5)--(1.5,0.5);
  \draw(-1.5,1.5)--(1.5,1.5);
  \draw[dashed](-2,-1.75)--(2,-1.75+20/9);
   \draw[dashed](-1.4,-2)--(0.2,2);
  \draw[draw=none, fill=blue, fill opacity=0.1](-1.1,-1.25)--(2,-1.75+20/9)--(2,2)--(0.2,2)--cycle;
  \draw[draw=none, fill=red, fill opacity=0.25](-1.1,-1.25)--(-2,-1.75)--(-2,-2)--(-1.4,-2)--cycle;
  \node at (-1.1,-1.25) [circle,fill,inner sep=1pt]{};
  \node at (-1.3,-1.1){$X$};
  \node at (-1.55,-1.75){$\cC_-$};
  \node at (1.3,1.75){$\cC_+$};
  \draw[ line width=0.3mm](-1.1,-1.25)--(0.7,-1.25);
  \draw[ line width=0.3mm](-1.1,-1.25)--(-1.1,1);
  \draw[ line width=0.3mm](0.7,-1.25)--(0.7,1);
  \draw[ line width=0.3mm](-1.1,1)--(0.7,1);
  \draw[ line width=0.3mm](-0.5,-1.25)--(-0.5,-0.5);
  \draw[ line width=0.3mm](-1.1,-0.5)--(-0.5,-0.5);
\end{tikzpicture}
\end{minipage}
\hspace{4cm}
\begin{minipage}{.2\textwidth}
\begin{tikzpicture}[scale=1.4]
  \draw(-1.5,-1.5)--(-1.5,1.5);
  \draw(-0.5,-1.5)--(-0.5,1.5);
  \draw(0.5,-1.5)--(0.5,1.5);
  \draw(1.5,-1.5)--(1.5,1.5);
  \draw(-1.5,-1.5)--(1.5,-1.5);
  \draw(-1.5,-0.5)--(1.5,-0.5);
  \draw(-1.5,0.5)--(1.5,0.5);
  \draw(-1.5,1.5)--(1.5,1.5);
  \draw[dashed](-2,-1.7)--(2, 0.3);
  \draw[dashed](-0.5,-2)--(0.78,2);
  \draw[draw=none, fill=blue, fill opacity=0.1](-0.1,-0.75)--(2, 0.3)--(2,2)--(0.78,2)--cycle;
  \draw[draw=none, fill=red, fill opacity=0.25](-0.1,-0.75)--(-2,-1.7)--(-2,-2)--(-0.5,-2)--cycle;
  \node at (-0.1,-0.75) [circle,fill,inner sep=1pt]{};
  \draw[ line width=0.3mm](-0.02,-0.5)--(-0.02+2/3,0.5);
  \draw[ line width=0.3mm](0.4,-0.5)--(0.4+2/3,0.5);
  \draw[ line width=0.3mm](-0.02,-0.5)--(0.4,-0.5);
  \draw[ line width=0.3mm](-0.02+2/3,0.5)--(0.4+2/3,0.5);
  \draw(-0.1,-0.75)--(-0.1,-0.5);
  \node at (-0.2,-0.625){$z$};
  \node at (0.3,-0.3){$l$};
  \node at (0.05,-1.2){$T$};
  \node at (-0.05,0.1){$\o T$};
  \draw[dotted](-0.1,-0.75)--(-0.1+35/24,-2);
  \draw[dotted](-0.3,-0.75+1.2/7)--(-2,-0.75+1.9*6/7);
  \node at (-1.75,0.5){$\Delta$};
\end{tikzpicture}
\end{minipage}
\end{center}
\caption{Cases $2$, $3$, $4$, and $5$}
\end{figure}
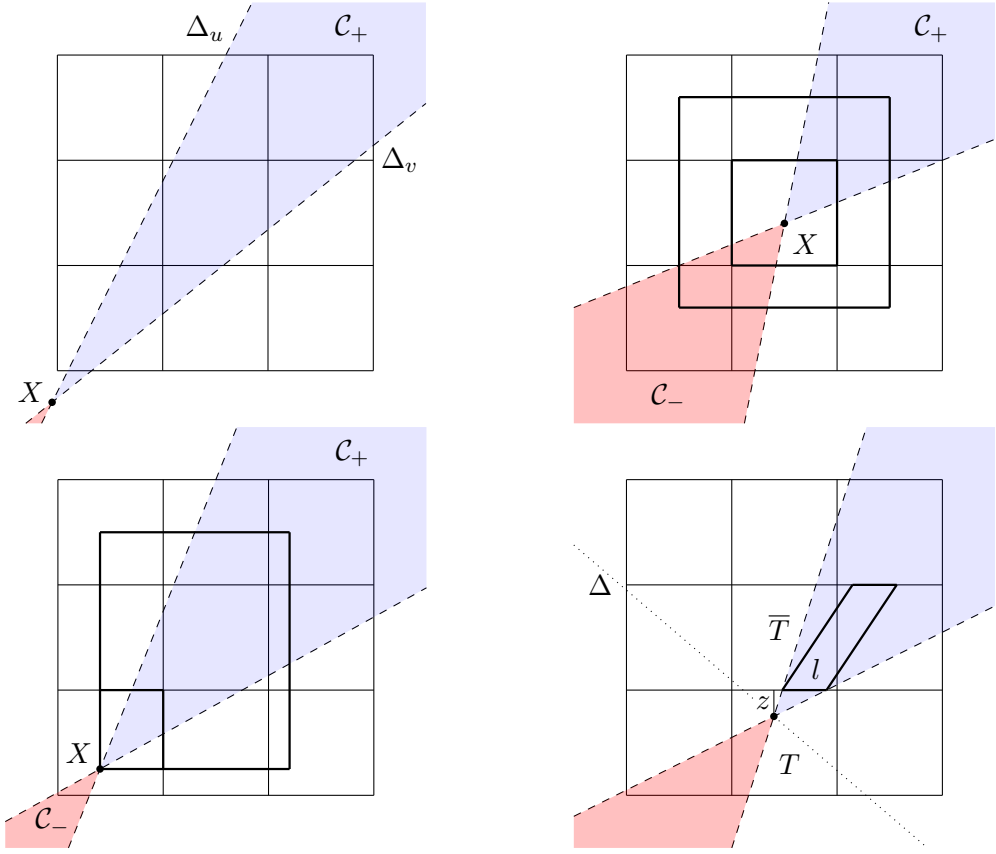

\noindent
{\bf Case 2:} If $X\notin S$, then $w$ has constant sign on $S$, so $\|w\|_{L^1(S)}=h^{2}\|\ell(w)\|_1$.
\newline
\newline
{\bf Case 3:} If $X$ is in the central cell $T$, the dilation of $T$ with respect to $X$ by a factor $2$ is a subset of $S$, and the image of $\cC\cap T$ is in $\cC\cap S$, so
\[
|S\cap \cC|\geq 4 |T\cap \cC|\geq 8\min(|T\cap \cC_+|,|T\cap \cC_-|).
\]
\noindent
{\bf Case 4:} If $X$ is in the lower left cell $T$, the dilation of $T\cap \cC_+$ with respect to $X$ by a factor $3$  is in $S\cap \cC_+$, so
\[
 |S\cap \cC|\geq  |S\cap \cC_+|\geq 9|T\cap \cC_+|\geq 9\min(|T\cap \cC_+|,|T\cap \cC_-|).
\]
The same argument holds with $\cC_-$ instead of $\cC_+$ when $X$ is in the upper right cell. Moreover, as $\Delta_u$ and $\Delta_v$ go through the central cell, $X$ may not be in the upper left or lower right cells.
\nl
\nl
{\bf Case 5:} If $X$ is in the lower central cell $T$, denote $l=|\partial T\cap \cC_+|\in (0,h)$ the distance between $\Delta_u$ and $\Delta_v$ when they pass from $T$ to the central cell $\o T$, and $z={\rm dist}(X, \o T)\in(0,h)$ the depth of the point of intersection. Then
\[
|T\cap \cC_+|=\frac{zl}{2}\quad \text{and}\quad |T\cap \cC_-|\leq \frac{zl}{2}\left(\frac{h-z}{z}\right)^2,
\]
so $\min(|T\cap \cC_+|,  |T\cap \cC_-|)\leq \frac{hl}{4}$. On the other hand, the parallelogram of base $\partial T\cap \cC_+$, of height $h$, and with sides orthogonal to $\Delta$ belongs to $(S\setminus T)\cap \cC_+$ (it does not escape to the right of $S$ because $\Delta$ is close to the horizontal axis, so the sides of the parallelogram are at an angle at most $\frac{\pi}{4}$ with the vertical axis), and has an area $hl$, which proves that
\[
|\cC\cap S|\geq hl+|\cC_+\cap T|+|\cC_-\cap T|\geq 6\min(|T\cap \cC_+|,  |T\cap \cC_-|).
\]
A similar construction can be applied to the remaining cases where $X$ is in the upper central, central left or central right cell, which concludes the proof for Node c.
\nl
\nl
{\bf Node d:} If now $(\vec n_u\cdot \vec e_2)(\vec n_v\cdot \vec e_2)\leq 0$, as $\arg(\vec n_u-\vec n_v)\in\left[\frac{\pi}4,\frac{3\pi}4\right]$, we get $\vec n_u\cdot \vec e_2\geq 0 \geq \vec n_v\cdot \vec e_2$. Observe that $\cC_++\vec e_2\subset \cC_+$ since for all $x\in \cC_+$,
\[
(x+\vec e_2-\o x)\cdot \vec n_u \geq (x-\o x)\cdot \vec n_u \geq c_u\quad \text{and}\quad (x+\vec e_2-\o x)\cdot \vec n_v \leq (x-\o x)\cdot \vec n_v < c_v.
\]
In the same way, $\cC_--\vec e_2\subset \cC_-$. We now divide $S$ into columns separated by the vertical boundaries between cells, and 
in addition by vertical lines where $\Delta$ intersects the two horizontal lines separating cells of $S$, as illustrated in Figure \ref{cones contain the vertical axis}.

\begin{figure}[ht]
\begin{center}
\hspace{-2cm}
\begin{minipage}{.2\textwidth}
\begin{tikzpicture}[scale=1.4]
  \draw(-1.5,-1.5)--(-1.5,1.5);
  \draw(-0.5,-1.5)--(-0.5,1.5);
  \draw(0.5,-1.5)--(0.5,1.5);
  \draw(1.5,-1.5)--(1.5,1.5);
  \draw(-1.5,-1.5)--(1.5,-1.5);
  \draw(-1.5,-0.5)--(1.5,-0.5);
  \draw(-1.5,0.5)--(1.5,0.5);
  \draw(-1.5,1.5)--(1.5,1.5);
  \draw[dashed](-2,1.3)--(2,-0.3);
  \draw[dashed](0.1,-2)--(0.9,2);
  \draw[draw=none, fill=blue, fill opacity=0.1](-2,1.3)--(30/54, 15/54)--(0.9,2)--(-2,2)--cycle;
  \draw[draw=none, fill=red, fill opacity=0.25](2,-2)--(0.1,-2)--(30/54, 15/54)--(2,-0.3)--cycle;
  \node at (30/54, 15/54) [circle,fill,inner sep=1pt]{};
  \draw[->, line width=0.3mm](30/54, 15/54)--(30/54+0.4*0.92848, 15/54+0.92848);
  \draw[->, line width=0.3mm](30/54, 15/54)--(30/54+0.98058, 15/54-0.2*0.98058);
  \draw[->, line width=0.3mm](30/54, 15/54)--(30/54+0.4*0.92848-0.98058, 15/54+0.92848+0.2*0.98058);
  \node at (-1.75,-0.6){$\Delta$};
  \node at (-0.25,1.75){$\cC_+$};
  \node at (1.1,-1.75){$\cC_-$};
  \node at (1.75,0.2){$\vec n_v$};
  \node at (1.2,1.2){$\vec n_u$};
  \node at (-0.45,1.1){$\vec n_u-\vec n_v$};
  \draw[dotted](-2,-1.106562)--(2,1.060232);
  \end{tikzpicture}
\end{minipage}
\hspace{4cm}
\begin{minipage}{.2\textwidth}
\begin{tikzpicture}[scale=1.4]
  \draw[line width=0.4mm](-1.5,-1.5)--(-1.5,1.5);
  \draw[line width=0.4mm](-0.5,-1.5)--(-0.5,1.5);
  \draw[line width=0.4mm](0.5,-1.5)--(0.5,1.5);
  \draw[line width=0.4mm](1.5,-1.5)--(1.5,1.5);
  \draw(-1.5,-1.5)--(1.5,-1.5);
  \draw(-1.5,-0.5)--(1.5,-0.5);
  \draw(-1.5,0.5)--(1.5,0.5);
  \draw(-1.5,1.5)--(1.5,1.5);
  \draw[dashed](-2,1.3)--(2,-0.3);
  \draw[dashed](0.1,-2)--(0.9,2);
  \draw[draw=none, fill=blue, fill opacity=0.1](-2,1.3)--(30/54, 15/54)--(0.9,2)--(-2,2)--cycle;
  \draw[draw=none, fill=red, fill opacity=0.25](2,-2)--(0.1,-2)--(30/54, 15/54)--(2,-0.3)--cycle;
  \draw[line width=0.4mm](-2+1.11974,-1.5)--(-2+1.11974,1.5);
  \draw[line width=0.4mm](0.965786,-1.5)--(0.965786,1.5);
  \draw[dotted](-2,-1.106562)--(2,1.060232);
  \draw(-1.5, -1.106562+2.1667/8)--(-1.2189,-1.0188);
  \draw(-1.2189,-1.0188)--(-0.6726,-0.1812);
   \draw(-0.6726,-0.1812)--(-0.5, -1.106562+2.1667*3/8);
   \draw(0.5, -1.106562+2.1667*5/8)--(0.7112,0.1101);
   \draw(0.7112,0.1101)--(1.25747,0.9477);
   \draw(1.25747,0.9477)--(1.5, -1.106562+2.1667*7/8);
  \end{tikzpicture}\end{minipage}
\end{center}
\caption{Generic situation for Node d, and partition of $S$ into $5$ columns: here, in addition to the $4$ vertical lines delimiting the cells of $S$, we added $2$ vertical lines passing through the intersections of $\Delta$ with the $2$ horizontal cell delimiters}
\label{cones contain the vertical axis}
\end{figure}
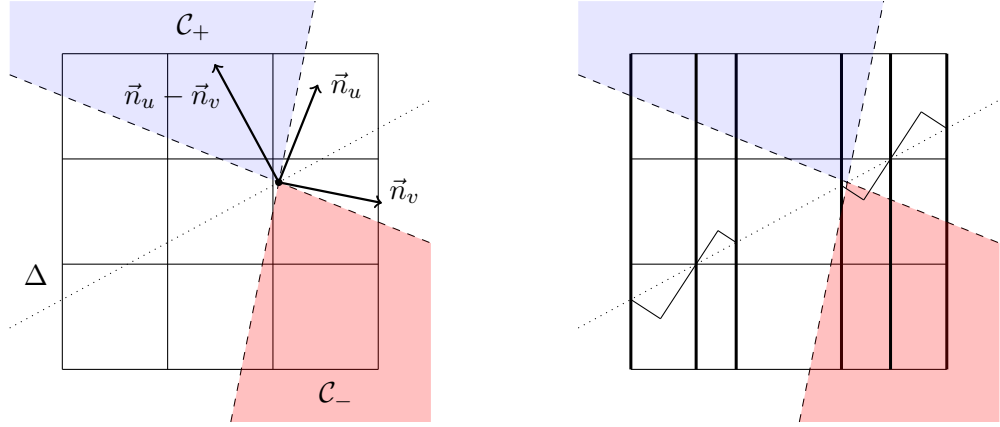

Let $U$ be such a column, and $T$ a cell intersecting $U$. If $T\cap U\neq T$, $\Delta$ intersects either the upper or lower boundary of $T$, but not both since $\Delta$ is at an angle of at most $\frac{\pi}{4}$ with the horizontal axis. If it is the upper boundary, the symmetric of the part of $T\cap U$ above $\Delta$ with respect to $\Delta$ is in $T\cap U$. If it is the lower boundary, the symmetric of the part of $T\cap U$ below $\Delta$ with respect to $\Delta$ is in $T\cap U$. Using the fact that 
$\cC_+$ and $\cC_-$ are symmetric with respect to $\Delta$, we obtain 
\[
\min(|T\cap\cC_+|,|T\cap\cC_-|)=\min(|T\cap U\cap \cC_+|,|T\cap U\cap\cC_-|)+\min(|T\cap U^c\cap \cC_+|,|T\cap U^c\cap\cC_-|).
\]
Thanks to this observation, instead of \eqref{geometric condition} we only have to prove the inequality
\begin{equation}
|U\cap \cC|\geq 6\sum_{T\subset U}\min(|T\cap U\cap \cC_+|,|T\cap U\cap \cC_-|)
\label{column}
\end{equation}
on each column $U$ separately. We thus consider only one column $U$ in the sequel, and assume up to a horizontal dilation (which preserves the condition $|\arg(\Delta)|\leq\frac \pi 4$) that $U$ has width $h$ and is composed of three full cells.

According to the definition of the columns, there is at most one cell $T\subset U$ such that $T\cap \Delta\neq\emptyset$, and as $\Delta$ separates $\cC_+$ and $\cC_-$, it is only for this cell that we may have $\min(|T\cap\cC_+|,|T\cap \cC_-|)\neq0$. If there is no such cell, \eqref{column} trivially holds.
Otherwise, similar to Node c, we only need to prove
\[
|U\cap \cC|\geq 6\min(|T\cap \cC_+|,|T\cap \cC_-|),
\]
where $T\subset U$ is the cell containing $\Delta\cap U$. Denoting $P_1$, $P_2$, $P_3$ and $P_4$ the upper left, upper right, lower left and lower right corner points of $T$, we observe that the assumptions on $\Delta$ and $U$ imply $P_1,P_2\notin \mathring\cC_-$ and $P_3,P_4\notin \mathring\cC_+$.
\nl
\nl
{\bf Node e:} If $U\cap \Delta_u\cap \Delta_v=\emptyset$, that is, if $U$ contains no intersection point between $\Delta_u$ and $\Delta_v$, we match $5$ cases depending on the position of $T$ in $U$, and of its corners with respect to $\cC$. They are illustrated in Figure \ref{node e}.

\begin{figure}[ht]
\begin{center}
\begin{minipage}{.2\textwidth}
\begin{tikzpicture}[scale=1.4]
  \draw(-0.5,-1.5)--(-0.5,1.5);
  \draw(0.5,-1.5)--(0.5,1.5);
  \draw(-0.5,-1.5)--(0.5,-1.5);
  \draw(-0.5,-0.5)--(0.5,-0.5);
  \draw(-0.5,0.5)--(0.5,0.5);
  \draw(-0.5,1.5)--(0.5,1.5);
  \draw[dashed](-0.75,-0.9)--(0.75,-0.5);
  \draw[dashed](-0.75,-1.1)--(0.75,-0.9);
  \draw[draw=none, fill=blue, fill opacity=0.1](-0.75,-0.9)--(0.75,-0.5)--(0.75,1.75)--(-0.75,1.75)--cycle;
  \draw[draw=none, fill=red, fill opacity=0.25](-0.75,-1.1)--(0.75,-0.9)--(0.75,-1.75)--(-0.75,-1.75)--cycle;
  \node at (-0.5,-0.5) [circle,fill,inner sep=1pt]{};
  \node at (-0.3,-0.3){$P_1$};
  \node at (0.5,-0.5) [circle,fill,inner sep=1pt]{};
  \node at (0.3,-0.3){$P_2$};
  \node at (-0.5,-1.5) [circle,fill,inner sep=1pt]{};
  \node at (-0.3,-1.3){$P_3$};
  \node at (0.5,-1.5) [circle,fill,inner sep=1pt]{};
  \node at (0.3,-1.3){$P_4$};
  \end{tikzpicture}
\end{minipage}
\hspace{-5mm}
\begin{minipage}{.2\textwidth}
\begin{tikzpicture}[scale=1.4]
  \draw(-0.5,-1.5)--(-0.5,1.5);
  \draw(0.5,-1.5)--(0.5,1.5);
  \draw(-0.5,-1.5)--(0.5,-1.5);
  \draw(-0.5,-0.5)--(0.5,-0.5);
  \draw(-0.5,0.5)--(0.5,0.5);
  \draw(-0.5,1.5)--(0.5,1.5);
  \draw[dashed](-0.75,-1.3)--(0.75,-0.1);
  \draw[dashed](-0.75,-1.4)--(0.75,-1.3);
  \draw[draw=none, fill=blue, fill opacity=0.1](-0.75,-1.3)--(0.75,-0.1)--(0.75,1.75)--(-0.75,1.75)--cycle;
  \draw[draw=none, fill=red, fill opacity=0.25](-0.75,-1.4)--(0.75,-1.3)--(0.75,-1.75)--(-0.75,-1.75)--cycle;
  \node at (-0.5,-0.5) [circle,fill,inner sep=1pt]{};
  \node at (0.5,-0.5) [circle,fill,inner sep=1pt]{};
  \node at (-0.5,-1.5) [circle,fill,inner sep=1pt]{};
  \node at (0.5,-1.5) [circle,fill,inner sep=1pt]{};
  \draw[line width=0.3mm](-0.5,-0.5)--(0.25, -0.5);
  \draw[line width=0.3mm](-0.5,-0.5)--(-0.5,0.7);
  \draw[line width=0.3mm](-0.5,0.7)--(0.25,0.7);
  \draw[line width=0.3mm](0.25, -0.5)--(0.25,0.7);
  \node at (-0.125,0.1){$R$};
  \end{tikzpicture}
\end{minipage}
\hspace{-5mm}
\begin{minipage}{.2\textwidth}
\begin{tikzpicture}[scale=1.4]
  \draw(-0.5,-1.5)--(-0.5,1.5);
  \draw(0.5,-1.5)--(0.5,1.5);
  \draw(-0.5,-1.5)--(0.5,-1.5);
  \draw(-0.5,-0.5)--(0.5,-0.5);
  \draw(-0.5,0.5)--(0.5,0.5);
  \draw(-0.5,1.5)--(0.5,1.5);
  \draw[dashed](-0.75,0.1)--(0.75,0.5);
  \draw[dashed](-0.75,-0.3)--(0.75,-0.1);
  \draw[draw=none, fill=blue, fill opacity=0.1](-0.75,0.1)--(0.75,0.5)--(0.75,1.75)--(-0.75,1.75)--cycle;
  \draw[draw=none, fill=red, fill opacity=0.25](-0.75,-0.3)--(0.75,-0.1)--(0.75,-1.75)--(-0.75,-1.75)--cycle;
  \node at (-0.5,0.5) [circle,fill,inner sep=1pt]{};
  \node at (-0.3,0.7){$P_1$};
  \node at (0.5,0.5) [circle,fill,inner sep=1pt]{};
  \node at (0.3,0.7){$P_2$};
  \node at (-0.5,-0.5) [circle,fill,inner sep=1pt]{};
  \node at (-0.3,-0.7){$P_3$};
  \node at (0.5,-0.5) [circle,fill,inner sep=1pt]{};
  \node at (0.3,-0.7){$P_4$};
  \end{tikzpicture}
\end{minipage}
\hspace{-5mm}
\begin{minipage}{.2\textwidth}
\begin{tikzpicture}[scale=1.4]
  \draw(-0.5,-1.5)--(-0.5,1.5);
  \draw(0.5,-1.5)--(0.5,1.5);
  \draw(-0.5,-1.5)--(0.5,-1.5);
  \draw(-0.5,-0.5)--(0.5,-0.5);
  \draw(-0.5,0.5)--(0.5,0.5);
  \draw(-0.5,1.5)--(0.5,1.5);
  \draw[dashed](-0.75,0.1)--(0.75,0.3);
  \draw[dashed](-0.75,-0.1)--(0.75,-0.7);
  \draw[draw=none, fill=blue, fill opacity=0.1](-0.75,0.1)--(0.75,0.3)--(0.75,1.75)--(-0.75,1.75)--cycle;
  \draw[draw=none, fill=red, fill opacity=0.25](-0.75,-0.1)--(0.75,-0.7)--(0.75,-1.75)--(-0.75,-1.75)--cycle;
  \node at (-0.5,0.5) [circle,fill,inner sep=1pt]{};
  \node at (0.5,0.5) [circle,fill,inner sep=1pt]{};
  \node at (-0.5,-0.5) [circle,fill,inner sep=1pt]{};
  \node at (0.5,-0.5) [circle,fill,inner sep=1pt]{};
  \draw[line width=0.3mm](-0.5,-0.8)--(0.25, -0.8);
  \draw[line width=0.3mm](-0.5,-0.8)--(-0.5,-0.5);
  \draw[line width=0.3mm](-0.5,-0.5)--(0.25,-0.5);
  \draw[line width=0.3mm](0.25, -0.8)--(0.25,-0.5);
  \node at (-0.125,-0.65){$R$};
  \end{tikzpicture}
\end{minipage}
\hspace{-5mm}
\begin{minipage}{.2\textwidth}
\begin{tikzpicture}[scale=1.4]
  \draw(-0.5,-1.5)--(-0.5,1.5);
  \draw(0.5,-1.5)--(0.5,1.5);
  \draw(-0.5,-1.5)--(0.5,-1.5);
  \draw(-0.5,-0.5)--(0.5,-0.5);
  \draw(-0.5,0.5)--(0.5,0.5);
  \draw(-0.5,1.5)--(0.5,1.5);
  \draw[dashed](-0.75,-0.6)--(0.75,0.9);
  \draw[dashed](-0.75,-0.9)--(0.75,0.6);
  \draw[draw=none, fill=blue, fill opacity=0.1](-0.75,-0.6)--(0.75,0.9)--(0.75,1.75)--(-0.75,1.75)--cycle;
  \draw[draw=none, fill=red, fill opacity=0.25](-0.75,-0.9)--(0.75,0.6)--(0.75,-1.75)--(-0.75,-1.75)--cycle;
  \node at (-0.5,0.5) [circle,fill,inner sep=1pt]{};
  \node at (0.5,0.5) [circle,fill,inner sep=1pt]{};
  \node at (-0.5,-0.5) [circle,fill,inner sep=1pt]{};
  \node at (0.5,-0.5) [circle,fill,inner sep=1pt]{};
  \draw[line width=0.3mm](-0.5,0.5)--(0.35, 0.5);
  \draw[line width=0.3mm](0.35, 0.5)--(0.35,1.35);
  \draw[line width=0.3mm](0.35,1.35)--(-0.5,1.35);
  \draw[line width=0.3mm](-0.5,1.35)--(-0.5,0.5);
  \node at (-0.125,0.925){$R_+$};
  \draw[line width=0.3mm](-0.35,-0.5)--(0.5,-0.5);
  \draw[line width=0.3mm](0.5,-0.5)--(0.5,-1.35);
  \draw[line width=0.3mm](0.5,-1.35)--(-0.35,-1.35);
  \draw[line width=0.3mm](-0.35,-1.35)--(-0.35,-0.5);
  \node at(0.125,-0.925){$R_-$};
  \end{tikzpicture}
\end{minipage}
\end{center}
\caption{Cases $6$, $7$, $8$, $9$ and $10$}
\label{node e}
\end{figure}
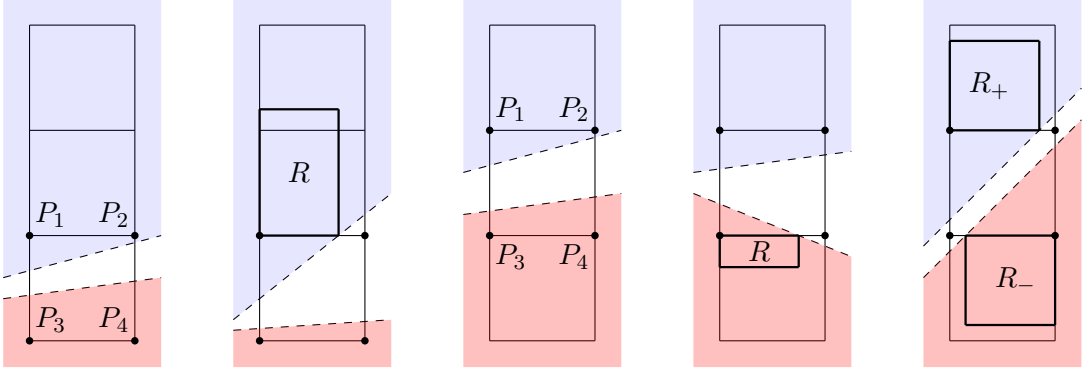

\noindent
{\bf Case 6:} If $T$ is the bottom cell and $P_1,P_2\in \cC_+$, then the two other cells are included in $\cC_+$, so
\[
|U\cap \cC|\geq 2h^2+|T\cap \cC|\geq 3|T\cap \cC| \geq 6\min(|T\cap \cC_+|,|T\cap \cC_-|).
\]
\nl
{\bf Case 7:} If $T$ is the bottom cell and $P_1\in \cC_+$ but $P_2\notin \cC_+$, $T\cap \cC_+$ is a triangle of width and height at most $h$, so there is a rectangle $R\subset (U\setminus T)\cap \cC_+$ of same width and twice as high, and thus
\[
|U\cap \cC|\geq |R|+|T\cap \cC| = 4|T\cap \cC_+|+|T\cap \cC|\geq 6\min(|T\cap \cC_+|,|T\cap \cC_-|).
\]

The same argument holds when $P_2\in \cC_+$ but $P_1\notin \cC_+$, and we necessarily have $P_1$ or $P_2$ in $\cC_+$ since $T\cap \cC_+\neq\emptyset$. If $T$ is the top cell, applying a symmetry with respect to the horizontal axis and exchanging $\cC_+$ with $\cC_-$ brings us back to Cases 6 and 7. 
\nl
\nl
{\bf Case 8:} If $T$ is the central cell,  $P_1,P_2\in \cC_+$ and  $P_3,P_4\in \cC_-$ the two other cells are included in $\cC_+$ and $\cC_-$, and we conclude as in Case 6.
\nl
\nl
{\bf Case 9:} If $T$ is the central cell,  $P_1,P_2\in \cC_+$,  $P_3\in \cC_-$ but $P_4\notin \cC_-$, the top cell is included in $\cC_+$, and there is a rectangle  $R\subset \cC_-$ of same width and height as $T\cap \cC_-$ in the bottom cell, so
\[
|U\cap \cC| \geq h^2 + |T\cap \cC|+ |R| \geq 2 |T\cap \cC|+ 2|T\cap \cC_-|\geq 6\min(|T\cap \cC_+|,|T\cap \cC_-|).
\]
The same situation occurs when only three points among $P_1,\dots,P_4$ are in $\cC$.
\nl
\nl
{\bf Case 10:} If $T$ is the central cell, only one vertex among $P_1$, $P_2$ is in $\cC_+$, and only one among $P_3$, $P_4$ is in $\cC_-$, both $T\cap \cC_+$ and $T\cap \cC_-$ are triangles, and there exist rectangles $R_+$ and $R_-$ of same widths and heights, so
\[
|U\cap \cC| \geq |R_+|+|T\cap \cC|+|R_-|\geq 3|T\cap \cC_+|+3|T\cap \cC_-|\geq 6\min(|T\cap \cC_+|,|T\cap \cC_-|).
\]

As $\cC_+$ and $\cC_-$ each contain at least one corner of $T$, we treated all cases for Node e.
\nl
\nl
{\bf Node f:} Finally, we consider the situation where there is an intersection point $X\in \Delta_u\cap\Delta_v$ in $U$, and therefore in $T$. We again match $5$ cases, illustrated in Figure \ref{node f}, depending on the position of $T$ in $U$, and of its corners with respect to $\cC$.

\begin{figure}[ht]
\begin{center}
\begin{minipage}{.2\textwidth}
\begin{tikzpicture}[scale=1.4]
  \draw(-0.5,-1.5)--(-0.5,1.5);
  \draw(0.5,-1.5)--(0.5,1.5);
  \draw(-0.5,-1.5)--(0.5,-1.5);
  \draw(-0.5,-0.5)--(0.5,-0.5);
  \draw(-0.5,0.5)--(0.5,0.5);
  \draw(-0.5,1.5)--(0.5,1.5);
  \draw[dashed](-0.75,-0.25)--(0.75,-1.6);
  \draw[dashed](-0.225,-1.75)--(0.75,0.2);
  \draw[draw=none, fill=blue, fill opacity=0.1](-0.75,-0.25)--(0.75-1.8/2.9,0.2-3.6/2.9)--(0.75,0.2)--(0.75,1.75)--(-0.75,1.75)--cycle;
  \draw[draw=none, fill=red, fill opacity=0.25](-0.225,-1.75)--(0.75-1.8/2.9,0.2-3.6/2.9)--(0.75,-1.6)--(0.75,-1.75)--cycle;
  \node at (-0.5,-0.5) [circle,fill,inner sep=1pt]{};
  \node at (0.5,-0.5) [circle,fill,inner sep=1pt]{};
  \node at (-0.5,-1.5) [circle,fill,inner sep=1pt]{};
  \node at (0.5,-1.5) [circle,fill,inner sep=1pt]{};
  \node at (0.75-1.8/2.9,0.2-3.6/2.9) [circle,fill,inner sep=1pt]{};
  \draw[line width=0.3mm](-0.75+2.5/9,-0.5)--(0.4, -0.5);
  \draw[line width=0.3mm](-0.75+2.5/9,-0.5)--(-0.75+2.5/9, 0.5827);
  \draw[line width=0.3mm](-0.75+2.5/9, 0.5827)--(0.4, 0.5827);
  \draw[line width=0.3mm](0.4, 0.5827)--(0.4, -0.5);
   \node at (-0.05,0.04){$R$};
   \node at (-0.1,-1.1){$X$};
  \end{tikzpicture}
\end{minipage}
\hspace{-7.5mm}
\begin{minipage}{.2\textwidth}
\begin{tikzpicture}[scale=1.4]
  \draw(-0.5,-1.5)--(-0.5,1.5);
  \draw(0.5,-1.5)--(0.5,1.5);
  \draw(-0.5,-1.5)--(0.5,-1.5);
  \draw(-0.5,-0.5)--(0.5,-0.5);
  \draw(-0.5,0.5)--(0.5,0.5);
  \draw(-0.5,1.5)--(0.5,1.5);
  \draw[dashed](-0.75, 1)--(0.75,-1.25);
  \draw[dashed](-0.75, -0.4)--(0.75,-0.025);
  \draw[draw=none, fill=blue, fill opacity=0.1](-0.75, 1)--(0.05,-0.2)--(0.75,-0.025)--(0.75,1.75)--(-0.75,1.75)--cycle;
  \draw[draw=none, fill=red, fill opacity=0.25](-0.75, -0.4)--(0.05,-0.2)--(0.75,-1.25)--(0.75,-1.75)--(-0.75,-1.75)--cycle;
  \node at (-0.5,-0.5) [circle,fill,inner sep=1pt]{};
  \node at (0.5,-0.5) [circle,fill,inner sep=1pt]{};
  \node at (-0.5,0.5) [circle,fill,inner sep=1pt]{};
  \node at (0.5,0.5) [circle,fill,inner sep=1pt]{};
  \node at (0.05,-0.2) [circle,fill,inner sep=1pt]{};
  \draw[dotted, line width=0.2mm](-0.75, -0.2)--(0.75,-0.2);
  \node at (-0.625, 0){$H$};
  \draw[line width=0.3mm](0.05,-0.2)--(-0.75+1/3, 1.5);
  \draw[line width=0.3mm](-0.75+1/3, 1.5)--(0.5,1.5);
  \draw[line width=0.3mm](0.05,-0.2)--(0.5,-0.2+0.1125*17/7);
  \draw[line width=0.3mm](0.5,-0.2+0.1125*17/7)--(0.5,1.5);
  \draw[line width=0.3mm](0.05,-0.2)--(-0.5,-0.2-0.1375*13/3);
  \draw[line width=0.3mm](0.05,-0.2)--(0.25,-1.5);
  \draw[line width=0.3mm](-0.5,-1.5)--(0.25,-1.5);
  \draw[line width=0.3mm](-0.5,-1.5)--(-0.5,-0.2-0.1375*13/3);
  \end{tikzpicture}
\end{minipage}
\hspace{-5mm}
\begin{minipage}{.2\textwidth}
\begin{tikzpicture}[scale=1.4]
  \draw(-0.5,-1.5)--(-0.5,1.5);
  \draw(0.5,-1.5)--(0.5,1.5);
  \draw(-0.5,-1.5)--(0.5,-1.5);
  \draw(-0.5,-0.5)--(0.5,-0.5);
  \draw(-0.5,0.5)--(0.5,0.5);
  \draw(-0.5,1.5)--(0.5,1.5);
  \draw[dashed](-0.75,0.55)--(0.75,-0.45);
  \draw[dashed](-0.75,0.25)--(0.75,-0.15);
  \draw[draw=none, fill=blue, fill opacity=0.1](-0.75,0.55)--(0,0.05)--(0.75,-0.15)--(0.75,1.75)--(-0.75,1.75)--cycle;
  \draw[draw=none, fill=red, fill opacity=0.25](-0.75,0.25)--(0,0.05)--(0.75,-0.45)--(0.75,-1.75)--(-0.75,-1.75)--cycle;
  \node at (-0.5,0.5) [circle,fill,inner sep=1pt]{};
  \node at (-0.3,0.7){$P_1$};
  \node at (0.5,0.5) [circle,fill,inner sep=1pt]{};
  \node at (0.3,0.7){$P_2$};
  \node at (-0.5,-0.5) [circle,fill,inner sep=1pt]{};
  \node at (-0.3,-0.7){$P_3$};
  \node at (0.5,-0.5) [circle,fill,inner sep=1pt]{};
  \node at (0.3,-0.7){$P_4$};
  \node at (0,0.05) [circle,fill,inner sep=1pt]{};
  \end{tikzpicture}
\end{minipage}
\hspace{-7.5mm}
\begin{minipage}{.2\textwidth}
\begin{tikzpicture}[scale=1.4]
  \draw(-0.5,-1.5)--(-0.5,1.5);
  \draw(0.5,-1.5)--(0.5,1.5);
  \draw(-0.5,-1.5)--(0.5,-1.5);
  \draw(-0.5,-0.5)--(0.5,-0.5);
  \draw(-0.5,0.5)--(0.5,0.5);
  \draw(-0.5,1.5)--(0.5,1.5);
  \draw[dashed](-0.75,0.1)--(0.75,-0.05);
  \draw[dashed](-0.75,0.7)--(0.75,-1.1);
  \draw[draw=none, fill=blue, fill opacity=0.1](-0.75,0.7)--(-0.75+6/11,0.5/11)--(0.75,-0.05)--(0.75,1.75)--(-0.75,1.75)--cycle;
  \draw[draw=none, fill=red, fill opacity=0.25](-0.75,0.1)--(-0.75+6/11,0.5/11)--(0.75,-1.1)--(0.75,-1.75)--(-0.75,-1.75)--cycle;
  \node at (-0.5,0.5) [circle,fill,inner sep=1pt]{};
  \node at (0.5,0.5) [circle,fill,inner sep=1pt]{};
  \node at (-0.5,-0.5) [circle,fill,inner sep=1pt]{};
  \node at (0.5,-0.5) [circle,fill,inner sep=1pt]{};
  \node at (-0.75+6/11,0.5/11) [circle,fill,inner sep=1pt]{};
  \draw[line width=0.3mm](-0.5,-1.4)--(0.25, -1.4);
  \draw[line width=0.3mm](-0.5,-1.4)--(-0.5,-0.5);
  \draw[line width=0.3mm](-0.5,-0.5)--(0.25,-0.5);
  \draw[line width=0.3mm](0.25, -1.4)--(0.25,-0.5);
  \node at (-0.125,-0.95){$R$};
  \node at (-0.5,0.4) [circle,fill,inner sep=1pt]{};
  \node at (-0.61,0.25){$Y$};
  \node at (0.25,-0.5) [circle,fill,inner sep=1pt]{};
  \node at (0.35,-0.3){$Z$};
  \node at (-0.3,-0.3){$P_3$};
  \end{tikzpicture}
\end{minipage}
\hspace{-5.5mm}
\begin{minipage}{.2\textwidth}
\begin{tikzpicture}[scale=1.4]
  \draw(-0.5,-1.5)--(-0.5,1.5);
  \draw(0.5,-1.5)--(0.5,1.5);
  \draw(-0.5,-1.5)--(0.5,-1.5);
  \draw(-0.5,-0.5)--(0.5,-0.5);
  \draw(-0.5,0.5)--(0.5,0.5);
  \draw(-0.5,1.5)--(0.5,1.5);
  \draw[dashed](-0.75,1.6)--(0.75,-1.7);
  \draw[dashed](-0.75,0.2)--(0.75,-0.4);
  \draw[draw=none, fill=blue, fill opacity=0.1](-0.75,1.6)--(-0.75+14/18,0.2-0.4*14/18)--(0.75,-0.4)--(0.75,1.75)--(-0.75,1.75)--cycle;
  \draw[draw=none, fill=red, fill opacity=0.25](-0.75,0.2)--(-0.75+14/18,0.2-0.4*14/18)--(0.75,-1.7)--(0.75,-1.75)--(-0.75,-1.75)--cycle;
  \node at (-0.5,0.5) [circle,fill,inner sep=1pt]{};
  \node at (0.5,0.5) [circle,fill,inner sep=1pt]{};
  \node at (-0.5,-0.5) [circle,fill,inner sep=1pt]{};
  \node at (0.5,-0.5) [circle,fill,inner sep=1pt]{};
  \draw[dotted, line width=0.2mm](-0.75, 0.2-0.4*14/18)--(0.75,0.2-0.4*14/18);
  \draw[dotted, line width=0.2mm](-0.75+14/18, -1.75)--(-0.75+14/18,1.75);
  \node at (-0.75+14/18,0.2-0.4*14/18) [circle,fill,inner sep=1pt]{};
  \draw (-0.75+14/18,0.6-0.4*14/18) arc (90:114.44:0.4);
  \node at (0.1,0.6-0.4*14/18){\scriptsize $\phi$};
  \draw (-0.35+14/18,0.2-0.4*14/18) arc (360:338.2:0.4);
  \node at (0.35,0){\scriptsize $\psi$};
  \node at (-0.25,-0.2){\scriptsize $l$};
  \node at (-0.05,-0.3){\scriptsize $z$};
  \end{tikzpicture}
\end{minipage}
\end{center}
\caption{Cases $11$, $12$, $13$, $14$ and $15$}
\label{node f}
\end{figure}
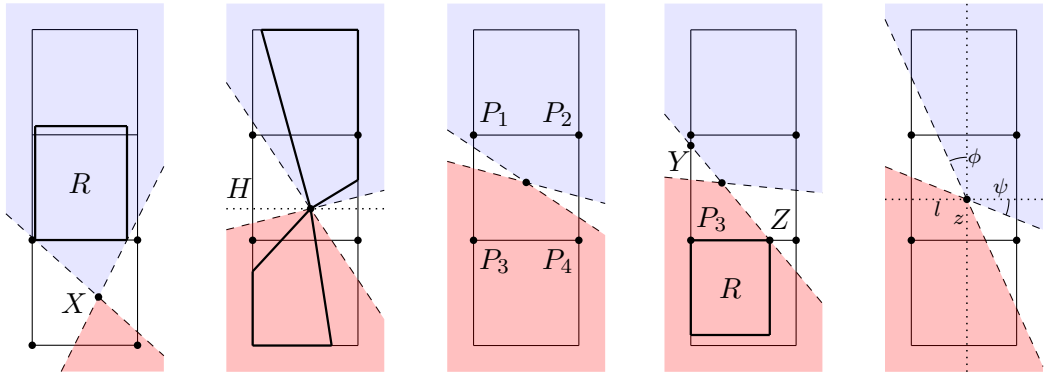

\noindent
{\bf Case 11:} If $T$ is the bottom cell, as $\Delta_u$ and $\Delta_v$ pass through the central cell of $S$, $U$ is included in the central column of $S$, and no corner of $T$ can be in $\mathring \cC_+$, since otherwise $\Delta$ would have to pass through that corner, according to the definition of the columns. As a consequence, $\Delta_u$ and $\Delta_v$ necessarily pass through the central cell of $U$, so $T\cap \cC_+$ is a triangle, and we proceed as in Case 7. The same happens if $T$ is the top cell, so in the rest of the proof we only consider situations where $T$ is the central cell.
\nl
\nl
{\bf Case 12:} If the horizontal line $H$ passing through $X$ does not intersect $\cC$ at any other point, $\cC_+$ is entirely above $H$ and $\cC_-$ entirely below. Denoting $z=X_2-\o x_2+\frac{h}{2}\in \left(0, h\right)$, the vertical dilation with respect to $H$ by a factor $\frac{2h-z}{h-z}$ sends $T\cap \cC_+$ in $U\cap \cC_+$, and the vertical dilation with respect to $H$ by a factor $\frac{h+z}{z}$ sends $T\cap \cC_-$ in $U\cap \cC_-$, so
\[
|U\cap \cC|\geq \frac{2h-z}{h-z}|T\cap \cC_+|+\frac{h+z}{z}|T\cap \cC_-|\geq 6\min(|T\cap \cC_+|,|T\cap \cC_-|)
\]
because $\frac{2h-z}{h-z}+\frac{h+z}{z}=2+\frac{h^2}{z(h-z)}\geq 6$ for $z\in (0,h)$. 

In the remaining cases, up to a symmetry with respect to the vertical axis, we can assume that $X+\R^2_+\subset \cC_+$ and $X+\R_-^2\subset \cC_-$, and in particular $P_2\in \cC_+$ and $P_3\in \cC_-$.
\nl
\nl
{\bf Case 13:} If $P_1\in \cC_+$ and $P_4\in \cC_-$, the situation is similar to Case 8.
\nl
\nl
{\bf Case 14:} If $P_1\in \cC_+$ and $P_4\notin \cC_-$, the top cell is included in $\cC_+$, and one of the lines $\Delta_u$ or $\Delta_v$ intersects the line segments $[P_1,P_3]$ and $[P_3,P_4]$ at points $Y$ and $Z$. Then the triangle $YP_3Z$ is included in $T$ and contains $T\cap \cC_-$, so there is a rectangle $R$ of same width and height in $(U\setminus T)\cap \cC_-$. In the end
\[
|U\cap \cC| \geq h^2+|T\cap \cC|+|R|\geq 2|T\cap \cC|+2|T\cap \cC_-|\geq 6\min(|T\cap \cC_+|,|T\cap \cC_-|).
\]
The same approach treats the symmetric case $P_1\notin \cC_+$ and $P_4\in \cC_-$,
\nl
\nl
{\bf Case 15:} Finally, if $P_1\notin \cC_+$ and $P_4\notin \cC_-$, denote $l=X_1-\o x_1+\frac{h}{2}\in (0,h)$, $z=X_2-\o x_2+\frac{h}{2}\in (0,h)$, $\phi\in (0,\frac\pi4)$ the angle between the vertical axis and the line among $\Delta_u$ and $\Delta_v$ that intersects $[P_1,P_2]$, and $\psi\in (0,\frac\pi4)$ the angle between the line among $\Delta_u$ and $\Delta_v$ that intersects $[P_1,P_3]$ and the horizontal axis. As $|\arg(\Delta)|\leq\frac\pi4$, $\phi\geq \psi$ so $\tan(\psi)\leq \tan(\phi)=:t \leq 1$.

We can now compute
\begin{align*}
|T\cap \cC_+|&=(h-l)(h-z)+\frac{1}{2}(h-l)^2\tan\psi+\frac{1}{2}(h-z)^2\tan\phi,\\
|T\cap \cC_-|&=lz+\frac{1}{2}l^2\tan\psi+\frac{1}{2}z^2\tan\phi,
\end{align*}
and
\[
|(U\setminus T)\cap \cC|\geq (h-l)h+(h-z)th + lh+ zth =(1+t)h^2.
\]
If $l+z\leq h$, we get
\[
|(U\setminus T)\cap \cC|\geq (1+t)(l+z)^2-(1-t)(l-z)^2= 4lz+2t(l^2+z^2)\geq 4|T\cap \cC_-|.
\]
Similarly, $l+z\geq h$ implies $|(U\setminus T)\cap \cC|\geq 4|T\cap \cC_+|$. In any case, we found
\[
|U\cap \cC|=|T\cap \cC|+|(U\setminus T)\cap \cC|\geq 6\min(|T\cap \cC_+|,|T\cap \cC_-|),
\]
which concludes the proof.

As a last remark, note that the constants $\alpha=h^{-2}$ and $\mu=\frac{3}{2}h^2$ in Proposition \ref{prop stencil}
are sharp, since equality is attained by functions of constant sign on each cell for $\alpha$, and by $w=u-v$ with $\arg(\vec n_u) \in  \frac\pi 4\mathbb Z$, $c_u=0$ and $v=u-1$ for $\mu$.

\bibliographystyle{plain}
\bibliography{references}

\end{document}